\newtheorem{theorem}{Theorem}[section]
\newtheorem{lemma}[theorem]{Lemma}
\newtheorem{proposition}[theorem]{Proposition}
\newtheorem{corollary}{Corollary}[theorem]
\theoremstyle{definition}
\newtheorem{definition}[theorem]{Definition}
\theoremstyle{remark}
\newtheorem{remark}[theorem]{Remark}
\newcommand \Z{\mathbb Z}
\newcommand \Q{\mathbb Q}
\newcommand \C{\mathbb C}
\newcommand \vp{\varphi}
\numberwithin{equation}{section}
\newcommand \mc{\mathcal}
\newcommand \mb{\mathbb}
\newcommand \mf{\mathfrak}
\DeclareMathOperator{\Hom}{Hom}
\DeclareMathOperator{\Pic}{Pic}
\DeclareMathOperator{\rank}{rank}
\DeclareMathOperator{\Span}{span}
\DeclareMathOperator{\Coh}{Coh}
\DeclareMathOperator{\ord}{ord}
\DeclareMathOperator{\Ext}{Ext}
\title{Moduli Spaces of Stably Irreducible Sheaves on Kodaira Surfaces}
\author{Eric Boulter\footnote{Department of Pure Mathematics, University of Waterloo, Waterloo, Canada.  Email: etboulte@uwaterloo.ca}}
\begin{document}
	
\maketitle

\begin{abstract}
 Moduli spaces of stably irreducible sheaves on Kodaira surfaces belong to the short list of examples of smooth and compact holomorphic symplectic manifolds, and it is not yet known how they fit into the classification of holomorphic symplectic manifolds by deformation type. This paper studies a natural Lagrangian fibration on these moduli spaces to determine that they are not K\"ahler or simply connected, ruling out most of the known deformation types of holomorphic symplectic manifolds.
\end{abstract}

\tableofcontents

\section{Introduction}
The study of holomorphic symplectic manifolds began with Bogomolov's classification of compact K\"ahler manifolds with trivial canonical bundle. These manifolds decompose up to finite \'etale cover as a product of a complex torus, irreducible Calabi-Yau manifolds, and irreducible holomorphic symplectic manifolds \cite{BeauSurvey,Bog}. It is generally very difficult to construct compact examples of holomorphic symplectic manifolds; nearly all constructions make use of the fact that the Hilbert scheme (or Douady space) of points over a holomorphic symplectic surface is holomorphic symplectic \cite{Beau}, as is a smooth and compact moduli space of stable sheaves with fixed Chern character on a hyperK\"ahler surface \cite{Mukai}.

By the Enriques-Kodaira classification, all compact holomorphic symplectic surfaces are complex tori, K3 surfaces, or primary Kodaira surfaces. Each of these holomorphic symplectic surfaces generates an infinite family of holomorphic symplectic manifolds via its Hilbert schemes (or Douady spaces) of points \cite{Beau}. These give rise to generalized Kummer varieties in the case of complex tori, and Bogomolov-Gaun manifolds in the case of primary Kodaira surfaces \cite{BogGuan,Guan}. For K3 surfaces and complex tori it has been shown that the moduli spaces of stable sheaves with fixed Chern character are deformation equivalent to the product of a Hilbert scheme of points with the Picard group of the surface \cite{O'GradyHodgeSt,Yoshioka} whenever they are smooth and compact. It is an open question whether this result also holds for primary Kodaira surfaces.

In the case of primary Kodaira surfaces, Toma showed that the moduli space of stable sheaves with fixed determinant and Chern character is holomorphic symplectic whenever it is smooth and compact \cite{TomaCpt}, and determined that a sufficient condition to guarantee smoothness and compactness of the moduli space is to take a Chern character corresponding to stably irreducible sheaves. Aprodu, Moraru, and Toma studied the two-dimensional moduli spaces of rank-2 stably irreducible sheaves over primary Kodaira surfaces, and determined that they are also primary Kodaira surfaces \cite{ApMorTom}. In higher dimensions, it is not yet known whether these moduli spaces are always deformation equivalent to Douady spaces of points over primary Kodaira surfaces.

In this paper, we determine that there are compact moduli spaces of stably irreducible sheaves on Kodaira surfaces of dimension $2n$ for every $n$. In addition, we show that these moduli spaces are non-K\"ahler and have no simply connected components. Douady spaces of points on Kodaira surfaces are the only other known examples of compact holomorphic symplectic manifolds with these properties. An interesting question is to determine whether these moduli spaces are deformation equivalent to Douady spaces of points on Kodaira surfaces or form a new class of examples. Towards answering this question, we analyse a natural fibration on these spaces, which is described in detail for dimensions 4 and 6 in section 5.

Consider a general compact complex surface $X$ with Gauduchon metric $g$, and consider the moduli space $\mc{M}_{r,\delta,c_2}^g(X)$ of $g$-stable coherent sheaves with rank $r$, determinant $\delta$, and second Chern class $c_2$ on X. In his paper \cite{TomaCpt}, Toma gives a sufficient condition for this moduli space to be smooth and compact: \begin{align*}&\text{Every }g\text{-semi-stable vector bundle } E \text{ with }\\ &\rank(E)=r,c_1(E)=c_1(\delta), c_2(E)\leq c_2&& (*)\\ &\text{ is }g\text{-stable.}\end{align*} 
When $X$ has odd first Betti number, this criterion is equivalent to requiring that every bundle $E$ with rank $r$, $c_1(E)=c_1(\delta)$ and $c_2(E)\leq c_2$ is irreducible. In this case, the compactification of the moduli space of stable bundles with rank $r$, determinant $\delta$, and second Chern class $c_2$ is isomorphic to the moduli space of stably irreducible torsion-free sheaves. Using Br\^inz\u anescu's sufficient conditions for a sheaf to be irreducible \cite{Brin}, we find a range of invariants for which ($*$) is satisfied when $X$ is a primary Kodaira surface. In particular, we show that the moduli spaces of rank-two sheaves which are smooth and compact can be of any even dimension.

In section 3, we review the theory of spectral curves on primary Kodaira surfaces, as discussed in \cite{BrMor,BrMor2,BrMoFM}, which classifies bundles over a Kodaira surface based on their restrictions to the fibres of the natural bundle map associated to the Kodaira surface. We then construct the space $\mb{P}_{\delta,c_2}$ of spectral curves for sheaves in $\mc{M}_{2,\delta,c_2}$. We also decompose $\mb{P}_{\delta,c_2}$ into a filtration based on the number of ``jumps" each spectral curve has. The spectral curves give a finer invariant for torsion-free sheaves than the Chern character and determinant, and in the rank-two case, the graph map $\mc{M}_{2,\delta,c_2} \to \mb{P}_{\delta,c_2}$ sending each sheaf to its spectral curve is an algebraic completely integrable system \cite{BrMor}. The fibres of this integrable system are the focus of sections 4 and 5. We show that $\mb{P}_{\delta,c_2}$ is a $\mb{P}^n$-bundle and give an explicit description of the $\mb{P}^n$-bundle structure for each choice of $\delta$ and $c_2$. In particular, we show that for $\Delta(2,\delta,c_2)\geq \frac{1}{2}$ $\mb{P}_{\delta,c_2}$ is never biholomorphic to the base $\mathrm{Sym}^n(B)$ of the natural Lagrangian fibration on any Douady space $X^{[n]}$ over a Kodaira surface. Finally, we review properties of elementary modifications of a rank-2 vector bundle, and use elementary modifications to prove that every irreducible component of a spectral curve in $\mb{P}_{\delta, c_2}$ is smooth when $(2,\delta,c_2)$ correspond to stably irreducible sheaves.

In section \ref{Fibres}, we review the construction of Br\^inz\u anescu and Moraru \cite{BrMoFM} of the fibres of the graph map above spectral curves without jumps, and describe the fibres of the graph map above spectral curves with exactly one jump. Since spectral curves with $k$ jumps can only occur when the moduli space has dimension at least $4k$, understanding these cases allows us to describe all of the fibres of the graph map when the dimension of the moduli space is less than 8. In order to look at the fibres above spectral curves with one jump, we use elementary modifications to parameterize the locally free sheaves, and the structure of the multiplicity one Quot scheme to parameterize the non-locally free sheaves. We also determine which non-locally free sheaves can occur as limit points of vector bundles in the same fibre.

In section \ref{Applications}, we use results from section \ref{Fibres} to prove the main result of the paper: 
\begin{theorem}
	Let $X$ be a primary Kodaira surface, and let $(\delta,c_2) \in \Pic(X)\times \Z$ be such that $\mc{M}_{2,\delta,c_2}$ has positive dimension and contains stably irreducible vector bundles. Then $\mc{M}_{2,\delta,c_2}(X)$ is a non-K\"ahler manifold with no simply connected components.
\end{theorem}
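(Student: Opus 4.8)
The plan is to read off the topology of $\mc{M}:=\mc{M}_{2,\delta,c_2}(X)$ from the algebraic completely integrable system $h\colon \mc{M}\to\mb{P}_{\delta,c_2}$ of Section 3. By hypothesis the triple $(2,\delta,c_2)$ satisfies Toma's condition $(*)$, so $\mc{M}$ is a smooth, compact, holomorphic symplectic manifold \cite{TomaCpt} of dimension $2n$ with $n\geq 1$ and $K_{\mc{M}}$ trivial; and $h$ is proper and surjective with $n$-dimensional connected fibres — compactified Jacobians of the spectral curves, whose components are smooth by the result of Section 3 — over a base $\mb{P}_{\delta,c_2}$ which, by the projective-bundle description of Section 3, is a projective bundle over an elliptic curve $B$. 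In particular $\mc{M}$ is connected (a proper surjection with connected fibres over a connected base), $\pi_1(\mb{P}_{\delta,c_2})\cong\pi_1(B)\cong\Z^2$, and $b_1(\mb{P}_{\delta,c_2})=2$.

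To see that $\mc{M}$ is not K\"ahler I would show that $b_1(\mc{M})$ is odd, which suffices since a compact K\"ahler manifold has even first Betti number. Computing $b_1(\mc{M})$ from the Leray spectral sequence of $h$, the exact sequence of low-degree terms
\[ 0\to H^1(\mb{P}_{\delta,c_2},\Q)\to H^1(\mc{M},\Q)\to H^0(\mb{P}_{\delta,c_2},R^1h_*\Q)\xrightarrow{\,d_2\,}H^2(\mb{P}_{\delta,c_2},\Q) \]
gives $b_1(\mc{M})=2+\dim\ker(d_2)$. Over the open locus of reduced spectral curves $h$ is a smooth fibration in $n$-dimensional tori, so there $R^1h_*\Q$ is a rank-$2n$ local system whose monodromy factors through $\pi_1(B)\cong\Z^2$; using the explicit fibres and their degenerations over the jump strata from Sections 4 and 5, I would identify this monodromy — it is assembled from the monodromy of the elliptic fibre bundle $X\to B$ acting on the Jacobians of bisections — and check that its invariants form a one-dimensional space killed by $d_2$, so that $b_1(\mc{M})=3=b_1(X)$, as in the cases of K3 and abelian surfaces with fixed determinant. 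For $n=1$ this recovers the first Betti number of a primary Kodaira surface, matching \cite{ApMorTom}. (One also expects the inequality $b_1(\mc{M})\geq 3$ independently from the Künneth component in $H^3(X,\Q)\otimes H^1(\mc{M},\Q)$ of the Chern character of a quasi-universal sheaf on $X\times\mc{M}$, which should embed $H^3(X,\Q)\cong\Q^3$ into $H^1(\mc{M},\Q)$.) Since $\mc{M}$ is connected, it then has no K\"ahler component either.

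For the fundamental group, $b_1(\mc{M})=3>0$ already forces $H_1(\mc{M},\Q)\neq 0$, hence $\pi_1(\mc{M})$ infinite and $\mc{M}$ with no simply connected component. More structurally, the spectral construction furnishes a distinguished section of $h$ over the dense open locus of reduced spectral curves (the ``zero section'' of the abelian fibration, sending a spectral curve to the rank-two sheaf corresponding to its structure sheaf), and $h$ is in any case proper and surjective with connected fibres; either observation shows $h_*\colon\pi_1(\mc{M})\twoheadrightarrow\pi_1(\mb{P}_{\delta,c_2})\cong\Z^2$, so $\pi_1(\mc{M})$ surjects onto $\Z^2$.

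The hard part is the computation of $H^1(\mc{M},\Q)$ — equivalently, determining the monodromy of $R^1h_*\Q$ and showing $d_2$ vanishes on its invariants. This is exactly what the fibre analysis of Sections 4 and 5 is for: one must control how the fibres of $h$ degenerate across the strata of $\mb{P}_{\delta,c_2}$ indexed by the number of jumps of the spectral curve, how the non-locally-free sheaves sit at the boundary of each fibre, and how the vanishing cycles of these degenerations interact with the monodromy of $X\to B$. Everything else — Toma's smoothness and compactness criterion, the projective-bundle structure of the base, properness of $h$, and connectedness of its fibres — is already in hand from the earlier sections, so the real work is confined to this local-system computation over the base.
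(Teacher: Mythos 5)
Your proposal is a programme rather than a proof: the non-K\"ahler half rests entirely on the claim that $b_1(\mc{M}_{2,\delta,c_2}(X))=3$, to be extracted from the monodromy of $R^1\mc{G}_*\Q$ and the vanishing of $d_2$ on its invariants, and you never carry out that computation --- you yourself flag it as ``the hard part.'' Worse, the inputs you would feed into it are not correct as stated. The fibres of the graph map are not compactified Jacobians but torsors under $\mathrm{Prym}(C/B)$ (the determinant is fixed, which also kills your ``zero section'': the pushforward of $\mc{O}_W$ does not have determinant $\delta$, so there is no distinguished point in the fibre). And the fibres are not all connected: already for $\Delta(2,\delta,c_2)=\tfrac12$, over a spectral curve $C'+\{b\}\times T^*$ with $C'$ of genus one the fibre maps onto $\mathrm{Prym}(C'/B)\times T$ with $\mathrm{Prym}(C'/B)$ the two-element group, so it has at least two components. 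This breaks your ``proper surjection with connected fibres'' arguments both for connectedness of $\mc{M}$ (which your step ``$b_1$ odd, and $\mc{M}$ connected, hence no K\"ahler component'' needs) and for the fallback surjectivity of $\pi_1(\mc{G})$; likewise ``$b_1>0$ hence no simply connected component'' needs an argument valid on every component, which you do not supply.

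The paper avoids all of this with a local submanifold argument. For $\Delta=\tfrac14$ it quotes Aprodu--Moraru--Toma ($\mc{M}$ is itself a primary Kodaira surface). For $\Delta\ge\tfrac12$ it takes a regular bundle $E\in\mc{M}_{2,\delta,c_2-1}(X)$ and uses $\mathrm{Quot}(E,1)\cong\mb{P}(E)$, the locus of non-locally-free sheaves with double dual $E$, as a compact complex submanifold of $\mc{M}_{2,\delta,c_2}(X)$: since $\mb{P}(E)\to X$ has simply connected fibres, $b_1(\mb{P}(E))=b_1(X)=3$ is odd, so $\mb{P}(E)$ is non-K\"ahler, and since complex submanifolds of K\"ahler manifolds are K\"ahler, neither is $\mc{M}_{2,\delta,c_2}(X)$; moreover $\mc{G}|_{\mathrm{Quot}(E,1)}$ fibres over the section $b\mapsto S_E+\{b\}\times T^*$ of $\mb{P}_{\delta,c_2}\to B$, so the homotopy exact sequence lifts every class of $\pi_1(\mb{P}_{\delta,c_2})\cong\Z^2$ and $\pi_1(\mc{G})$ is surjective. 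If you want to salvage your route you must actually determine the local system $R^1\mc{G}_*\Q$ and its $d_2$, including its behaviour over the jump strata where the fibres degenerate and disconnect --- a substantially harder task than the paper's argument, and one your proposal leaves open.
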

  In this section we also describe the fibration structure of moduli spaces with dimension at most 6 in more detail using the results from section \ref{Fibres}, as for these dimensions there are no spectral curves with more than one jump.
  
  The remainder of Section \ref{Applications} discusses comparisons between the moduli spaces $\mc{M}_{2,\delta,c_2}(X)$ and the Douady spaces $X^{[n]}$, as well as the graph map corresponding to moduli of stable rank-2 sheaves on an elliptically fibred abelian surface. Any moduli space of stable sheaves on an elliptically fibred abelian surface is birational to a Hilbert scheme of points, and the birational map can be constructed via allowable elementary modifications \cite[Chapter 8]{Fried}. A similar situation does not occur in the Kodaira surface case as the general bundle does not have allowable elementary modifications. We conclude with a discussion of possible avenues to reconcile this discrepancy, including an analysis of moduli spaces of vector bundles on a product of elliptic curves $C_1\times C_2$  where different choices of elliptic fibration structure give a description of the moduli space both in terms of a graph map and the birational map to $\Pic^0(C_1\times C_2)\times (C_1\times C_2)^{[n]}$.

\section{Preliminaries}\label{Prelim}
\subsection{Stably Irreducible Sheaves}
Let $X$ be a compact complex surface $X$ with Gauduchon metric $g$. Given invariants $r \in \Z^+, \delta \in \Pic(X)$, and $c_2 \in \Z$, we write $\mc{M}^g_{r,\delta,c_2}(X)$ as the moduli space of $g$-stable rank-$r$ torsion-free sheaves on $X$ with determinant $\delta$ and second Chern class $c_2$. In his paper \cite{TomaCpt}, Toma shows that a sufficient condition for the moduli space $\mc{M}^g_{r,\delta,c_2}(X)$ to be smooth and compact is 
\begin{align*}&\text{Every }g\text{-semi-stable vector bundle } E \text{ with }\\ &\rank(E)=r,c_1(E)=c_1(\delta), c_2(E)\leq c_2&& (*)\\ &\text{ is }g\text{-stable,}\end{align*} 
provided $\mc{M}^g_{r,\delta,c_2}(X)$ is non-empty.
\begin{definition}
	A coherent sheaf is \emph{reducible} if it contains a subsheaf of strictly lower positive rank. A sheaf $\mc{E} \in \Coh(X)$ is \emph{stably irreducible} if there is no reducible sheaf $\mc{F}\in \Coh(X)$ with $\text{ch}(\mc{E})=\text{ch}(\mc{F})$.
\end{definition}
In the case that the Betti number $b_1(X)$ is odd, the $(*)$ condition is equivalent to the condition that every vector bundle in $\mc{M}^g_{r,\delta,c_2}(X)$ is stably irreducible \cite{TomaCpt}. Since any sheaf which is not $g$-stable is automatically reducible, stably irreducible sheaves are stable for any choice of Gauduchon metric. Because of this, the choice of metric is irrelevant in these cases, so we write $\mc{M}_{r,\delta,c_2}(X)$ instead of $\mc{M}^g_{r,\delta,c_2}(X)$ when $b_1(X)$ is odd.

In order to determine conditions for the existence of stably irreducible bundles, we introduce the invariants $\Delta(r,c_1,c_2)$ and $t(r,c_1)$. The \emph{discriminant} $\Delta(r,c_1,c_2)$ is given by $$\Delta(r,c_1,c_2):=\frac{1}{r}\left(c_2-\frac{r-1}{2r}c_1^2\right).$$
For any sheaf $\mc{E}\in \Coh(X)$, we write $$\Delta(\mc{E}):=\Delta(\rank(\mc{E}), c_1(\mc{E}), c_2(\mc{E})).$$ If $X$ is not algebraic, then $\Delta(\mc{E})\geq 0$ \cite[Theorem 4.17]{Brin}. The $t$-invariant is defined as $$t(r,c_1):=-\frac{1}{2}\sup_{k\in \Z, 0<k<r} \left(\frac{1}{k(r-k)}\sup_{\alpha \in NS(X)}\left(\frac{kc_1}{r}-\alpha\right)^2\right)$$ \cite[Lemma 4.30]{Brin}. Since the intersection product is negative semi-definite on non-algebraic complex surfaces \cite[Corollary 2.9]{Brin}, $t(r,c_1)\geq 0$, and $t(r,c_1)=0$ whenever $c_1 \in m NS(X)$ for some positive integer $m$ with $\gcd(m,r)>1$. Again, we write $$t(\mc{E}):=t(\rank(\mc{E}), c_1(\mc{E})).$$

Any reducible sheaf $\mc{E}$ has $\Delta(\mc{E})\geq t(\mc{E})$ \cite[Lemma 4.30]{Brin}, so any $\mc{E}$ satisfying $$0\leq \Delta(\mc{E})<t(\mc{E})$$ is stably irreducible. When $X$ is a primary Kodaira surface, $\mc{M}_{r,\delta,c_2}(X)$ is non-empty of dimension $2r^2\Delta(r,\delta,c_2)$ for all $(r,\delta,c_2)$ with $\Delta(r,\delta,c_2)\geq 0$ \cite{TomaNote}, so in the range $$0\leq \Delta(r,\delta,c_2)<t(r,\delta),$$ $\mc{M}_{r,\delta,c_2}(X)$ is non-empty and must be a compact holomorphic symplectic manifold by Toma's condition \cite{TomaCpt}.

\begin{remark}\label{tensor}
	For any line bundle $\lambda \in \Pic(X)$, there is an isomorphism between $\mc{M}_{2, \delta, c_2}(X)$ and $\mc{M}_{2, \delta\otimes \lambda^{\otimes r}, c_2+a}(X)$ given by $\mc{E} \mapsto \mc{E}\otimes \lambda$,
	where $$a=(r-1)c_1(\lambda).c_1(\delta)+\frac{r^2-r}{2}c_1(\lambda)^2.$$ In the rank-2 case, we can use this fact along with the fact that the intersection product on a Kodaira surface is negative definite to find an isomorphic moduli space $\mc{M}_{2,\delta',c_2'}$ such that $c_1(\delta')^2=-8t(2,c_1(\delta'))$, since $$t(2,c_1)=-\frac{1}{8}\max\limits_{\alpha \in NS(X)}(c_1-2\alpha)^2.$$ Under this assumption, we have  $\Delta(2,c_1,c_2)<t(2,c_1)$ if and only if $c_2<0$. Because of this, every moduli space of rank-2 coherent sheaves in the stably irreducible range is isomorphic to a moduli space with invariants satisfying $c_1(\delta)^2=-8t(2,c_1(\delta))$ and $c_2<0$, so we can restrict to these cases when searching for examples.
\end{remark}
\subsection{Line Bundles on Kodaira Surfaces}
Given the data of a smooth genus one curve $B$, a positive integer $d$, a line bundle $\Theta \in \Pic^d(B)$, and a complex number $\tau$ with $|\tau|>1$, the quotient $X:=\Theta^*/\langle \tau\rangle$ via the standard $\C^*$ action on $\Theta^*$ gives a principal bundle with base $B$ and structure group $\C^*/\langle \tau\rangle$, otherwise known as a primary Kodaira surface. (Here $\Theta^*$ represents the complement of the zero section in the total space of $\Theta$.)
Let $\pi:X \to B$ be the induced projection map. We denote the fibre $\pi^{-1}(b)$ of any $b \in B$ by $T:=\C^*/\langle\tau\rangle$. Primary Kodaira surfaces have the following topological and analytic invariants \cite{barth2003compact}:
\begin{align*}
	H^1(X,\Z)=\Z^3, && H^2(X,\Z)=\Z^4\oplus \Z/d\Z, && H^3(X,\Z)=\Z^3\oplus \Z/d\Z,\\
	H^1(X,\mc{O}_X)=\C^2, && H^2(X,\mc{O}_X)=\C, && \Pic^0(X)=\pi^*(\Pic^0(B))\oplus \C^*. 
\end{align*}
Here the torsion part of $H^2(X,\Z)$ is generated by the fibre of $\pi$ \cite{Teleman}. Furthermore, every line bundle on $X$ with torsion first Chern class can be written as $\pi^*(H)\otimes L_\alpha$, where $H \in \Pic(B)$ and $L_\alpha$ is the bundle on $X$ with constant factor of automorphy $\alpha$, subject to the relation $\pi^*\Theta \otimes L_\tau=\mc{O}_X$. The bundles $L_\alpha$ occur as the quotient of $\Theta^*\times \C$ by the $\Z$-action with generator $(x,t)\mapsto (\tau x, \alpha t).$

The torsion-free component of the N\'eron--Severi group can be identified with the group \linebreak $\Hom(\Pic^0(B), \Pic^0(T))$ \cite{Brin}, and the intersection product on $NS(X)$ can be computed using this identification;  the self-intersection is given by $$\vp^2=-2\deg(\vp)$$ for any $\vp \in \Hom(\Pic^0(B), \Pic^0(T))$ \cite[Theorem 1.10, Remark 1.11]{Teleman}.

\begin{proposition}
	For any non-negative integer $n$ and positive integer $r$, there is a Kodaira surface $X$, a line bundle $\delta \in \Pic(X)$ and an integer $c_2$ so that $\mc{M}_{r,\delta,c_2}(X)$ is a compact holomorphic symplectic manifold of dimension $2n$.
\end{proposition}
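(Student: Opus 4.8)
The plan is to reduce the statement to a short lattice computation and then realise the required lattice by constructing a Kodaira surface by hand. By Toma's smoothness/compactness criterion and the non-emptiness theorem recalled above \cite{TomaCpt,TomaNote}, it suffices to produce, for each $n\geq 0$ and $r\geq 1$, a primary Kodaira surface $X$, a line bundle $\delta\in\Pic(X)$, and an integer $c_2$ with $2r^2\Delta(r,\delta,c_2)=2n$ and $0\leq\Delta(r,\delta,c_2)<t(r,\delta)$; such a triple automatically makes $\mc{M}_{r,\delta,c_2}(X)$ a compact holomorphic symplectic manifold of dimension $2r^2\Delta(r,\delta,c_2)=2n$. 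We may assume $r\geq 2$, since for $r=1$ every rank-one torsion-free sheaf is trivially irreducible and $\mc{M}_{1,\delta,n}(X)\cong X^{[n]}$ already has dimension $2n$.

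I would next reduce the whole problem to choosing a single integer, by arranging the torsion-free part of $NS(X)$ to be free of rank one, say generated by a class $\psi$, and setting $c_1(\delta)=\psi$. Write $d_0:=-\tfrac{1}{2}\psi^2$, a positive integer since the intersection form on a Kodaira surface is negative definite. Unwinding the definitions gives $\Delta(r,\delta,c_2)=\tfrac{1}{r^2}\bigl(rc_2+(r-1)d_0\bigr)$, while a direct evaluation of the double supremum defining $t$ — using that $\tfrac{k}{r}\notin\Z$ for $0<k<r$, so the inner supremum over $\alpha$ equals $-2d_0\min(k,r-k)^2/r^2$ — gives $t(r,\psi)=\tfrac{d_0}{r^2(r-1)}$, the outer supremum being attained at $k=1$. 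Now put $d_0:=n(r-1)+r$ and $c_2:=\tfrac{1}{r}\bigl(n-(r-1)d_0\bigr)$; a short check shows $c_2=n(2-r)-(r-1)$ is an integer (negative for $r\geq 2$, in line with Remark \ref{tensor}), that $\Delta(r,\delta,c_2)=n/r^2\geq 0$, and that $\Delta(r,\delta,c_2)<t(r,\delta)$ is equivalent to $n(r-1)<d_0$, which holds by construction. Hence $\dim\mc{M}_{r,\delta,c_2}(X)=2r^2\cdot n/r^2=2n$.

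The step I expect to be the real obstacle is producing a primary Kodaira surface with exactly this Néron--Severi lattice: if the torsion-free part of $NS(X)$ had rank $\geq 2$, or if its generator arose from an isogeny of degree properly dividing $d_0$, the value of $t(r,\delta)$ above would change and the strict inequality $\Delta<t$ could fail. To handle this I would pick an elliptic curve $E$ without complex multiplication and a cyclic subgroup $G\subset E$ of order $d_0$ (which exists inside $E[d_0]\cong(\Z/d_0)^2$), set $E':=E/G$, and let $\psi\colon E\to E'$ be the quotient isogeny, of degree $d_0$. One then chooses a smooth genus-one curve $B$ with $\Pic^0(B)\cong E$ and a number $\tau$, $|\tau|>1$, with $\Pic^0(T)=\Pic^0(\C^*/\langle\tau\rangle)\cong E'$, together with arbitrary remaining data $d\geq 1$ and $\Theta\in\Pic^d(B)$, and forms $X=\Theta^*/\langle\tau\rangle$. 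Since $E$ has no complex multiplication, $\Hom(E,E')$ is free of rank one; and $\psi$ is a generator, for if $\psi=m\vp$ with $m>1$ then $\ker\psi\supseteq E[m]\cong(\Z/m)^2$, contradicting that $G=\ker\psi$ is cyclic. By the identification of the torsion-free part of $NS(X)$ with $\Hom(\Pic^0(B),\Pic^0(T))$ and of its quadratic form with $\vp\mapsto-2\deg\vp$, the torsion-free part of $NS(X)$ is then $\Z\psi$ with $\psi^2=-2d_0$; moreover the torsion summand of $NS(X)$, generated by the fibre class, lies in the kernel of the intersection form and so influences neither $\Delta$ nor $t$. Taking $\delta$ with $c_1(\delta)=\psi$ and $c_2$ as above finishes the construction.
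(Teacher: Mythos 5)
Your proposal is correct, and its numerical core coincides with the paper's: both arguments reduce the statement to exhibiting $X$, $\delta$, $c_2$ with $0\leq \Delta(r,\delta,c_2)<t(r,\delta)$ and $2r^2\Delta(r,\delta,c_2)=2n$, both arrange $NS(X)/\Tors(NS(X))\cong \Z$ with a generator of self-intersection $-2(n(r-1)+r)$, and both arrive at the same choices $c_2=n(2-r)-(r-1)$, $\Delta=n/r^2$, and $t=\frac{n(r-1)+r}{r^2(r-1)}$, with the minimizing index $k=1$ in the $t$-computation. The only genuine divergence is how the required Kodaira surface is produced. The paper chooses a period $\alpha$ with $1,\alpha,\alpha^2$ linearly independent over $\Q$ and takes base and fibre with lattices $\Z+\alpha\Z$ and $\Z+(rn-n+r)\alpha\Z$; the $\Q$-linear independence simultaneously forces $\Hom(\Pic^0(B),\Pic^0(T))\cong \Z$ and identifies the generator as an isogeny of degree exactly $rn-n+r$. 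You instead quotient a non-CM elliptic curve $E$ by a cyclic subgroup of order $d_0=n(r-1)+r$: absence of CM gives rank one, and primitivity of the quotient isogeny follows because divisibility by $m\geq 2$ would force $E[m]\cong(\Z/m)^2$ inside the cyclic kernel. Both routes are sound. Yours isolates explicitly the two properties actually needed (rank-one Néron--Severi and a primitive generator of prescribed degree) and leans on standard isogeny theory, while the paper's packages both into a single transcendence-type condition on $\alpha$ and stays entirely within explicit lattice computations; your version also records cleanly why torsion classes are harmless for $\Delta$ and $t$, a point the paper leaves implicit.
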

\begin{proof}
	If $r=1$, then $\mc{M}_{1, \delta, n}(X)$ is isomorphic to the Douady space $X^{[n]}$ for any line bundle $\delta$ and Kodaira surface $X$, and $X^{[n]}$ is a compact holomorphic symplectic manifold of dimension $2n$ \cite{Beau}. This isomorphism can be written explicitly by mapping each length-$n$ set $Z \in X^{[n]}$ to $(\delta\otimes \mc{I}_Z)$, where $\mc{I}_Z$ is the ideal sheaf of holomorphic functions vanishing at $Z$.
	
	If $r>1$, choose $\alpha \in \mb{H}=\{z \in \C:\text{Im}(z)>0\}$ so that $\Span_\Q\{1,\alpha,\alpha^2\}$ has rank $3$, and let $X$ be any primary Kodaira surface with base $\C^*/\langle \exp(-2\pi i\alpha)\rangle$ and fibre $\C^*/\langle \exp(-2(rn-n+r)\pi i\alpha)\rangle$. Such a Kodaira surface will have $$NS(X)/\text{Tors}(NS(X))\cong\Hom(\Z+\alpha\Z, \Z+(rn-n+r)\alpha\Z)\cong \Z,$$ and any generator $a$ of the torsion-free component of $NS(X)$ will satisfy $a^2=-2(rn-n+r)$. If we consider the $t$-invariant in this case, we get \begin{align*}t(r,a)&=-\frac{a^2}{2}\min_{0<k<r} \frac{1}{k(r-k)} \min_{m \in \Z} \left(\frac{k}{r}-m \right)^2\\
		&=-\frac{a^2}{2}\min_{1\leq k\leq \frac{r}{2}} \frac{k}{(r-k)r^2}\\ &=-\frac{a^2}{2r^2(r-1)},\end{align*}
	using the fact that $\frac{1}{k(r-k)}\left(\frac{k}{r}-m\right)^2$ is invariant under $(k,m)\mapsto (r-k, 1-m)$. After setting $c_2=2n-rn-r+1$, $$\Delta(r,a,c_2)=\frac{1}{r}\left(2n-rn-r+1+\frac{r-1}{r}(rn-n+r)\right)=\frac{n}{r^2}$$
	and $$t(r,a)=-\frac{a^2}{2r^2(r-1)}=\frac{rn-n+r}{r^2(r-1)}=\frac{n}{r^2}+\frac{1}{r(r-1)}>\Delta(r,a,c_2),$$
	so for any line bundle $\delta$ with $c_1(\delta)=a$, the moduli space $\mc{M}_{r,\delta,c_2}$ will be a compact holomorphic symplectic manifold of dimension $2r^2\Delta(r,a,c_2)=2n$. 
\end{proof}

\subsection{The Douady Space of Points for a Kodaira Surface}\label{Douady}

The Douady space $M^{[n]}$ of a complex manifold $M$ is an analytic space parameterizing the coherent $\mc{O}_M$-modules with finite support of length $n$. In the case that $M$ has dimension 2, $M^{[n]}$ is itself a complex manifold, and a holomorphic symplectic structure on $M$ naturally lifts to $M^{[n]}$ when $M$ is 2-dimensional \cite{Beau}. In the following, we compile some known information about Lagrangian fibrations and holomorphic invariants of Douady spaces over Kodaira surfaces in order to compare them with the Lagrangian fibrations we will construct for moduli spaces of stably irreducible sheaves over Kodaira surfaces.

\subsubsection{Lagrangian Fibration Structure}
The following is closely adapted from the discussion in \cite{Lehn} of the Hilbert scheme of points on a K3 surface.

If $\pi:X \to B$ is a principal elliptic surface with fibre $T$, there is an induced abelian variety fibration on the Douady space $X^{[n]}$ given by the composition $\pi^{[n]}:=\varrho\circ \text{Sym}^n(\pi)$, where $\text{Sym}^n(\pi):\text{Sym}^n(X)\to \text{Sym}^n(B)$ is the induced map of symmetric products and $\varrho:X^{[n]}\to \text{Sym}^n(X)$ is the Douady-Barelet morphism. (As shown in \cite{CatCil}, $\text{Sym}^n(B)$ is isomorphic to the projectivization of an indecomposable bundle on $B$ of rank $n$ and degree $-1$.) We will focus on this fibration in the case of $n=2$, as in this case $\varrho$ is simply the blow-up of the diagonal $\Delta$ in $\text{Sym}^n(X)$. If $(b_1,b_2) \in \text{Sym}^2(B)\setminus \Delta_B$, the fibre $(\pi^{[2]})^{-1}(b_1,b_2)$ is  $T\times T$. For a point $(b,b) \in \Delta_B$, the fibre is given by the union of two irreducible components: $$\mb{P}(N_{\text{Sym}^2(X)/\Delta}|_{(\text{Sym}^2(\pi))^{-1}(b,b)})\cong \mb{P}(\mc{T}_X|_{T_b})$$ and $\text{Sym}^2(T)$. The symmetric product $\text{Sym}^2(T)$ can naturally be thought of as the set of effective divisors of degree 2, and it has a ruled surface structure given by sending a divisor to its linear equivalence class in $\Pic^2(T)\cong T$. The intersection of these components is given by the diagonal of $\text{Sym}^2(T)$ and the section $\mb{P}(T_{X/B}|_{T_b})$ in $\mb{P}(TX|_{T_b})$. Since $TX$ is isomorphic to $\pi^*\mc{A}$ with $\mc{A}$ the Atiyah bundle, $TX|_{T_b}$ is trivial and $T_{X/B}|_{T_b}$ is the mapping onto the second factor.

\subsubsection{Topology}
Using a result of de Cataldo and Migliorini \cite{CatMig} (due to G\"ottsche \cite{Gottsche} in the projective case), we have that the Betti numbers of $X^{[n]}$ satisfy \begin{equation}\label{gottsche}
    \sum\limits_{n\geq 0} p(X^{[n]},t)q^n=\prod\limits_{k\geq 0}\prod\limits_{j=0}^4\left(1-(-t)^{2k-2+j}q^k\right)^{(-1)^{j+1}b_j(X)}, 
\end{equation}
where $p(X,t)=\sum\limits_{j\geq 0}b_j(X)t^j$ is the Poincar\'e polynomial. (Note that truncating the product on the right at $k=n$ gives the correct coefficient for $q^i$ for each $i\leq n$, so the Betti numbers of $X^{[n]}$ for a particular choice of $n$ can be computed with this formula.) In particular, if $X$ is a Kodaira surface, the Betti numbers of $X^{[2]}$ are $(1,3,8,18,24,18,8,3,1)$ and the Betti numbers of $X^{[3]}$ are $(1,3,8,22,50,87,106,87,50,22,8,3,1).$ In addition, we have from \cite{Beau} that $\pi_1(X^{[n]})\cong H_1(X,\Z)=\Z^3\oplus \Z/d\Z$ for some $d \in \Z_{>0}$. We will refer to these results when discussing the fundamental group and Betti numbers of $\mc{M}_{2,\delta,c_2}(X)$ in section \ref{Applications}.

\section{Spectral Curves}\label{Spectral}
If $\mc{E} \in \Coh(X)$ is torsion-free, we can associate to $\mc{E}$ the sheaf $\mc{L}_{\mc{E}}=R^1{p_2}_*(\mc{P}\otimes {p_1}^*\mc{E})$ on $B\times \C^*\subseteq B\times \Pic^0(X)$, where $\mc{P}$ is the universal line bundle on $X\times_B (B\times \C^*)\subseteq X\times_B(B\times \Pic^0(X))$, and the $p_i$'s are the morphisms corresponding to the fibred product. The resulting $\mc{L}_{\mc{E}}$ is a torsion sheaf supported on an effective divisor $\tilde{S}_{\mc{E}}$ in $X\times \C^*$, consisting of points $(b,a)$ with multiplicity $h_1(T, (\mc{E}\otimes L_a)|_{\pi^{-1}(b)})$. Since $L_\tau \in \pi^*(\Pic(B))$ and all bundles on $B$ pull back to bundles which are trivial on all fibres of $\pi$, $\tilde{S}_{\mc{E}}$ descends to a divisor $S_{\mc{E}}$ on $J(X):=B\times T^*$, where $T^*:=\Pic^0(T)$. We call $S_{\mc{E}}$ the \emph{spectral curve} of $\mc{E}$. We can describe the spectral curve more concretely as $$S_{\mc{E}}=\{(b,\lambda) \in B\times T^*: H^1(T, \mc{E}|_{\pi^{-1}(b)}\otimes \lambda)\neq 0\},$$ with the multiplicity of $(b,\lambda)$ given by $h^1(T, \mc{E}|_{\pi^{-1}(b)}\otimes \lambda)$. Since the morphism $$\iota^*:H^2(X,\Z)\to H^2(T,\Z)$$ corresponding to the inclusion is zero \cite{Teleman}, $\mc{E}_{\pi^{-1}(b)}\otimes \lambda$ is a degree zero bundle for all $(b,\lambda) \in B\times T^*$. This fact together with the classification of vector bundles on genus 1 curves gives that $S_E\cap \{b\}\times \Pic^0(T)$ contains at most $\rank(\mc{E})$ points if and only if $\mc{E}_{\pi^{-1}(b)}$ is semistable. Thus the spectral curve has the form $$S_{\mc{E}}=\overline{C}+\sum\limits_{b \in U_{\mc{E}}}\mu_b(\{b\}\times T^*)$$
for some integers $\mu_b$, where $\overline{C}$ is an $r$-section of $J(X)\to B$ and $U_{\mc{E}}:=\{b \in B: \mc{E}|_{\pi^{-1}(b)} \text{ is unstable}\}.$
\begin{definition}
	We say that a vector bundle $E$ has a \emph{jump} at $b$ if $E|_{\pi^{-1}(b)}$ is unstable, and the \emph{multiplicity} $\mu(E,b)$ of the jump is the multiplicity $\mu_b$ of $b\times T^*$ in $S_\mc{E}$.
\end{definition}
\subsection{The Graph of a Rank-2 Sheaf}

In the case of rank-2 sheaves, for each $\delta \in \Pic(X)$ we can define an involution $$\iota_\delta: (b,\lambda) \mapsto (b,\delta|_{\pi^{-1}(b)}\otimes \lambda^{-1})$$ on $J(X)$. (This involution only depends on the class of $\delta$ in $\Pic(X)/\pi^*(\Pic(B))$.) The rank-2 sheaves $\mc{E}$ with $\det{\mc{E}}=\delta\otimes \pi^*(\lambda)$ for some $\lambda \in \Pic(B)$ are precisely the sheaves whose spectral curves are invariant under the action of $\iota_\delta$. Thus these spectral curves descend to the ruled surface $\mb{F}_\delta:=J(X)/\iota_\delta$ with induced projection $\rho:\mb{F}_\delta\to B$. By \cite{BrMor}, this ruled surface can be described as $\mb{F}_\delta=\mb{P}(V_\delta)$, where $$V_\delta:={q_1}_*\left(\mc{O}_{J(X)}\left(\mc{S}_{\mc{O}_X}+\mc{S}_\delta\right)\right),$$
and $q_1:J(X)\cong B\times T^*\to B$ is the projection map. The bundle $V_\delta$ is a rank-2 semi-stable vector bundle on $B$ of degree $-\frac{c_1(\delta)^2}{2}$\cite[Lemma 3.8]{BrMor}.
\begin{remark}
	A rank-2 torsion-free sheaf $\mc{E}$ on $X$ is irreducible if and only if $S_\mc{E}=\overline{C}+\sum\limits_{i=1}^k \{b_i\}\times T^*$ with $\overline{C}$ irreducible \cite{BrMor}.
\end{remark}
\begin{definition}
	For any rank-2 torsion-free sheaf $\mc{E}$ with $\det(\mc{E})=\delta$, the \emph{graph} of $\mc{E}$ is the set $S_\mc{E}/\iota_\delta \subset \mb{F}_\delta$.
\end{definition}
\begin{proposition}[Br\^inz\u anescu--Moraru \cite{BrMor}]
	Let $\mc{E}$ be a rank-2 torsion-free sheaf with determinant $\delta$ and second Chern class $c_2$, and let $G$ be the graph of $\mc{E}$. Then $G$ is an effective divisor linearly equivalent to $A_\delta+\rho^*\mf{b}$, where $A_\delta$ is the graph of $\mc{O}_X\oplus \delta$, $\rho:\mb{F}_\delta\to B$ is the induced projection map, and $\mf{b} \in \Pic^{c_2}(B)$.
\end{proposition}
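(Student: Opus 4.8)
The plan is to work on the ruled surface $\mb{F}_\delta=\mb{P}(V_\delta)$ and to use the standard description of its Picard group: fixing a fibre $f$ of $\rho$ and one section $\sigma_0$, one has $\Pic(\mb{F}_\delta)=\Z\sigma_0\oplus\rho^*\Pic(B)$, and a class lies in $\rho^*\Pic(B)$ exactly when it has intersection number $0$ with $f$. Effectivity of $G$ is built into its construction: $S_{\mc{E}}$ is an effective $\iota_\delta$-invariant divisor on $J(X)$, hence equals the pullback $\eta^*G$ of an effective divisor $G$ on $\mb{F}_\delta=J(X)/\iota_\delta$. So everything reduces to two points: (i) checking that $G$ and $A_\delta$ have the same intersection number with $f$ — which gives $[G]-[A_\delta]=\rho^*\mf{b}$ for a unique $\mf{b}\in\Pic(B)$ — and (ii) identifying $\deg\mf{b}$ with $c_2$.

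For (i): for general $b\in B$ the restriction $\mc{E}|_{\pi^{-1}(b)}$ is a rank-$2$, torsion-free (hence locally free), degree-$0$, semistable sheaf on the genus-one curve $T$, and for general $b$ it splits as $\lambda_1\oplus\lambda_2$ with $\lambda_1\otimes\lambda_2=\delta|_{\pi^{-1}(b)}$ and $\lambda_1\ne\lambda_2$. Thus $S_{\mc{E}}$ meets $\{b\}\times T^*$ in exactly the two points corresponding to $\lambda_1,\lambda_2$, which form a single $\iota_\delta$-orbit; since the only other components of $S_{\mc{E}}$ are the vertical fibres $\{b\}\times T^*$ at the jumps (which descend to a pullback $\rho^*(\text{divisor on }B)$), we get $G\cdot f=1$. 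The sheaf $\mc{O}_X\oplus\delta$ is semistable on every fibre, with the two $\iota_\delta$-conjugate spectral points $\mc{O}_T$ and $\delta|_{\pi^{-1}(b)}$, so likewise $A_\delta\cdot f=1$. Hence $[G]-[A_\delta]$ has vanishing $\sigma_0$-component, i.e.\ $[G]-[A_\delta]=\rho^*\mf{b}$ for a unique $\mf{b}\in\Pic(B)$ — a genuine linear equivalence, not merely numerical.

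For (ii): first note that $[S_{\mc{E}}]\in H^2(J(X),\Z)$ depends only on $\text{ch}(\mc{E})$. Indeed, $\mc{L}_{\mc{E}}=R^1{p_2}_*(\mc{P}\otimes{p_1}^*\mc{E})$ is, as recalled above, pure of dimension one with generic multiplicities given by the fibrewise $h^1$, so $c_1(\mc{L}_{\mc{E}})=[S_{\mc{E}}]$, and $\text{ch}(\mc{L}_{\mc{E}})$ is computed from $\text{ch}(\mc{E})$ and $\text{ch}(\mc{P})$ by Grothendieck--Riemann--Roch for the relative Fourier--Mukai transform. Consequently $[G]$, and hence $\mf{b}$, depends only on $(\delta,c_2)$; moreover the $c_2$-contribution to this expression is a multiple of $[f]$ by step (i), and the Grothendieck--Riemann--Roch computation shows that this multiple is exactly $[f]$, so $\deg\mf{b}$ is affine-linear in $c_2$ with slope $1$. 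Normalising against $A_\delta$ — the graph of $\mc{O}_X\oplus\delta$, with $c_2(\mc{O}_X\oplus\delta)=c_1(\mc{O}_X)\cdot c_1(\delta)=0$, so that $\mf{b}=0$ there — yields $\deg\mf{b}=c_2$. Alternatively one can make this explicit by a Leray computation for $\pi\colon X\to B$ applied to $\mc{E}\otimes L_{\lambda_0}$ for general $\lambda_0\in\Pic^0(X)$ (which has the same Chern character as $\mc{E}$): the vertical degree $S_{\mc{E}}\cdot(B\times\{\lambda_0\})$ equals $\deg\mf{b}$ plus the corresponding quantity for $\mc{O}_X\oplus\delta$, and evaluating it through the Leray spectral sequence, using $\omega_{X/B}\cong\mc{O}_X$, $\chi(\mc{O}_X)=0$, and $c_1(\delta)\cdot[T]=0$, expresses it in terms of $c_2$ and $c_1(\delta)$ alone, with the $c_1(\delta)$-dependence cancelling against the $\mc{O}_X\oplus\delta$ term. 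The main obstacle is exactly this step (ii): it requires the relative Fourier--Mukai machinery to be available on the non-K\"ahler surface $X$, and careful control of the scheme structure of $S_{\mc{E}}$ so that its intersection numbers genuinely compute the naive fibrewise $h^1$-counts (equivalently, that the excess-intersection terms in the Leray computation are accounted for). Step (i) and the passage from numerical to linear equivalence are routine given the ruled-surface structure of $\mb{F}_\delta$ recalled at the outset.
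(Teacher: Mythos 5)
Note first that the paper itself gives no proof of this proposition: it is quoted from Br\^inz\u anescu--Moraru \cite{BrMor}, so your proposal can only be judged on its own terms. Within it, the structural part is sound: effectivity and descent of $S_{\mc{E}}$ to $\mb{F}_\delta$, the count $G\cdot f=A_\delta\cdot f=1$, and the conclusion $[G]-[A_\delta]=\rho^*\mf{b}$ from $\Pic(\mb{F}_\delta)\cong\Z\oplus\rho^*\Pic(B)$ are all fine (modulo the harmless check that the bisection part of $S_{\mc{E}}$ is not contained in the fixed locus of $\iota_\delta$). The genuine gap --- which you flag yourself --- is step (ii): the identity $\deg\mf{b}=c_2$ is the entire quantitative content of the proposition, and it is never established. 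The Grothendieck--Riemann--Roch variant as written is essentially circular: saying that ``the $c_2$-contribution is a multiple of $[f]$ and the GRR computation shows the multiple is exactly $[f]$'' is just a restatement of $\deg\mf{b}=c_2$, and the normalisation against $A_\delta$ only fixes the constant term; the affine-linearity in $c_2$ with slope exactly $1$ is precisely what has to be computed, and neither step (i) nor anything else in the proposal produces it.

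Your Leray sketch is in fact the right way to close this, and it is short enough that it should be carried out rather than gestured at. For generic $\lambda\in T^*$ one has $\pi_*(\mc{E}\otimes L_\lambda)=0$ (torsion-free and generically zero), so Riemann--Roch on $X$ (where $\mathrm{Td}(X)=1$ since $c_1(X)=0$ and $e(X)=0$, and $c_1(L_\lambda)$ is torsion) gives $\mathrm{length}\,R^1\pi_*(\mc{E}\otimes L_\lambda)=-\chi(\mc{E}\otimes L_\lambda)=c_2-\tfrac{1}{2}c_1(\delta)^2$, and this length equals $S_{\mc{E}}\cdot(B\times\{\lambda\})$ once the base-change/multiplicity matching at the finitely many intersection points is checked --- the one place where real care is needed, as you note. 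On the other side, $\eta^*(A_\delta+\rho^*\mf{b})\cdot(B\times\{\lambda\})=(\mc{S}_0+\mc{S}_\delta)\cdot(B\times\{\lambda\})+\deg\mf{b}=-\tfrac{1}{2}c_1(\delta)^2+\deg\mf{b}$, since $\mc{S}_0=B\times\{0\}$ misses $B\times\{\lambda\}$ for generic $\lambda$ and $\mc{S}_\delta$ is the graph of $b\mapsto(\delta|_{\pi^{-1}(b)})^{-1}$, a map of degree $-\tfrac{1}{2}c_1(\delta)^2$ under the identification of $NS(X)/\Tors$ with $\Hom(\Pic^0(B),\Pic^0(T))$ recalled in Section \ref{Prelim}. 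Comparing the two expressions gives $\deg\mf{b}=c_2$. (A cheaper route to the slope-$1$ statement you wanted from GRR: passing from $\mc{E}$ to a colength-one subsheaf, or to $\mc{E}^{\vee\vee}$, changes $c_2$ by $\pm1$ and adds or removes one vertical fibre of $S_{\mc{E}}$, hence changes $\deg\mf{b}$ by the same amount, reducing the degree count to the locally free case.)
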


\begin{definition}
	Given a line bundle $\delta \in \Pic(X)$ and an integer $c_2$, the \emph{space of graphs} $\mb{P}_{\delta,c_2}$ is the set of all divisors in $\mb{F}_\delta$ linearly equivalent to $A_\delta +\rho^*\mf{b}$ for some $\mf{b} \in \Pic^{c_2}(B)$. The \emph{graph map} $\mc{G}:\mc{M}_{2,\delta,c_2}(X)\to \mb{P}_{\delta,c_2}$ is the holomorphic map taking each sheaf $\mc{E} \in \mc{M}_{2,\delta,c_2}(X)$ to its graph. (Equivalently, $\mc{G}$ is the map sending each sheaf to its spectral curve. We use these two definitions interchangeably.)
\end{definition}
In the stably irreducible case, we have the following result.
\begin{proposition}[Br\^inz\u anescu--Moraru \cite{BrMor2}]\label{surjective}
	The graph map $\mc{G}:\mc{M}_{2,\delta,c_2}\to \mb{P}_{\delta,c_2}$ is surjective whenever $t(2,c_1(\delta))>0$ and $c_2<0$.
\end{proposition}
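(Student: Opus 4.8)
The plan is to realise every divisor $G\in\mb{P}_{\delta,c_2}$ as $\mc{G}(\mc{E})$ for a concretely built rank-$2$ torsion-free sheaf $\mc{E}$ with $\det\mc{E}=\delta$ and $c_2(\mc{E})=c_2$. By Remark~\ref{tensor} — tensoring by a line bundle translates spectral curves fibrewise on $J(X)$ and hence intertwines the graph maps — I may first normalise $c_1(\delta)^2=-8t(2,c_1(\delta))$, so that the hypothesis $c_2<0$ is exactly the stably irreducible range; in particular every torsion-free sheaf with these invariants is then $g$-stable for every Gauduchon metric, so stability never needs to be checked, and $\mc{M}_{2,\delta,c_2}(X)$ is compact. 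Write $G=\Gamma+\sum_b\mu_b f_b$, where $f_b=\rho^{-1}(b)$ and $\Gamma$ contains no vertical fibre; then $\Gamma$ is a section of $\rho$ lying in $\mb{P}_{\delta,\,c_2-\sum_b\mu_b}$, and since $\mu_b\ge 0$ the invariants $(\delta,c_2-\sum_b\mu_b)$ again lie in the stably irreducible range.

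The heart of the matter is the jump-free case $G=\Gamma$. Let $\overline{C}\subset J(X)=B\times T^{*}$ be the $\iota_\delta$-invariant preimage of $\Gamma$; it is a bisection of $q_1$, and in the stably irreducible range it must be integral, for if $\overline{C}$ were of the form $D+\iota_\delta D$ or $2D$ for a section $D$, the Br\^inz\u anescu--Moraru formula relating the linear class of $\Gamma$ to $(\det,c_2)$ would produce a reducible sheaf (of shape $L_1\oplus L_2$, respectively $\pi^{*}W\otimes L$) with exactly the invariants $(\delta,c_2)$, contradicting stable irreducibility. So let $p\colon\overline{C}\to B$ be the resulting degree-$2$ cover, pick a line bundle $N$ on $\overline{C}$, and let $\mc{E}$ be (a twist of) the relative Fourier--Mukai transform of $N$ along $X\times_B J(X)$ with respect to the universal line bundle $\mc{P}$. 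Restricted to each $\pi^{-1}(b)$ this inverts the classical spectral correspondence recovering a semistable degree-$0$ rank-$2$ bundle on a genus-one curve from its pair of eigenvalues in $\Pic^0$, so $\mc{E}$ is a rank-$2$ torsion-free sheaf with spectral curve $\overline{C}$, hence with graph $\Gamma$; its determinant equals $\pi^{*}(\mathrm{Nm}_p N)$ up to a fixed line bundle, and since the norm $\mathrm{Nm}_p\colon\Pic(\overline{C})\to\Pic(B)$ is surjective I may choose $N$ so that $\det\mc{E}=\delta$, whereupon the Br\^inz\u anescu--Moraru class formula forces $c_2(\mc{E})=c_2$. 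Thus $\mc{E}\in\mc{M}_{2,\delta,c_2}(X)$ with $\mc{G}(\mc{E})=\Gamma$.

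For a general $G$, first build as above a sheaf $\mc{E}_0$ realising $\Gamma$, arranged (using the freedom in $N$) to have determinant $\delta\otimes\pi^{*}\mc{O}_B\!\left(\sum_b\mu_b\, b\right)$ and hence $c_2(\mc{E}_0)=c_2-\sum_b\mu_b$, and then perform, for each $b$, a chain of $\mu_b$ elementary modifications along suitable sub-line-bundles of degree $-1$ in $\mc{E}_0|_{\pi^{-1}(b)}$. Each modification raises the jump multiplicity at $b$ by one, raises $c_2$ by one, and twists the determinant by $\pi^{*}\mc{O}_B(-b)$, so after all $\sum_b\mu_b$ modifications one reaches a sheaf in $\mc{M}_{2,\delta,c_2}(X)$ with spectral curve $\overline{C}+\sum_b\mu_b(\{b\}\times T^{*})$, i.e.\ graph $G$. (Alternatively, the modification step can be avoided: since $\mc{M}_{2,\delta,c_2}(X)$ is compact the image of $\mc{G}$ is closed, $\mb{P}_{\delta,c_2}$ is irreducible because it is a $\mb{P}^n$-bundle over $\Pic^{c_2}(B)$, and the jump-free locus is dense in it, so surjecting onto the jump-free locus already suffices.)

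The genuine obstacle is the jump-free construction: verifying that the Fourier--Mukai transform of $N$ is concentrated in a single cohomological degree and is honestly a torsion-free sheaf of rank $2$ with the asserted Chern data. This is exactly the point at which the elliptic fibre structure of $\pi$, the hypothesis $t(2,c_1(\delta))>0$ (which controls the geometry of $\iota_\delta$ and of the ruled surface $\mb{F}_\delta$), and the hypothesis $c_2<0$ all enter; once it is in hand, the determinant and Chern-class bookkeeping in both cases, and the elementary-modification step, are routine.
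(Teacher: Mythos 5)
The paper does not actually prove this proposition: it is quoted as a result of Br\^inz\u anescu--Moraru \cite{BrMor2}, so there is no in-text argument to compare yours against; your sketch has to be judged as a reconstruction of the cited proof. Its architecture is the right one, and it matches the machinery the paper itself imports later (Proposition \ref{prym}, elementary modifications, the filtration of Remark \ref{filter}): reduce to the non-vertical part of the graph, realize an irreducible bisection by a spectral/pushforward construction with the determinant adjusted along the way, then create the jumps either by positive-degree elementary modifications or (more cheaply) by passing to subsheaves of $\mc{E}_0$ with punctual quotients supported over the $b_i$, which leaves $\det$ untouched, raises $c_2$ by the length, and adds exactly the vertical components.

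As written, though, there are two genuine gaps. First, the step you yourself flag as ``the genuine obstacle'' is the entire content of the existence statement, and your Fourier--Mukai formulation is not quite the right tool when $\overline{C}$ is singular (which does occur among jump-free graphs once $\Delta(2,\delta,c_2)\geq\frac12$): the correct statement is Proposition \ref{prym} (\cite{BrMoFM}), producing regular bundles as $\gamma_*(L)$ for $L\in\Pic(W)$ with $W$ the normalization of $X\times_B C$, with the determinant moved within $\delta\otimes\pi^*\Pic(B)$ by the surjective norm map $\eta_n$ and $c_2$ then forced by the Br\^inz\u anescu--Moraru relation between the graph's linear-equivalence class and $c_2$; without invoking (or reproving) that result your argument does not get off the ground. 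Second, the ``integrality'' of the bisection part is not a side remark but a necessary ingredient --- if $\mb{P}_{\delta,c_2}$ contained a divisor with reducible non-vertical part, surjectivity would simply be false, since stably irreducible sheaves only hit graphs with irreducible bisection part --- and your one-line contradiction needs two facts you do not supply: that every section of $J(X)\to B$ is the graph of a line bundle on $X$ (true because a holomorphic map of elliptic curves is a homomorphism up to translation, translations being realized by the bundles $L_\alpha$ and the homomorphism by a class in $NS(X)\cong\Hom(\Pic^0(B),\Pic^0(T))$ modulo torsion), and that the resulting split sheaf $L_1\oplus L_2$ has second Chern class equal to $c_2$, which again follows from the graph/linear-class relation applied to $L_1\oplus L_2$; only then does $\Delta\geq t$ give the contradiction with the stably irreducible range. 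Two smaller points: the jump-creating modifications are by degree $+1$ \emph{quotients} of $\mc{E}_0|_{\pi^{-1}(b)}$ (the allowable, negative-degree ones remove jumps, they do not create them), and your compactness shortcut still requires the jump-free case for singular irreducible bisections, so it does not let you avoid the first gap.
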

Since in the stably irreducible case the graph map is a natural fibration of $\mc{M}_{2,\delta,c_2}(X)$ over $\mb{P}_{\delta,c_2}$, understanding the base and fibres of the fibration will allow us to determine topological properties of $\mc{M}_{2,\delta,c_2}$ in section 5. In the remainder of this section, we investigate the structure of the $\mb{P}_{\delta,c_2}$ by analyzing the divisors of the form $A_\delta+\rho^*\mf{b}$ for $\mf{b} \in \Pic^{c_2}(B)$.

\begin{proposition}\label{Pnbundle}
    In the case that $\Delta(2,\delta,c_2)>0$, $\mb{P}_{\delta,c_2}\cong\mb{P}(E_{\delta,c_2})$, where $$E_{\delta,c_2}:=(\pi_1)_*\left(\pi_{12}^*\mc{P}_{c_2}(b_0)\otimes \pi_{23}^*\left(\mc{O}_{J(X)}\left(\mc{S}_0+\mc{S}_\delta\right)\right)\right),$$
    $\mc{P}_{c_2}(b_0)$ is the Poincar\'e bundle $\mc{P}_{c_2}=\mc{O}_{B\times B}(\Delta+(c_2-1)B\times\{b_0\}-\{b_0\}\times B)$ of degree $c_2$ line bundles on $B$ with base point $b_0\in B$, and the projections $\pi_{ij}, p_k,q_\ell$ are as in the commutative diagram below.
    $$\begin{tikzcd}
                                             & B \times B \times T^* \arrow[rd, "\pi_{23}"] \arrow[ldd, "\pi_1"', bend right=80, shift right=4] \arrow[ld, "\pi_{12}"'] &                                          \\
B\times B \arrow[d, "p_1"] \arrow[rd, "p_2"] &                                                                                                                          & J(X) \arrow[ld, "q_1"'] \arrow[d, "q_2"] \\
B                                            & B                                                                                                                        & T^*                                     
\end{tikzcd}$$
\end{proposition}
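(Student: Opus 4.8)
The plan is to realize $\mb{P}_{\delta,c_2}$ as a projective bundle over the variety $\Pic^{c_2}(B)$, which we identify with $B$ via the base point $b_0$, and then to verify that the relevant direct image is a locally free sheaf of the expected rank. Recall that a point of $\mb{P}_{\delta,c_2}$ is an effective divisor in $\mb{F}_\delta$ linearly equivalent to $A_\delta + \rho^*\mf{b}$ for some $\mf{b} \in \Pic^{c_2}(B)$. Pulling back along the quotient $J(X) \to \mb{F}_\delta = J(X)/\iota_\delta$, such a divisor corresponds to an $\iota_\delta$-invariant effective divisor on $J(X)$ linearly equivalent to $\mc{S}_0 + \mc{S}_\delta + q_1^*\mf{b}$ (using $A_\delta = (\mc{S}_0+\mc{S}_\delta)/\iota_\delta$). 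So the fibre of the would-be projective bundle over $\mf{b}$ should be the projective space of the linear system $\bigl|\mc{O}_{J(X)}(\mc{S}_0+\mc{S}_\delta) \otimes q_1^*\mf{b}\bigr|$ on $J(X)$; I would first confirm, using that $J(X) = B \times T^*$ and that $\mc{S}_0+\mc{S}_\delta$ meets each fibre $\{b\}\times T^*$ in a degree-2 divisor, that pushing forward to $B$ and twisting, the relevant sheaf $V_\delta \otimes \mf{b}$ has $h^0$ constant in $\mf{b}$ (this is where $\Delta(2,\delta,c_2)>0$, equivalently $c_2 > \tfrac{1}{4}c_1(\delta)^2$, enters — it forces $\deg(V_\delta \otimes \mf{b}) = c_2 - \tfrac{1}{2}c_1(\delta)^2 + 2\cdot(\ldots)$ large enough that $H^1$ vanishes on the genus-one base $B$ and the projective space is honestly of dimension $n$).

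Next I would build the universal object. Over $\Pic^{c_2}(B) \times J(X) = B \times (B \times T^*) = B\times B\times T^*$, the Poincaré bundle $\mc{P}_{c_2}$ on $B\times B$ pulled back along $\pi_{12}$ plays the role of "$q_1^*\mf{b}$ varying in families", while $\pi_{23}^*\mc{O}_{J(X)}(\mc{S}_0+\mc{S}_\delta)$ is the fixed twist. The sheaf
$$E_{\delta,c_2} = (\pi_1)_*\bigl(\pi_{12}^*\mc{P}_{c_2}(b_0) \otimes \pi_{23}^*\mc{O}_{J(X)}(\mc{S}_0+\mc{S}_\delta)\bigr)$$
then has fibre $H^0\bigl(J(X), \mc{O}_{J(X)}(\mc{S}_0+\mc{S}_\delta)\otimes q_1^*\mf{b}\bigr)$ over the point $\mf{b}$, by cohomology and base change — which applies precisely because the $h^0$ was shown to be constant, so $E_{\delta,c_2}$ is locally free and base change holds. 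I would then observe that the projective bundle $\mb{P}(E_{\delta,c_2})$ carries a tautological family of divisors on $\mb{F}_\delta$ (descending the $\iota_\delta$-invariant tautological divisor from $\mb{P}$ of the fibrewise linear system on $J(X)$), inducing a morphism $\mb{P}(E_{\delta,c_2}) \to \mb{P}_{\delta,c_2}$. Conversely $\mb{P}_{\delta,c_2}$ maps to $\Pic^{c_2}(B)$ by sending a divisor to its class $\mf{b}$, and fibrewise this is the linear system, so the two maps are mutually inverse; one checks this is an isomorphism of complex spaces.

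The main obstacle is the base-change/flatness step: I need to know that $h^0\bigl(J(X), \mc{O}_{J(X)}(\mc{S}_0+\mc{S}_\delta)\otimes q_1^*\mf{b}\bigr)$ is independent of $\mf{b} \in \Pic^{c_2}(B)$, so that $(\pi_1)_*$ commutes with base change and produces a vector bundle of the right rank $n+1$ (where $2n = \dim \mc{M}_{2,\delta,c_2} = 2r^2\Delta = 8\Delta(2,\delta,c_2)$, so $n = 4\Delta(2,\delta,c_2)$). By the projection formula this reduces to computing $h^0(B, V_\delta \otimes \mf{b})$ and $h^1(B, V_\delta \otimes \mf{b})$, where $V_\delta = (q_1)_*\mc{O}_{J(X)}(\mc{S}_0+\mc{S}_\delta)$ is the rank-2 semistable bundle of degree $-\tfrac{1}{2}c_1(\delta)^2$ from \cite[Lemma 3.8]{BrMor}. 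Since $B$ has genus one and $\deg(V_\delta\otimes\mf{b}) = 2c_2 - \tfrac{1}{2}c_1(\delta)^2 = 4\Delta(2,\delta,c_2) + \tfrac14 c_1(\delta)^2 \cdot(\cdots)$ — more directly, $\deg(V_\delta \otimes \mf{b}) = \deg V_\delta + 2c_2 > 0$ exactly when $\Delta(2,\delta,c_2)>0$ — semistability of positive degree on an elliptic curve gives $H^1(B, V_\delta\otimes\mf{b}) = 0$ and hence $h^0 = \deg(V_\delta\otimes\mf{b})$ is constant by Riemann–Roch. I would also need to handle the bookkeeping of the base point $b_0$ (the twist by $(c_2-1)B\times\{b_0\} - \{b_0\}\times B$ in $\mc{P}_{c_2}$) carefully so that the fibres are genuinely $\Pic^{c_2}(B)$-worth of degree-$c_2$ systems rather than off by a fixed shift, but this is routine once the vanishing is in place.
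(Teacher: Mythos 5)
Your proposal is correct and follows essentially the same route as the paper: reduce $E_{\delta,c_2}$ via the projection formula and base change to $(p_1)_*\left(\mc{P}_{c_2}(b_0)\otimes p_2^*V_\delta\right)$, note that the fibrewise restriction is $\mf{b}\otimes V_\delta$, semistable of degree $4\Delta(2,\delta,c_2)>0$ on the genus-one curve $B$, so $H^1$ vanishes, $E_{\delta,c_2}$ is locally free with fibres the linear systems, and projectivizing gives $\mb{P}_{\delta,c_2}$. The only slip is the parenthetical rank ``$n+1$'': by your own Riemann--Roch count the rank is $h^0(B,V_\delta\otimes\mf{b})=4\Delta(2,\delta,c_2)$, so $\mb{P}_{\delta,c_2}$ has fibre $\mb{P}^{4\Delta(2,\delta,c_2)-1}$, which is what makes its dimension half that of $\mc{M}_{2,\delta,c_2}(X)$.
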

\begin{proof}
    Since $\pi_1=p_1\circ \pi_{12}$, we have $E_{\delta,c_2}=(p_1)_*\left(\mc{P}_{c_2}(b_0)\otimes (\pi_{12})_*\left(\pi_{23}^*\mc{O}_{J(X)}(\mc{S}_0+\mc{S}_\delta)\right)\right)$ by the projection formula. Since $q_1\circ\pi_{23}=p_2\circ\pi_{12}$, we can apply the base change theorem to obtain \begin{align*}E_{\delta,c_2}&\cong (p_1)_*\left(\mc{P}_{c_2}(b_0)\otimes p_2^*\left((q_1)_*(\mc{O}_{J(X)}(\mc{S}_0+\mc{S}_\delta)\right)\right)\\
    &=(p_1)_*\left(\mc{P}_{c_2}(b_0)\otimes p_2^*V_\delta\right).\end{align*}
    For any $b \in B$, $\left(\mc{P}_{c_2}(b_0)\otimes p_2^*V_\delta\right)|_{\{b\}\times B}\cong \mc{O}_B(b+(r-1)b_0)\otimes V_\delta$, so since $\Delta(2,\delta,c_2)>0$, $\lambda_b\otimes V_\delta$ has positive degree, $E_{\delta,c_2}$ is locally free and the fibre above $b$ in $E_{\delta,c_2}$ is indeed $$H^0(B, \mc{O}_{B}(b+(r-1)b_0)\otimes V_\delta).$$ From this we conclude that $\mb{P}(E_{\delta,c_2})\cong \mb{P}_{\delta,c_2}$.
\end{proof}
\begin{lemma}\label{no-sections}
    For any choice of $\delta \in \Pic(X)$, $c_2\in \Z$, the bundle $E_{\delta,c_2}$ has no global sections.
\end{lemma}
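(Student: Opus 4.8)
The plan is to reduce the statement to a vanishing on $B\times B$ using the description of $E_{\delta,c_2}$ extracted in the proof of Proposition \ref{Pnbundle}. There it is shown that $E_{\delta,c_2}\cong (p_1)_*\bigl(\mc{P}_{c_2}(b_0)\otimes p_2^*V_\delta\bigr)$, so since global sections of a pushforward coincide with global sections upstairs, $H^0(B,E_{\delta,c_2})\cong H^0\bigl(B\times B,\ \mc{P}_{c_2}(b_0)\otimes p_2^*V_\delta\bigr)$. Thus it suffices to prove that this last space is zero, and in fact this will hold for any vector bundle in place of $V_\delta$.

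To do this I would restrict along the fibres of the \emph{other} projection $p_2:B\times B\to B$. From $\mc{P}_{c_2}(b_0)=\mc{O}_{B\times B}\bigl(\Delta+(c_2-1)B\times\{b_0\}-\{b_0\}\times B\bigr)$ one computes that the restriction to a fibre $B\times\{b'\}$ is $\mc{O}_B(b'-b_0)$, a line bundle of degree $0$ on the genus-one curve $B$, which is trivial if and only if $b'=b_0$ (two points of $B$ are linearly equivalent only when equal). Since $p_2^*V_\delta$ is trivial on each such fibre, $\bigl(\mc{P}_{c_2}(b_0)\otimes p_2^*V_\delta\bigr)\big|_{B\times\{b'\}}\cong \mc{O}_B(b'-b_0)^{\oplus 2}$, which has no nonzero global sections for $b'\neq b_0$. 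Now $\mc{P}_{c_2}(b_0)\otimes p_2^*V_\delta$ is locally free, hence torsion free, so a section vanishing on the fibres over a dense open subset of $B$ vanishes identically; therefore a hypothetical nonzero global section $s$ would restrict to a nonzero section of $\mc{O}_B(b'-b_0)^{\oplus 2}$ for all $b'$ in a dense open subset of $B$, and choosing such a $b'\neq b_0$ gives a contradiction. Hence $H^0(B,E_{\delta,c_2})=0$.

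I do not anticipate a serious obstacle here; the only point that needs care is the bookkeeping of the Poincar\'e divisor along the two rulings of $B\times B$ — the fibres of $p_1$ see a line bundle of degree $c_2$ (this is exactly what makes $E_{\delta,c_2}$ the right bundle in Proposition \ref{Pnbundle}), while the fibres of $p_2$ see degree $0$, and it is the latter that forces the vanishing. Equivalently, one can package the same computation through the Fourier--Mukai transform of $B$: up to a twist pulled back from the second factor, $\mc{P}_{c_2}(b_0)$ is the normalized Poincar\'e bundle, and $\mc{O}_B$ is $\mathrm{WIT}_1$, so $(p_2)_*\bigl(\mc{P}_{c_2}(b_0)\otimes p_2^*V_\delta\bigr)=0$ by the projection formula and hence $H^0(B,E_{\delta,c_2})=H^0\bigl(B,(p_2)_*(\mc{P}_{c_2}(b_0)\otimes p_2^*V_\delta)\bigr)=0$.
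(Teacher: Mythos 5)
Your proof is correct and follows essentially the same route as the paper: both arguments reduce to the fact that $\mc{P}_{c_2}(b_0)$ restricts to a degree-zero, generically non-trivial line bundle on the fibres of $p_2$, so sections must vanish along the second ruling (equivalently $(p_2)_*\mc{P}_{c_2}(b_0)=0$, which is exactly the paper's key step and your Fourier--Mukai reformulation). The only cosmetic difference is that you work on $B\times B$ with $V_\delta$ already pushed down, whereas the paper runs the projection-formula and base-change bookkeeping through $J(X)$.
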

\begin{proof}
    Note that \begin{align*}H^0(B,E_{\delta,c_2})&=H^0(B\times B \times T^*, \pi_{12}^*\mc{P}_{c_2}(b_0)\otimes \pi_{23}^* \mc{O}_{J(X)}(\mc{S}_0+\mc{S}_\delta))\\
    &=H^0(J(X), (\pi_{23})_*(\pi_{12}^*\mc{P}_{c_2}(b_0)\otimes \pi_{23}^*\mc{O}_{J(X)}(\mc{S}_0+\mc{S}_\delta))).\end{align*} Using the projection formula, we have $H^0(B, E_{\delta,c_2})=H^0(J(X), (\pi_{23})_*\pi_{12}^*\mc{P}_{c_2}(b_0)\otimes \mc{O}_{J(X)}(\mc{S}_0+\mc{S}_\delta)).$ From the base change theorem, $(\pi_{23})_*\pi_{12}^*\mc{P}_{c_2}(b_0)\cong q_1^*(p_2)_*\mc{P}_{c_2}.$ Since the restriction to any fibre of $p_2$ is a degree-0 line bundle which is trivial only for $p_2^{-1}(b_0)$, $(p_2)_*\mc{P}_{c_2}=0$. From this we conclude that $H^0(B, E_{\delta,c_2})=0$.
\end{proof}

For the next proof, we will employ the notation of \cite{CatCil} of $E_r(b_0) := (p_1)_*(\mc{P}_{r}(b_0))$, which is a stable bundle of degree $-1$ and rank $r$ for all $r>0$.
\begin{proposition}\label{spectral} Let $\delta\in \Pic(X)$ and $c_2\in \Z$ be such that $\Delta(2,\delta,c_2)\geq 0$.
	\begin{enumerate}
		\item[i.] If $\Delta(2,\delta,c_2)=0$, then $\mb{P}_{\delta,c_2}$ consists of $2$ points.
		\item[ii.] If $\Delta(2,\delta,c_2)>0$ and $4\Delta(2,\delta,c_2)$ is odd, then $\mb{P}_{\delta,c_2}\cong \mb{P}(E_{\delta,c_2})$, where $E_{\delta,c_2}$ is a stable bundle of rank $4\Delta(2,\delta,c_2)$ and degree -2.
		\item[iii.] If $\Delta(2,\delta,c_2)>0$ and $4\Delta(2,\delta,c_2)$ is even, then $\mb{P}_{\delta,c_2}\cong \mb{P}(E_{\delta,c_2})$, where $E_{\delta,c_2}$ is the direct sum of two stable bundles, each of rank $2\Delta(2,\delta,c_2)$ and degree $-1$.
	\end{enumerate}
\end{proposition}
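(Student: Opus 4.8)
The plan is to reduce everything to the structure of the rank-$2$ bundle $V_\delta$ on the elliptic curve $B$ together with properties of the Fourier--Mukai transform on $B$. Assume first $\Delta(2,\delta,c_2)>0$, so that Proposition~\ref{Pnbundle} gives $\mb{P}_{\delta,c_2}\cong\mb{P}(E_{\delta,c_2})$ with $E_{\delta,c_2}\cong(p_1)_*\bigl(\mc{P}_{c_2}(b_0)\otimes p_2^*V_\delta\bigr)$. Comparing restrictions to the fibres $\{b\}\times B$ of $p_1$ shows $\mc{P}_{c_2}(b_0)\cong\mc{P}_0\otimes p_2^*\mc{O}_B(c_2b_0)$, where $\mc{P}_0$ is the degree-$0$ Poincar\'e bundle normalized at $b_0$; hence, writing $\widehat{\mc{F}}:=(p_1)_*(\mc{P}_0\otimes p_2^*\mc{F})$ for the Fourier--Mukai transform, $E_{\delta,c_2}\cong\widehat{W}$ with $W:=V_\delta\otimes\mc{O}_B(c_2b_0)$. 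Since $V_\delta$ is semistable of rank $2$ and degree $-\tfrac{1}{2} c_1(\delta)^2$, the bundle $W$ is semistable of rank $2$ and degree $2c_2-\tfrac{1}{2} c_1(\delta)^2=4\Delta(2,\delta,c_2)>0$. I would then invoke the standard behaviour of $\widehat{(\cdot)}$ on an elliptic curve: a semistable sheaf of rank $s$ and positive degree $m$ has transform a semistable vector bundle of rank $m$ and degree $-s$; stable goes to stable; direct sums go to direct sums; and tensoring by a degree-$0$ line bundle corresponds to translation on the base. This already gives that $E_{\delta,c_2}$ is semistable of rank $4\Delta$ and degree $-2$; the rank and degree can alternatively be obtained from Grothendieck--Riemann--Roch for $p_1\colon B\times B\to B$, the key input being $c_1\bigl(\mc{P}_{c_2}(b_0)\bigr)^2=-2[\mathrm{pt}]$.

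Now split on the parity of $4\Delta$. If $4\Delta$ is odd then $-\tfrac{1}{2} c_1(\delta)^2$ is odd, so the semistable rank-$2$ bundle $V_\delta$ is stable; hence $W$ is stable and $E_{\delta,c_2}=\widehat{W}$ is stable of rank $4\Delta$ and degree $-2$, which is (ii). (Equivalently, once semistability of $E_{\delta,c_2}$ is known, stability is automatic because $\gcd(4\Delta,2)=1$.) If $4\Delta$ is even then $-\tfrac{1}{2} c_1(\delta)^2$ is even, and by the structure theory of $V_\delta$ on $B$ (Br\^inz\u anescu--Moraru) $V_\delta\cong L_1\oplus L_2$ with $\deg L_i=-\tfrac{1}{4} c_1(\delta)^2$. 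Then $W\cong L_1(c_2b_0)\oplus L_2(c_2b_0)$ with each $L_i(c_2b_0)$ a line bundle of degree $2\Delta$, so $E_{\delta,c_2}=\widehat{L_1(c_2b_0)}\oplus\widehat{L_2(c_2b_0)}$, and each summand is a translate of the bundle $E_{2\Delta}(b_0)$ of \cite{CatCil}, hence stable of rank $2\Delta$ and degree $-1$. This is (iii).

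For (i), $\Delta=0$, Proposition~\ref{Pnbundle} no longer applies, and I would work directly on $\mb{F}_\delta=\mb{P}(V_\delta)$. Since $\rho_*\mc{O}_{\mb{F}_\delta}(A_\delta)\cong V_\delta$, the projection formula gives $\rho_*\mc{O}_{\mb{F}_\delta}(A_\delta+\rho^*\mf{b})\cong V_\delta\otimes\mf{b}$, so $h^0\bigl(\mb{F}_\delta,\mc{O}(A_\delta+\rho^*\mf{b})\bigr)=h^0(B,V_\delta\otimes\mf{b})$ for all $\mf{b}\in\Pic^{c_2}(B)$; as distinct $\mf{b}$ give non-linearly-equivalent divisors, $\mb{P}_{\delta,c_2}=\bigsqcup_{\mf{b}\in\Pic^{c_2}(B)}\mb{P}\bigl(H^0(B,V_\delta\otimes\mf{b})\bigr)$. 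When $\Delta=0$ one has $-\tfrac{1}{2} c_1(\delta)^2=-2c_2$, necessarily even, so again $V_\delta\cong L_1\oplus L_2$ with $\deg L_i=-c_2$, and here $L_1\not\cong L_2$. Then $\deg(V_\delta\otimes\mf{b})=0$ for all $\mf{b}$, and $H^0(B,V_\delta\otimes\mf{b})=H^0(L_1\mf{b})\oplus H^0(L_2\mf{b})$ is nonzero --- and then one-dimensional --- exactly for the two distinct bundles $\mf{b}=L_1^{-1}$ and $\mf{b}=L_2^{-1}$. Hence $\mb{P}_{\delta,c_2}$ consists of exactly two reduced points.

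I expect the principal obstacle to be the fine structure of $V_\delta$ underpinning all three cases: that $V_\delta$ is stable precisely when $c_1(\delta)^2\equiv 2\pmod{4}$, that it otherwise splits as a sum of two line bundles (rather than being a non-split self-extension), and that in the $\Delta=0$ situation of (i) these two line bundles are moreover distinct --- the last point can genuinely fail (e.g.\ when $c_1(\delta)$ is torsion), so some non-degeneracy is implicitly in play. Establishing this rests on the explicit geometry of the sections $\mc{S}_0$, $\mc{S}_\delta$ in $J(X)$ and the involution $\iota_\delta$, as in the work of Br\^inz\u anescu and Moraru. A secondary point is pinning down the exact (semi)stability-preservation statement for the Fourier--Mukai transform on $B$; this is cleanest after twisting into the range of strictly positive slope --- the purpose of the $\mc{O}_B(c_2b_0)$-twist above --- and one should also check the smallest value $\Delta=\tfrac{1}{4}$ separately (there $E_{\delta,c_2}$ has rank $1$ and $\mb{P}_{\delta,c_2}\cong B$) and verify that $\widehat{W}$ is locally free, which holds because $\deg W>0$ and $W$ is semistable, so $H^1$ vanishes on every fibre.
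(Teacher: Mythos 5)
Your Fourier--Mukai reformulation of $E_{\delta,c_2}\cong(p_1)_*(\mc{P}_{c_2}(b_0)\otimes p_2^*V_\delta)$ is sound, and for case (ii) it is genuinely cleaner than the paper's route: the paper identifies $E_{\delta,c_2}$ as the unique non-split extension of $E_{r+1}(b'_2)$ by $E_r(b'_1)$ via an explicit extension-class computation and then proves stability by a hands-on slope argument, whereas semistability of the transform plus $\gcd(4\Delta,2)=1$ settles stability at once. The genuine gap is in cases (i) and (iii): everything there hinges on the fine structure of $V_\delta$ when $\deg V_\delta=-\tfrac{1}{2}c_1(\delta)^2$ is even, namely that $V_\delta$ splits as a direct sum of two line bundles rather than being a twist of the indecomposable self-extension (Atiyah) bundle, and, for (i), that the two summands are non-isomorphic. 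You attribute the splitting to ``structure theory of $V_\delta$ (Br\^inz\u anescu--Moraru)'' and flag the distinctness yourself, but neither is quotable: \cite{BrMor} only gives that $V_\delta$ is semistable of the stated rank and degree. And these are not cosmetic points --- if $V_\delta$ were the indecomposable bundle, your $W$ would be indecomposable, hence so would its transform, and (iii) would be false as stated; in (i) the Atiyah case would give a single point and the case $L_1\cong L_2$ would give a $\mb{P}^1$, not two points. (Note also that (iii) needs only the splitting, not distinctness, since the two stable summands are allowed to coincide.)

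The paper supplies exactly these missing inputs, and you would need them (or an equivalent) to complete your argument. Decomposability is deduced from Lemma \ref{no-sections}: taking the exact sequence \eqref{PB} one step up in $c_2$ when $\Delta(2,\delta,c_2)=0$, indecomposability of $V_\delta$ would force $R^1(p_1)_*(\mc{P}_{c_2}(b_0)\otimes p_2^*V_\delta)\cong\C_b$ for a single $b$, whence by \cite{Boozer} $E_{\delta,c_2+1}\cong\mc{O}_B\oplus\mc{O}_B(-b)$, which has a global section --- contradicting Lemma \ref{no-sections}; since $V_\delta$ depends only on $\delta$, this splitting then feeds into case (iii) as well. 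Distinctness of the summands in case (i) is not obtained from the geometry of $\mc{S}_0,\mc{S}_\delta$ but from the moduli space itself: when $\Delta(2,\delta,c_2)=0$ the space $\mc{M}_{2,\delta,c_2}(X)$ is finite and the graph map is onto $\mb{P}_{\delta,c_2}$ (Proposition \ref{surjective}, which is where the implicit hypothesis $t(2,c_1(\delta))>0$ enters --- consistent with your observation that distinctness can fail for torsion $c_1(\delta)$), so $\mb{P}_{\delta,c_2}$ cannot be positive-dimensional, ruling out $L\cong L^{-1}$. With those two facts imported, your case (i) computation on $\mb{F}_\delta$ and your FM derivation of (iii) go through.
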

\begin{proof}
    We begin by constructing a long exact sequence involving the $E_{\delta,c_2}$ which will be helpful in later computations. Note first that there is a natural exact sequence \begin{equation}\label{degrees}\begin{tikzcd}0 \arrow[r] & \mc{P}_{c_2-1}(b_0) \arrow[r] & \mc{P}_{c_2}(b_0) \arrow[r] & \mc{O}_{B \times \{b_0\}} \arrow[r] & 0
    \end{tikzcd}\end{equation}
    relating Poincar\'e bundles of adjacent degrees \cite{CatCil}. Pulling back by $\pi_{12}$ and twisting by the line bundle $\mc{L}_\delta:=\pi_{23}^*\mc{O}_{J(X)}(\mc{S}_0+\mc{S}_\delta)$ gives a new short exact sequence $$\begin{tikzcd}0 \arrow[r] & \pi_{12}^*\mc{P}_{c_2-1}\otimes \mc{L}_\delta \arrow[r] & \pi_{12}^*\mc{P}_{c_2} \otimes \mc{L}_\delta \arrow[r] & \mc{O}_{B\times \{b_0\}\times T^*}\otimes \mc{L}_\delta \arrow[r] & 0.
    \end{tikzcd}$$
    If we pushforward by $\pi_{12}$, we get 
    $$\begin{tikzcd}0 \arrow[r] & \mc{P}_{c_2-1}(b_0)\otimes (\pi_{12})_*\mc{L}_\delta \arrow[r] & \mc{P}_{c_2}(b_0)\otimes (\pi_{12})_*\mc{L}_\delta \arrow[ld] \\
    & (\pi_{12})_*(\mc{O}_{B\times\{b_0\}\times T^*}\otimes \mc{L}_\delta) \arrow[r]   & \mc{P}_{c_2-1}(b_0)\otimes R^1(\pi_{12})_*\mc{L}_\delta
    \end{tikzcd}$$
    from the projection formula. By the base change theorem, $$R^1(\pi_{12})_*\mc{L}_\delta\cong p_2^*R^1(q_1)_*\mc{O}_{J(X)}(\mc{S}_0+\mc{S}_\delta)=0,$$
    so the previous exact sequence is short exact.
    Finally, we can pushforward by $p_1$ to obtain the long exact sequence
     $$\begin{tikzcd}0 \arrow[r] & (p_1)_*\left(\mc{P}_{c_2-1}(b_0)\otimes p_2^*V_\delta\right)\arrow[r] & E_{\delta,c_2} \arrow[r] & (\pi_1)_*\left(\mc{O}_{B\times\{b_0\}\times T^*}\otimes \mc{L}_\delta\right) \arrow[lld] \\
   & R^1(p_1)_*(\mc{P}_{c_2-1}(b_0)\otimes p_2^*V_\delta) \arrow[r] & R^1(p_1)_*(\mc{P}_{c_2}(b_0)\otimes p_2^*V_\delta) \arrow[r] & \ldots .
    \end{tikzcd}$$
    Note that $\left(\mc{P}_{c_2}\otimes V_\delta\right)|_{\{b\}\times B}$ is semistable of positive degree when $\Delta(2,\delta,c_2)>0$, and any rank-2 semistable vector bundle of positive degree on a genus one curve has trivial first cohomology, so we have $R^1(p_1)_*\left(\mc{P}_{c_2}\otimes p_2^*V_\delta\right)=0$. We also have $$(\pi_1)_*\left(\mc{O}_{B\times\{b_0\}\times T^*}\otimes \mc{L}_\delta\right) \cong (q_1)_*(\mc{O}_{B\times T^*}(B\times \{0\}+B\times \{\delta_{b_0}\}))=(q_1)_*q_2^*\mc{O}_{T^*}(0+\delta_{b_0}),$$ so $$(\pi_1)_*\left(\mc{O}_{B\times\{b_0\}\times T^*}\otimes \mc{L}_\delta\right)) \cong \mc{O}_B\oplus\mc{O}_B.$$
    
    This gives the exact sequence \begin{equation}\label{PB}
        \begin{tikzcd}
            0 \arrow[r] & (p_1)_*(\mc{P}_{c_2-1}(b_0)\otimes p_2^*V_\delta) \arrow[r] & E_{\delta,c_2} \arrow[lld] \\
             \mc{O}_B\oplus \mc{O}_B \arrow[r] & R^1(p_1)_*(\mc{P}_{c_2-1}(b_0)\otimes p_2^*V_\delta) \arrow[r] & 0.
        \end{tikzcd}
    \end{equation}
    
    We now consider the case of $\Delta(2,\delta,c_2)=0$. The exact sequence \eqref{PB} for $E_{\delta,c_2+1}$ is then
    $$\begin{tikzcd} 0 \arrow[r] & E_{\delta,c_2+1} \arrow[r] & \mc{O}_B\oplus \mc{O}_B \arrow[r] & R^1(p_1)_*(\mc{P}_{c_2}(b_0)\otimes p_2^*V_\delta) \arrow[r] & 0,
    \end{tikzcd}$$
    where $R^1(p_1)_*(\mc{P}_{c_2}(b_0)\otimes p_2^*V_\delta)$ is a torsion sheaf whose stalk at $b \in B$ is given by $H^0(B, \mc{O}_B(b+(c_2-1)b_0)\otimes V_\delta)$. Suppose for a contradiction that $V_\delta$ is an indecomposable bundle. Then $R^1(p_1)_*(\mc{P}_{c_2}(b_0)\otimes p_2^*V_\delta)=\C_b$ for some $b \in B$, and by \cite{Boozer} we have $E_{\delta,c_2+1}\cong \mc{O}_B\oplus \mc{O}_B(-b)$, which has a non-zero section. This contradicts Lemma \ref{no-sections}, so $V_\delta$ is decomposable. Thus $V_\delta\cong \lambda\otimes (L\oplus L^{-1})$ for some $\lambda \in \Pic^{-c_2}(B)$ and $L\in \Pic^0(B)$.
    
    From the fact that $\mc{M}_{2,\delta,c_2}(X)$ is finite when $\Delta(2,\delta,c_2)=0$, there must be a line bundle $\lambda' \in \Pic^{c_2}(B)$ such that $H^0(B,\lambda'\otimes V_\delta)=\C.$
    This implies that $L\not\cong L^{-1}$, so $\mb{P}_{\delta,c_2}$ consists of two points corresponding to $\lambda'=\lambda^{-1}\otimes L$ and $\lambda'=\lambda^{-1}\otimes L^{-1}$.

    In the case where $\Delta(2,\delta,c_2)>0$ and $4\Delta(2,\delta,c_2)$ is odd, the fact that $V_\delta$ is semistable implies that $V_\delta\cong \lambda\otimes \mc{F}_p$, where $\lambda \in \Pic^{\frac{-c_1^2-2}{4}}(B)$, and $\mc{F}_p$ is the unique non-trivial extension of $\mc{O}_{B}(p)$ by $\mc{O}_B$ with $p \in B$. Thus $\mc{P}_{c_2}(b_0)\otimes p_2^*V_\delta$ fits into the exact sequence $$\begin{tikzcd} 0 \arrow[r] & \mc{P}_{c_2}(b_0)\otimes p_2^*\lambda \arrow[r] & \mc{P}_{c_2}(b_0)\otimes p_2^*V_\delta \arrow[r] & \mc{P}_{c_2}(b_0)\otimes p_2^*(\lambda(p)) \arrow[r] & 0.
    \end{tikzcd}$$
    Set $r=2\Delta(2,\delta,c_2)-\frac{1}{2}.$ Then there exist points $b_1,b_2\in B$ and line bundles $\lambda_1, \lambda_2 \in \Pic^0(B)$ such that $\mc{P}_{c_2}(b_0)\otimes p_2^*\lambda=\mc{P}_r(b_1)\otimes p_1^*\lambda_1$ and $\mc{P}_{c_2}(b_0)\otimes p_2^*(\lambda(p))=\mc{P}_{r+1}(b_2)\otimes p_1^*\lambda_2$.
    Pushing forward by $p_1$ gives the exact sequence $$\begin{tikzcd}0 \arrow[r] & (p_1)_*(\mc{P}_{r}(b_1))\otimes \lambda_1 \arrow[r] & E_{\delta,c_2}\otimes \lambda_2 \arrow[r] & E_{r+1}(b_2)\otimes \lambda_2 \arrow[lld]\\
    & R^1(p_1)_*(\mc{P}_{r}(b_1))\otimes \lambda_1 \arrow[r] & 0,
    \end{tikzcd}$$
    since we showed previously that $R^1(p_1)_*(\mc{P}_{c_2}(b_0)\otimes p_2^*V_\delta)=0$.
    
    If $\Delta(2,\delta,c_2)=\frac{1}{4}$ and therefore $r=0$, $(p_1)_*(\mc{P}_r(b_1))=0$ and $R^1(p_1)_*(\mc{P}_r(b_1))=\C_{b_1}$. Therefore when $\Delta(2,\delta,c_2)=\frac{1}{4}$, we have $E_{\delta,c_2}\cong E_1(b_2)\otimes \mc{O}_B(-b_1)\otimes \lambda_2\cong \mc{O}_B(-b_1-b_2)\otimes \lambda_2$ \cite{CatCil}. Thus $E_{\delta,c_2}$ is some line bundle of degree -2 on $B$.
    
    If instead $\Delta(2,\delta,c_2)>\frac{1}{4}$, $(p_1)_*(\mc{P}_r(b_1))=E_r(b_1)$ and $R^1(p_1)_*(\mc{P}_r(b_1))=0$, so $E_{\delta,c_2}$ is an extension of $E_{r+1}(b_2)\otimes \lambda_2\cong E_{r+1}(b'_2)$ by $E_r(b_1)\otimes \lambda_1\cong E_r(b'_1)$.
    
    The extension class corresponding to $E_{\delta,c_2}$ is an element of $$\Ext^1(E_{r+1}(b'_2), E_r(b'_1))=H^1(B,E_{r+1}(b'_2)^\vee \otimes E_r(b'_1)).$$ The tensor product of a stable bundle of rank $r+1$ with a stable bundle of rank $r$ is another stable bundle of rank $r(r+1)$ \cite[Lemma 28]{Atiyah}, so in particular there is some point $b'_3$ such that $E_{r+1}(b'_2)^\vee \otimes E_r(b'_1)=E_{r(r+1)}(b'_3)$. As shown in \cite{CatCil}, $E_{r(r+1}(b'_3)$ is an extension $$\begin{tikzcd}0 \arrow[r] & E_{r(r+1)}(b'_3)\arrow[r] & E_{r(r+1)+1}(b'_3)\arrow[r] & \mc{O}_B \arrow[r] & 0,
    \end{tikzcd}$$
   and the corresponding long exact sequence in cohomology induces an isomorphism $$H^0(B,\mc{O}_B)\cong H^1(B,E_{r(r+1)}(b'_3)).$$
    
    Similarly, the extension class corresponding to $\mc{P}_{c_2}(b_0)\otimes p_2^*V_\delta$ is a non-zero element of $$\Ext^1(\mc{P}_{c_2}(b_0)\otimes p_2^*\lambda(p), \mc{P}_{c_2}(b_0)\otimes p_2^*\lambda)=H^1(B\times B, p_2^*(\mc{O}_B(-p))).$$
    Since $p_1$ maps a surface to a curve, its Leray spectral sequence degenerates at page 2, giving $$H^1(B\times B, p_2^*(\mc{O}_B(-p)))=H^1(B, (p_1)_*(p_2^*(\mc{O}_B(-p)))\oplus H^0(B,R^1(p_1)_*(p_2^*(\mc{O}_B(-p))).$$
    The base change theorem gives that $R^i(p_1)_*(p_2^*(\mc{O}_B(-p)))$ is the trivial bundle on $B$ of rank $h^i(B, \mc{O}_B(-p))$, so $H^1(B\times B, p_2^*(\mc{O}_B(-p)))=H^0(B, \mc{O}_B).$ We now have an isomorphism mapping the extension class of $\mc{P}_{c_2}(b_0)\otimes p_2^*V_\delta$ to the extension class of $E_{\delta,c_2}$ via pushforward by $p_1$, so $E_{\delta,c_2}$ is the unique non-split extension of $E_{r+1}(b'_2)$ by $E_{r+1}(b'_1)$. We now show that this extension is a stable bundle. Suppose for a contradiction that $\mc{F}$ is a non-trivial sub-bundle of $E_{\delta,c_2}$ with $\mu(\mc{F})\geq \mu(E_{\delta,c_2})$. Without loss of generality, we can assume that $\mc{F}$ is stable. If $\mc{F}$ has degree $-2$, then $\mu(\mc{F})<\mu(E_{\delta,c_2})$, so $\mc{F}$ can only be a destabilizing bundle of $E_{\delta,c_2}$ if $\deg(\mc{F})\geq -1$. No subbundle of $E_r(b'_1)$ is destabilizing, so there must be a non-zero morphism $f:\mc{F} \to E_{r+1}(b'_2)$ given by the inclusion of $\mc{F}$ into $E_{\delta,c_2}$ followed by projection to $E_{r+1}(b'_2)$. If $f$ is surjective, $E_{r+1}(b'_2)$ is a quotient of $\mc{F}$ and $\mu(\mc{F})\leq \mu(E_{r+1}(b'_2)$. Then $\mc{F}$ must be of the form $E_k(q)$ for some $0<k\leq r+1$ and $q \in B$. Since $f$ is surjective, this is only possible if $\mc{F}=E_{r+1}(b'_2)$. Since the only endomorphisms of $E_{r+1}(b'_2)$ are constant multiples of the identity, this would induce a splitting of $E_{\delta,c_2}$, so $f$ cannot be surjective. Since $f$ is not surjective, then $\mu(\mc{F})\leq \mu(\mathrm{im}(f))<\mu(E_{r+1}(b'_2))$ by indecomposability of $\mc{F}$ and stability of $\mc{E}_{r+1}(b'_2)$. We now have $\deg(\mc{F})\geq -1$ and $\mu(\mc{F})< \frac{-1}{r+1}$, so $\mu(\mc{F})\leq \frac{-1}{r}<\mu(E_{\delta,c_2})$. This implies that $\mc{F}$ cannot be a destabilizing bundle for $E_{\delta,c_2}$, so $E_{\delta,c_2}$ is stable.
    
    Finally, we consider the case of $\Delta(2,\delta,c_2)>0$ with $4\Delta(2,\delta,c_2)$ even. As shown in the case where $\Delta(2,\delta,c_2)=0$, $V_\delta=\lambda\otimes (L\oplus L^{-1})$ where $\lambda \in \Pic^{-\frac{c_1^2(\delta)}{4}}(B)$ and $L \in \Pic^0(B)$. From this we conclude that $E_{\delta,c_2}\cong (p_1)_*(\mc{P}_{c_2}(b_0)\otimes \lambda\otimes L)\oplus (p_1)_*(\mc{P}_{c_2}(b_0)\otimes \lambda\otimes L^{-1}).$ Set $r=2\Delta(2,\delta,c_2)$. Then there are $b_1,b_2\in B$ and $\lambda_1,\lambda_2 \in \Pic^0(B)$ such that $\mc{P}_{c_2}(b_0)\otimes \lambda\otimes L\cong\mc{P}_r(b_1)\otimes p_1^*\lambda_1$ and $\mc{P}_{c_2}(b_0)\otimes \lambda\otimes L^{-1}\cong \mc{P}_r(b_2)p_1^*\otimes \lambda_2$, so $E_{\delta,c_2}\cong E_r(b'_1)\oplus E_r(b'_2)$ for some $b'_1,b'_2 \in B$.
\end{proof}
\begin{remark}
    Recall that the natural Lagrangian fibration on the Douady space $X^{[n]}$ has base $\text{Sym}^n(B)\cong \mb{P}(E_n(u))$ for some $u \in B$ \cite{CatCil}, so in particular $\mb{P}_{\delta,c_2}$ can only be isomorphic to $\text{Sym}^n(B)$ when $\Delta(2,\delta,c_2)=\frac{1}{4}$. 
\end{remark}

\begin{remark}\label{filter}
	We can write the space of graphs $\mb{P}_{\delta,c_2}$ as the union $$\mb{P}_{\delta,c_2}=\bigcup\limits_{k=0}^{\lfloor 2\Delta(2,\delta,c_2)\rfloor} \mb{P}_{\delta,c_2}^k,$$ where $$\mb{P}_{\delta,c_2}^k:=\{S \in \mb{P}_{\delta,c_2}: S=C+\sum\limits_{i=1}^k \{b_i\}\times T^*, C \text{ a bisection of } J(X)\}.$$
	Note that $\bigcup\limits_{i\geq k}\mb{P}_{\delta,c_2}^i$ is a closed subset of codimension $k$ in $\mb{P}_{\delta,c_2}$, and since for any spectral curve $C+\sum\limits_{i=1}^k \{b_i\}\times T^* \in \mb{P}_{\delta,c_2}^k$ we have $C \in \mb{P}_{\delta,c_2-k}^0$, there is an isomorphism $$\mb{P}_{\delta,c_2}^k \cong \mb{P}_{\delta,c_2-k}^0\times \text{Hilb}^k(B)$$
	for every integer $0\leq k \leq 2\Delta(2,\delta,c_2)$.
\end{remark}

\subsection{Spectral Curves and Elementary Modifications}
In order to understand vector bundles with jumps, the main method is to study their \emph{elementary modifications}. Given a rank-2 vector bundle $E$ on a complex manifold $X$, a smooth effective divisor $D$, a line bundle $\lambda$ on $D$, and a surjective sheaf map $g:E|_D\to \lambda$, the elementary modification $E'$ of $E$ by $(D,\lambda)$ is the unique vector bundle satisfying the exact sequence $$\begin{tikzcd}
	0 \arrow[r] & E' \arrow[r] & E\arrow[r] & \iota_*\lambda\arrow[r] & 0,
\end{tikzcd}$$
where $\iota:D\to X$ is the inclusion map. The invariants of an elementary modification are given by \begin{align*}\det(E')=\det(E)\otimes \mc{O}_X(-D), && c_2(E')=c_2(E)+c_1(E).[D]+\iota_*c_1(\lambda).\end{align*}
In the case that $X$ is a Kodaira surface and $D$ is a prime divisor, the divisor $D$ is of the form $\pi^{-1}(b)$ for some $b \in B$. Since $D$ has torsion first Chern class, in this case the determinant and second Chern class are related by \begin{align*}
	\det(E')=\det(E)\otimes \pi^*(\mc{O}_B(-b)), && c_2(E')=c_2(E)+\deg(\lambda).
\end{align*}
If a vector bundle $E$ has a jump at $b$, there is a unique elementary modification of $E$ along $\pi^{-1}(b)$ by a negative degree bundle, called the \emph{allowable elementary modification} of $E$ at $b$ \cite[Section 4.1.2]{MorHopf}; particularly, since $E|_{\pi^{-1}(b)}$ is unstable, it is of the form $\lambda \oplus (\lambda^*\otimes \det(E)|_{\pi^{-1}(b)})$ for some $\lambda \in \Pic^{-h}(T^*)$ with $h>0$, with the map $g:E|_{\pi^{-1}(b)}\to \lambda$ given by projection onto the first coordinate. The elementary modification $E'$ then has $\mu(E',b)=\mu(E,b)+\deg(\lambda).$

\begin{proposition}[Br\^inz\u anescu--Moraru \cite{BrMor}]\label{FiniteUnstable}
	If $\mc{E}$ is a rank-$2$ torsion-free sheaf on $X$, then $\mc{E}$ has finitely many jumps, and $$\sum\limits_{b\in U_\mc{E}} \mu(\mc{E}, b)\leq 2\Delta(\mc{E}).$$
\end{proposition}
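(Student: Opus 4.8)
The plan is to reduce to the case where $\mc{E}$ is locally free and then strip off the jumps one at a time using allowable elementary modifications, tracking how the discriminant changes. Finiteness of the set of jumps requires no new work: the spectral curve $S_{\mc{E}}$ is an effective divisor on $J(X)$, so it has only finitely many irreducible components, and $U_{\mc{E}}$ is precisely the set of $b$ for which $\{b\}\times T^*$ is one of them. So the real content is the bound $\sum_{b\in U_{\mc{E}}}\mu(\mc{E},b)\le 2\Delta(\mc{E})$.

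First I would dispose of the non-locally-free case. For the natural sequence $0\to\mc{E}\to\mc{E}^{\vee\vee}\to Q\to 0$ with $Q$ of finite length $\ell$, we have $c_1(\mc{E})=c_1(\mc{E}^{\vee\vee})$ and $c_2(\mc{E})=c_2(\mc{E}^{\vee\vee})+\ell$, hence $2\Delta(\mc{E})=2\Delta(\mc{E}^{\vee\vee})+\ell$. Restricting this sequence to a fibre $\pi^{-1}(b)$, twisting by a generic $\lambda\in\Pic^0(T)$, and using that $H^1$ of a torsion sheaf on a curve vanishes while $H^0$ of a torsion sheaf equals its length, one obtains $\mu(\mc{E},b)\le\mu(\mc{E}^{\vee\vee},b)+\ell_b$, where $\ell_b$ is the length of $Q$ supported over $b$. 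Summing over $b$ gives $\sum_b\mu(\mc{E},b)\le\sum_b\mu(\mc{E}^{\vee\vee},b)+\ell$, so the inequality for $\mc{E}$ follows from the one for $\mc{E}^{\vee\vee}$, and we may assume $\mc{E}$ is a rank-$2$ vector bundle.

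Now let $\mc{E}$ be a rank-$2$ bundle with jumps at $b_1,\dots,b_k$. Because $X$ is a Kodaira surface every line bundle restricts to degree $0$ on a fibre, so $\mc{E}|_{\pi^{-1}(b_i)}$ has degree $0$; being unstable, it splits as $\lambda_i\oplus(\lambda_i^\vee\otimes\det\mc{E}|_{\pi^{-1}(b_i)})$ for some $\lambda_i$ of negative degree, and a direct computation of $h^1$ on the fibre identifies $m_i:=\mu(\mc{E},b_i)=-\deg\lambda_i$. The allowable elementary modification of $\mc{E}$ along $\pi^{-1}(b_i)$ onto $\lambda_i$ then produces a vector bundle with $\mu(\,\cdot\,,b_i)=m_i+\deg\lambda_i=0$, and since an elementary modification along $\pi^{-1}(b_i)$ is an isomorphism away from that fibre, the new bundle has the same jumps as $\mc{E}$ at all other points and no new ones. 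As $[\pi^{-1}(b_i)]$ is a torsion class, $c_1$ is unchanged and $c_2$ drops by $m_i$, so $2\Delta$ drops by exactly $m_i$. Carrying out these modifications successively at $b_1,\dots,b_k$ yields a jump-free rank-$2$ bundle $\mc{E}'$ with $2\Delta(\mc{E}')=2\Delta(\mc{E})-\sum_{i=1}^k m_i=2\Delta(\mc{E})-\sum_{b\in U_{\mc{E}}}\mu(\mc{E},b)$. Since $X$ is non-algebraic, $\Delta(\mc{E}')\ge 0$ by \cite[Theorem 4.17]{Brin}, which is exactly the desired inequality $\sum_{b\in U_{\mc{E}}}\mu(\mc{E},b)\le 2\Delta(\mc{E})$.

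The step I expect to need the most care is the claim that a single allowable modification at $b_i$ fully resolves the jump there rather than only reducing its multiplicity: this uses that an unstable rank-$2$ degree-$0$ bundle on an elliptic curve is an honest direct sum $\lambda\oplus\lambda'$ with $\deg\lambda=-\deg\lambda'=m>0$ (no nonsplit extensions occur, as the relevant $\Ext^1$ vanishes), so that the same integer $m$ is simultaneously the jump multiplicity and the drop in $c_2$ caused by the modification. The non-locally-free reduction is routine but should be written out carefully, since restricting $\mc{E}$ to a fibre over a point where it fails to be locally free introduces torsion that must be handled in the cohomology comparison.
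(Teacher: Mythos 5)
Your argument is correct and takes essentially the same route as the paper: reduce to the locally free case via the double dual $\mc{E}^{\vee\vee}$, then remove the jumps by allowable elementary modifications, each of which leaves $c_1$ unchanged and drops $c_2$ (hence $2\Delta$) by the jump multiplicity, and conclude from $\Delta\geq 0$ for bundles on non-algebraic surfaces. The only cosmetic differences are that you get finiteness directly from the divisor structure of the spectral curve and write out the fibrewise cohomology comparison in the double-dual step, where the paper instead quotes the relations $\Delta(\mc{E}^{\vee\vee})=\Delta(\mc{E})-\tfrac{m}{r}$ and $S_{\mc{E}^{\vee\vee}}=S_{\mc{E}}-\sum \{\pi^{-1}(\pi(x))\}\times T^*$.
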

\begin{proof}
	First, suppose that $\mc{E}$ is a vector bundle. If $\mc{E}|_{\pi^{-1}(b)}$ is unstable, it must split by the classification of rank-2 bundles on elliptic curves with $\mc{E}|_{\pi^{-1}(b)}=L\oplus (\det(\mc{E}|_{\pi^{-1}(b)})\otimes L^{-1}$ for some $L \in \Pic^{-k}(T)$ with $k>0$, and the allowable elementary modification $$\begin{tikzcd} 0 \arrow[r] & E \arrow[r] & \mc{E} \arrow[r] & \iota_*L \arrow[r] & 0\end{tikzcd}$$
	is a vector bundle with $c_1^2(E)=c_1^2(\mc{E})$ and $c_2(E)=c_2(\mc{E})-k$, where $\iota:\pi^{-1}(b)\to X$ is the inclusion map. Since $\Delta(E)\geq 0$, we must have $k\leq 2\Delta(\mc{E})$. We can iterate this process across all unstable fibres to see that there can only be finitely many, as all vector bundles have non-negative discriminant.
	
	We now consider the case where $\mc{E}$ is not locally free. In this case, $\mc{E}^{\vee\vee}/\mc{E}$ is a torsion sheaf supported at $m$ points with multiplicity, and $\mc{E}^{\vee\vee}$ is a vector bundle satisfying \begin{align*}
		\Delta(\mc{E}^{\vee\vee})=\Delta(\mc{E})-\frac{m}{r}, &&S_{\mc{E}^{\vee\vee}}=S_{\mc{E}}-\sum\limits_{x \in \text{Supp}(\mc{E}^{\vee\vee}/\mc{E})} \{\pi^{-1}(\pi(x))\}\times T^*.
	\end{align*}
	Since $\Delta(\mc{E}^{\vee\vee})\geq 0$, the support of $\mc{E}^{\vee\vee}/\mc{E}$ must be finite, allowing us to reduce to the first case. 
\end{proof}

By contrast, elementary modifications by positive-degree line bundles are highly non-unique; if $E|_{\pi^{-b}}\cong L\otimes (L^*\otimes \det(E)|_{\pi^{-1}(b)})$ with $L \in \Pic^h(T^*)$, $h\geq 0$ and $L\not\cong L^*\otimes \det(E)|_{\pi^{-1}(b)}$, then $E$ has an elementary modification at $b$ by $\lambda$ for every $\lambda \in \Pic^r(T^*)$ with $r\geq h$ \cite[Section 4.1.3]{MorHopf}. Two elementary modifications by $\lambda$ corresponding to maps $g_1:E|_{\pi^{-1}(b)}\to \lambda, g_2:E|_{\pi^{-1}(b)}\to \lambda$ are isomorphic if and only if there is a bundle automorphism $\varphi$ of $E$ so that $g_1=g_2\circ \varphi|_{\pi^{-1}(b)}$.

Finally, for the case of an elementary modification by a degree zero line bundle, the behaviour of the elementary modification depends on whether the initial bundle is \emph{regular}.
\begin{definition}
	A rank-2 vector bundle $E$ on $X$ is \emph{regular at $b$} for some $b \in B$ if $E|_{\pi^{-1}(b)}$ is semi-stable and not isomorphic to $\lambda \oplus \lambda$ for any $\lambda \in \Pic(T)$. Moreover, $E$ is \emph{regular} if it is regular at $b$ for every $b \in B$.
\end{definition}
The regular rank-2 vector bundles with fixed irreducible spectral curve $\overline{C}$ are classified via the following result:
\begin{proposition}[Br\^inz\u anescu--Moraru \cite{BrMoFM}]\label{prym}
	Let $\overline{C}$ be an irreducible bisection of $J(X)$ with normalization $C$. If we set $W$ to be the normalization of $X\times_B C$ with induced projections $\gamma:W\to X$ and $\psi:W\to C$, then 
	\begin{enumerate}
		\item[i.] There is a line bundle $L$ on $W$ such that $\gamma_*(L)$ is a regular rank-2 bundle on $X$ with spectral curve $\overline{C}$.
		\item[ii.] If $L_1,L_2\in \Pic(W)$ are such that $\gamma_*(L_1)$ and $\gamma_*(L_2)$ both have spectral curve $\overline{C}$, then $L_1\otimes L_2^{-1}=\psi^*(\lambda)$ for some $\lambda \in \Pic(C)$, and $\gamma_*(L_1)\cong \gamma_*(L_2)$ if and only if $L_1\cong L_2$.
	\end{enumerate} 
\end{proposition}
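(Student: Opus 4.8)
The plan is the standard spectral (Fourier--Mukai) construction, carried out through the degree-two cover $\gamma$. First record the geometry of $W$. Since $\psi\colon W\to C$ is the base change of $\pi\colon X\to B$ along $C\to B$, it is again a principal elliptic bundle; in particular $W$ is a smooth surface and $\psi$ is a proper submersion with all fibres $T$. Meanwhile $\gamma\colon W\to X$ is finite of degree two between smooth surfaces, hence flat. The normalization map $C\to\overline{C}\hookrightarrow J(X)=B\times T^*$ together with $\mathrm{id}_X$ induces $W=X\times_B C\to X\times_B J(X)$; let $\mathcal{P}_W$ be the pullback along it of the universal line bundle $\mathcal{P}$ from the definition of the spectral curve. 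The property of $\mathcal{P}$ we exploit is that for $b\in B$ and $\mu\in T^*=\Pic^0(T)$ its restriction to $\pi^{-1}(b)\times\{(b,\mu)\}$ is the degree-zero line bundle on $\pi^{-1}(b)$ classified by $\mu$ (up to the conventional sign built into $S_{\mathcal{E}}$); note also that $\psi^*N$ is trivial on every fibre of $\psi$ for any $N\in\Pic(C)$.

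For part (i), put $L:=\mathcal{P}_W\otimes\psi^*N$ with $N\in\Pic(C)$ to be chosen. As $\gamma$ is finite flat of degree two, $\mathcal{E}:=\gamma_*L$ is locally free of rank two with $\det\mathcal{E}=\mathrm{Nm}_{W/X}(L)\otimes\det(\gamma_*\mathcal{O}_W)$; since $\mathrm{Nm}_{W/X}(\psi^*N)=\pi^*\mathrm{Nm}_{C/B}(N)$, and $\overline{C}$ already determines the class of $\delta$ modulo $\pi^*\Pic(B)$ (through the involution $\iota_\delta$), $N$ can be chosen so that $\det\mathcal{E}=\delta$; here one also uses the negative-definiteness of the intersection form (Remark~\ref{tensor}). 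To compute $S_{\mathcal{E}}$, restrict to a fibre $T_b=\pi^{-1}(b)$. Away from the finite branch locus of $C\to B$ one has $\gamma^{-1}(T_b)=T_{c_1}\sqcup T_{c_2}$ with each $\gamma|_{T_{c_i}}$ an isomorphism onto $T_b$, so flat base change gives $\mathcal{E}|_{T_b}\cong L|_{T_{c_1}}\oplus L|_{T_{c_2}}$, and by the property of $\mathcal{P}$ the summand $L|_{T_{c_i}}$ is the degree-zero bundle classified by $\xi(c_i)\in T^*$, where $\xi\colon C\to T^*$ is the second component of $C\to J(X)$. Hence $H^1(T_b,\mathcal{E}|_{T_b}\otimes\mu)\neq 0$ exactly for $\mu\in\{\xi(c_1),\xi(c_2)\}$, so $S_{\mathcal{E}}$ and $\overline{C}$ agree over a dense set of $b$; being divisors, they coincide. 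For regularity: over a generic $b$ the points $\xi(c_1)\neq\xi(c_2)$ are distinct, so $\mathcal{E}|_{T_b}$ is a sum of two non-isomorphic degree-zero line bundles, and over a branch point $b$ the fibre $\gamma^{-1}(T_b)$ is a non-reduced double fibre; a base-change computation along it identifies $\mathcal{E}|_{T_b}$ with the non-split self-extension of a degree-zero line bundle (non-split because the tangent to $\overline{C}$ there points along $T^*$), which is semistable and not of the form $\mu\oplus\mu$. So $\mathcal{E}$ is regular.

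For part (ii), suppose $\gamma_*L_1$ and $\gamma_*L_2$ both have spectral curve $\overline{C}$. Restricting to generic fibres as above, the holomorphic map $c\mapsto (L_i|_{\psi^{-1}(c)})^{-1}$ from $C$ to $T^*$ agrees fibrewise over $B$ with $\xi$; as $C$ is irreducible it equals $\xi$ or $\xi\circ\sigma$ globally, where $\sigma$ is the involution of $C\to B$. Since $\gamma\circ\sigma_W=\gamma$ for $\sigma_W:=\mathrm{id}_X\times\sigma$, we have $\gamma_*(\sigma_W^*L_2)\cong\gamma_*L_2$, so after replacing $L_2$ by $\sigma_W^*L_2$ if necessary we may assume both maps equal $\xi$; then $L_1\otimes L_2^{-1}$ is trivial on every fibre of $\psi$, hence of the form $\psi^*\lambda$ with $\lambda\in\Pic(C)$, using $\psi_*\mathcal{O}_W=\mathcal{O}_C$. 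One implication of the equivalence is immediate; for the other, if $\gamma_*L_1\cong\gamma_*(L_1\otimes\psi^*\lambda)=:\mathcal{E}$, then transporting the two adjunction counits along this isomorphism gives surjections $\gamma^*\mathcal{E}\twoheadrightarrow L_1$ and $\gamma^*\mathcal{E}\twoheadrightarrow L_1\otimes\psi^*\lambda$; on each generic fibre $T_c$ both are, by regularity, the projection of $\gamma^*\mathcal{E}|_{T_c}=\xi(c)^{-1}\oplus\xi(\sigma c)^{-1}$ onto its first summand (here $(\psi^*\lambda)|_{T_c}\cong\mathcal{O}_{T_c}$), so the two surjections share their kernel subbundle; hence $L_1\cong L_1\otimes\psi^*\lambda$, forcing $\psi^*\lambda\cong\mathcal{O}_W$ and $\lambda\cong\mathcal{O}_C$, so $L_1\cong L_2$.

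\textbf{Main obstacle.} The delicate step is the base-change analysis at the non-generic fibres in (i): pinning down $\mathcal{E}|_{T_b}$ at the branch points of $C\to B$ as a \emph{non-split} self-extension — which is precisely what keeps $\mathcal{E}$ regular there — requires controlling how $\mathcal{P}_W$ restricts to the non-reduced double fibre, and it is here that the hypotheses on $\overline{C}$ (that it is an honest, in particular unibranch, bisection) enter, so that no fibre forces $\mathcal{E}|_{T_b}\cong\mu\oplus\mu$. Keeping the duality/sign conventions consistent with the definition of $S_{\mathcal{E}}$ throughout, and tracking the deck involution $\sigma_W$ in (ii), are the remaining points that demand care.
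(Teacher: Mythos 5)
This proposition is not proved in the paper at all: it is quoted from Br\^inz\u anescu--Moraru \cite{BrMoFM} (their Theorems 4.1 and 4.5), so there is no in-paper argument to compare yours against, and I can only assess your reconstruction on its own terms. Your framework (pushforward along the degree-two cover $\gamma$, fibrewise comparison with the universal bundle) is the right one, and your part (ii) is essentially sound, including your observation that the deck involution cannot be ignored: since $\gamma\circ\sigma_W=\gamma$ one always has $\gamma_*(\sigma_W^*L)\cong\gamma_*L$, so the clean ``if and only if'' can only hold after the normalization you perform (what one really proves is $L_1\cong L_2$ or $L_1\cong\sigma_W^*L_2$).

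The genuine gap is in part (i), exactly at the step you flag as the main obstacle, and it is not just a deferred computation: the step fails when $\overline{C}$ is singular. With $L=\mathcal{P}_W\otimes\psi^*N$, the classifying map $c\mapsto L|_{\psi^{-1}(c)}$ is (up to sign) the map $\xi:C\to T^*$ coming from the normalization of $\overline{C}$. First, suppose $\overline{C}$ has an ordinary node over $b$, i.e.\ there are two distinct points $c_1\neq c_2$ of $C$ above $b$ with $\xi(c_1)=\xi(c_2)=\mu$. Then $b$ is not a branch point of $\eta$, your generic-fibre computation applies verbatim, and it yields $\gamma_*L|_{\pi^{-1}(b)}\cong\mu^{-1}\oplus\mu^{-1}$, which is precisely not regular at $b$; moreover no line bundle on $W$ whose pushforward has spectral curve $\overline{C}$ can do better there, because a pushforward from the disjoint union $\psi^{-1}(c_1)\sqcup\psi^{-1}(c_2)$ can never be the non-split self-extension. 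Second, at a ramification point $c$ of $\eta$ the extension class of the pushforward of $L$ restricted to the non-reduced fibre lies in $\Ext^1(\mu,\mu)\cong H^1(T,\mc{O}_T)$ and equals the first-order Kodaira--Spencer class of the family, i.e.\ the derivative of $\xi$ at $c$; this identification is the actual content of your deferred ``base-change computation'' and needs proof, and the derivative can vanish: for a cusp of $\overline{C}$ over $b$ (local model $(\mu-\mu_0)^2=(t-b)^3$ up to shear) the normalization has $\xi'(c)=0$, the extension splits, and again regularity fails. So the justification ``non-split because the tangent to $\overline{C}$ points along $T^*$'' only makes sense at smooth points of $\overline{C}$, and as written your argument proves (i) only for smooth $\overline{C}$, whereas the statement allows an arbitrary irreducible bisection.

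The way out is not to push forward from the normalization at all, but to take the fibrewise Fourier--Mukai transform of (a twist of) a line bundle on $\overline{C}$ itself, equivalently to push forward from $X\times_B\overline{C}$: over a bad fibre the relevant spectral datum is then the non-reduced length-two scheme $\overline{C}\cap(\{b\}\times T^*)$, whose transform is the non-split self-extension, and regularity survives nodes and cusps. Two smaller points: the determinant normalization in your part (i) is unnecessary (the statement prescribes no determinant), and ``being divisors, they coincide'' should be backed by the remark that $\gamma_*L$ restricted to every fibre is an iterated extension of degree-zero line bundles, hence semistable, so $S_{\gamma_*L}$ picks up no vertical components.
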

Using the notation of the above Proposition, we also have that if $L\in \Pic(W)$ is such that $\gamma_*(L)$ has spectral curve $\overline{C}$ and $\det(\gamma_*(L))=\delta$, then for any $\lambda \in \Pic(C)$ $$\det(\gamma_*(L\otimes \psi^*(\lambda))))\cong \delta \otimes \eta_n(\lambda),$$
where $\eta:C\to B$ is the projection induced from $J(X)$, and $\eta_n:\Pic(C)\to \Pic(B)$ is the norm homomorphism of $\eta$, which is the group homomorphism defined by $$\eta_n(\mc{O}_C(p))=\mc{O}_B(\eta(p))$$
for all $p \in C$. Because of this, the regular rank-2 bundles on $X$ with determinant $\delta$ and spectral curve $\overline{C}$ are of the form $\gamma_*(L\otimes \psi^*(\lambda))$, where $\lambda \in \text{Prym}(C/B):=\ker(\eta_n)$. (See \cite[Theorem 4.5]{BrMoFM} for more details.)

An elementary modification of a vector bundle $E$ at $b$ by a degree zero bundle $\lambda$ exists if and only if $E|_{\pi^{-1}(b)}$ is an extension of $\lambda$ by another degree-zero line bundle $\lambda'$. If $E$ is regular at $b$ there is a unique surjection from $E|_{\pi^{-1}(b)}$ to $\lambda$ up to composing with an automorphism of $\lambda$, so there is a unique elementary modification. If $E$ admits an elementary modification by $\lambda$ at $b$ but $E$ is not regular at b, then $E|_{\pi^{-1}(b)}\cong \lambda\oplus \lambda$ The surjections from $E|_{\pi^{-1}(b)}$ to $\lambda$ are parameterized by $\C^2$, and since constant multiples of a surjection induce the same elementary modification, the elementary modifications of $E$ by $\lambda$ at $b$ are parameterized by $\mb{P}^1$. 
\begin{lemma}\label{regular}
	Let $E$ be a rank-2 vector bundle with spectral curve $\overline{C}$, and let $E'$ be an elementary modification
	\begin{equation}\label{reg}\begin{tikzcd} 0 \arrow[r] & E' \arrow[r] & E \arrow[r] & \iota_*\lambda \arrow[r] & 0,
	\end{tikzcd}\end{equation}
	where $\iota:\pi^{-1}(b) \to X$ is the inclusion map for some $b \in B$ and $\lambda \in \Pic^0(T)$. Then $E'$ is regular at $b$ if and only if $E$ is.
\end{lemma}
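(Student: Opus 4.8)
The plan is to work fibrewise over $b$, comparing the restrictions $E|_{\pi^{-1}(b)}$ and $E'|_{\pi^{-1}(b)}$ on the elliptic curve $T = \pi^{-1}(b)$. Regularity at $b$ is a condition purely on this restriction: $E$ is regular at $b$ iff $E|_T$ is semi-stable and not of the form $\mu\oplus\mu$. So the whole statement reduces to understanding how restricting the elementary-modification sequence \eqref{reg} to $T$ transforms a rank-2 degree-0 bundle on an elliptic curve. Restricting \eqref{reg} to $T$ and using that $\iota_*\lambda$ is already supported on $T$, I would first extract the relevant exact sequence on $T$ relating $E'|_T$, $E|_T$, and $\lambda$; one has to be careful because $E'|_T$ may differ from the naive cokernel — the $\mathcal{T}or_1$ of $\iota_*\lambda$ with $\mathcal{O}_T$ contributes a copy of $\lambda$, so the correct sequence is $0 \to \lambda \to E'|_T \to E|_T \to \lambda \to 0$, or equivalently $E'|_T$ is built from $\ker(E|_T \twoheadrightarrow \lambda)$ extended by $\lambda$. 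Both $E|_T$ and $E'|_T$ are rank-2 and degree 0 (the degree-0 elementary modification doesn't change $c_1$ of the restriction, since $\deg\lambda = 0$), so both are candidates for regular/irregular.

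The heart of the argument is then a short case analysis on the classification of rank-2 degree-0 bundles on the elliptic curve $T$: such a bundle is either (a) a nontrivial extension $F_2$ of a degree-0 line bundle by itself (the "regular semistable" type), (b) a direct sum $\mu_1\oplus\mu_2$ of distinct degree-0 line bundles (also regular), (c) $\mu\oplus\mu$ (irregular but semistable), or (d) unstable, $\mu\oplus\mu'$ with $\deg\mu > 0 > \deg\mu'$. Since $E$ has spectral curve $\overline C$ an honest bisection (not containing the whole fibre $\{b\}\times T^*$), $E|_T$ is in fact semi-stable, so case (d) does not arise for $E|_T$; I would record that an elementary modification by a degree-0 bundle $\lambda$ at $b$ exists precisely when $E|_T$ surjects onto $\lambda$, i.e.\ $E|_T$ is an extension of $\lambda$ by a degree-0 line bundle — and by semistability that second line bundle also has degree 0. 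In each of the cases (a), (b), (c), I would compute $E'|_T$ from the four-term sequence above and check directly that $E'|_T$ lands in the same regular/irregular dichotomy: if $E|_T \cong \mu\oplus\mu$ then $\lambda = \mu$ and one finds $E'|_T \cong \mu\oplus\mu$ again, while in the regular cases the extension $0\to\lambda\to E'|_T\to\ker\to 0$ with $\ker$ a degree-0 line bundle distinct-from-or-equal-to $\lambda$ cannot be $\mu\oplus\mu$ (this is where one uses that a regular $E|_T$ forces the kernel line bundle and $\lambda$ to interact so that the resulting extension is nonsplit or has distinct summands). The implication is symmetric once set up this way, because \eqref{reg} can be dualized/twisted to express $E$ as an elementary modification of $E'$ of the same type.

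The step I expect to be the main obstacle is correctly identifying $E'|_T$ — i.e.\ handling the torsion contribution when restricting \eqref{reg} to the divisor $T$ on which $\iota_*\lambda$ is supported, and then, in case (c), verifying that the modification genuinely returns $\mu\oplus\mu$ rather than, say, a nonsplit self-extension $F_2$. Concretely, when $E|_T = \mu\oplus\mu$ the surjection to $\lambda=\mu$ is, up to scalar, one of the coordinate projections, its kernel is the complementary $\mu$, and the extension defining $E'|_T$ is classified by an element of $\mathrm{Ext}^1_T(\mu,\mu)\cong\mathbb{C}$; I need to argue this class is zero. That should follow by chasing automorphisms: since $E$ itself is locally free and the modification is along a single fibre, a local computation near $T$ (or the universal/functorial description of elementary modifications along a smooth divisor) pins down $E'$ and hence $E'|_T$ up to isomorphism, and one checks it is the split bundle. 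Alternatively, one can avoid this by noting that if $E'|_T$ were the nonsplit $F_2$ then reversing the modification would have to produce $\mu\oplus\mu$ from $F_2$, which is impossible for a degree-0 modification since $F_2$ has a unique degree-0 quotient line bundle and its kernel is again that line bundle, forcing the reverse extension to be split — contradiction. Making this reversal argument airtight is the delicate point, and I would devote the bulk of the proof to it.
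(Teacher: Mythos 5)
Your reduction to the fibre is set up correctly (the $\mathcal{T}or$ term does give $0\to\lambda\to E'|_{\pi^{-1}(b)}\to\ker\bigl(E|_{\pi^{-1}(b)}\twoheadrightarrow\lambda\bigr)\to 0$, and case (b), distinct summands, is fine since the relevant $\Ext^1$ vanishes), but the argument breaks exactly at the step you flagged, and it cannot be repaired by any purely local or fibrewise computation. In the decisive cases — $E|_{\pi^{-1}(b)}$ an extension of $\lambda$ by itself, i.e.\ your (a) and (c) — the class of the extension defining $E'|_{\pi^{-1}(b)}$ is \emph{not} determined by $E|_{\pi^{-1}(b)}$ together with the surjection; it depends on first-order data of $E$ transverse to the fibre, and both outcomes genuinely occur. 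Explicitly, work on $T\times\Delta$ with coordinate $t$ and let $E=p^*A$, the pullback of the nontrivial self-extension $A$ of $\mc{O}_T$, modified along $t=0$ by the canonical quotient $A\to\mc{O}_T$: writing the transition matrices of $A$ as unipotent with off-diagonal cocycle $c_{ij}$, the modified bundle has frames $(e_1,te_2)$ and transition off-diagonal entry $tc_{ij}$, so its restriction to $t=0$ is $\mc{O}\oplus\mc{O}$. Thus a degree-zero elementary modification can take a regular restriction to $\mu\oplus\mu$, and (reversing the modification, as you note one may) an irregular restriction to a regular one. This shows both that your case-(c) claim cannot be "pinned down by a local computation near $T$" and that your fallback reversal argument is unsound — in the local model the reversal produces exactly the configuration you want to rule out. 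The lemma is simply false without a global hypothesis on the spectral curve, and your proof only uses that hypothesis to exclude unstable restrictions, which is not where the difficulty lies.

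The paper's proof uses the global input in an essential way and bypasses extension classes altogether: regularity at $b$ is re-expressed as $h^1\bigl(T,E|_{\pi^{-1}(b)}\otimes\lambda^{-1}\bigr)=1$ (as opposed to $2$), one twists the modification sequence by a line bundle $L\in\Pic^0(X)$ with constant factor of automorphy and $L|_{\pi^{-1}(b)}=\lambda^{-1}$, and pushes forward along $\pi$. The key point is the vanishing $\pi_*(E\otimes L)=\pi_*(E'\otimes L)=0$, which holds because the section of $J(X)$ determined by $L$ meets the bisection $\overline{C}$ in only finitely many points (here irreducibility/absence of fibre or section components of $\overline{C}$ is used); this collapses the direct-image sequence to $0\to\C_b\to R^1\pi_*(E'\otimes L)\to R^1\pi_*(E\otimes L)\to\C_b\to 0$, and taking stalks at $b$ gives $h^1(E'|_{\pi^{-1}(b)}\otimes\lambda^{-1})=h^1(E|_{\pi^{-1}(b)}\otimes\lambda^{-1})$, hence the equivalence. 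Note that in the local counterexample above this vanishing fails ($h^0$ is nonzero on every fibre), which is precisely why the fibrewise conclusion fails there; any correct proof must use the spectral-curve hypothesis at this point, and your proposal does not.
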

\begin{proof}
	If $b$ is of the form $\pi(c)$ where $c$ is a smooth point of $\overline{C}$, then every vector bundle with spectral curve $\overline{C}$ is regular \cite{MorHopf}, so we can restrict to the case where $\pi^{-1}(b)$ contains a singular point of $\overline{C}$. In this case, a vector bundle $V$ with spectral curve $\overline{C}$ has that $V|_{\pi^{-1}(b)}$ is an extension of $\lambda$ by itself, and  \begin{equation}\label{h1} V \text{ is regular at } b \Leftrightarrow h^1(T,V|_{\pi^{-1}(b)}\otimes \lambda^{-1})=1.\end{equation}  Let $L \in \Pic^0(X)$  be a bundle with constant factor of automorphy so that $L|_{\pi^{-1}(b)}=\lambda^{-1}$. Then taking the tensor product of the exact sequence \eqref{reg} with $L$ gives $$\begin{tikzcd}
		0\arrow[r] & E'\otimes L \arrow[r] & E\otimes L \arrow[r] & \iota_*\mc{O}_T\arrow[r] & 0,
	\end{tikzcd}$$
	and pushing forward by $\pi$ gives the long exact sequence
	$$\begin{tikzcd}
		0\arrow[r] & \pi_*(E'\otimes L) \arrow[r] & \pi_*(E\otimes L) \arrow[r] & (\pi\circ \iota)_*\mc{O}_T \arrow[lld]\\
		&R^1\pi_*(E'\otimes L) \arrow[r] & R^1\pi_*(E\otimes L) \arrow[r] & R^1(\pi\circ \iota)_*\mc{O}_T \arrow[r] & 0.
	\end{tikzcd}$$
	If $V$ is any vector bundle on $X$ so that $H^0(T,V|_{\pi^{-1}(p)})\neq 0$ for at most finitely many $p \in B$, then $\pi_*V$ is zero, and $R^1\pi_*V$ is a torsion sheaf with stalks $(R^1\pi_*V)_p\cong H^1(T,V|_{\pi^{-1}(p})$. (For details see \cite{BrMoFM}.) Furthermore, it is easy to check that since $\pi\circ \iota:\pi^{-1}(b)\to B$ is a constant map, $(\pi\circ \iota)_*\mc{O}_T\cong R^1(\pi\circ\iota)_*\mc{O}_T\cong \C_b$, where $\C_b$ is the skyscraper sheaf supported at $b$. From this, the above exact sequence reduces to $$\begin{tikzcd}
		0\arrow[r] & \C_b\arrow[r] & R^1\pi_*(E'\otimes L)\arrow[r] & R^1\pi_*(E\otimes L)\arrow[r] & \C_b \arrow[r] & 0.
	\end{tikzcd}$$
	If we now consider the exact sequence induced from the stalks at $b$, we have $$\begin{tikzcd}0 \arrow[r] & \C\arrow[r] & H^1(T,E'|_{\pi^{-1}(b)}\otimes \lambda^{-1}) \arrow[lld]\\ H^1(T,E|_{\pi^{-1}(b)}\otimes \lambda^{-1})\arrow[r] & \C\arrow[r] & 0,
	\end{tikzcd}$$
	so $h^1(T,E'|_{\pi^{-1}(b)}\otimes \lambda^{-1})= h^1(T,E|_{\pi^{-1}(b)}\otimes \lambda^{-1})$. Together with \eqref{h1}, this implies that $E'$ is regular at $b$ if and only if $E$ is. 
\end{proof}
Pairing this lemma with Proposition \ref{prym} leads to the following result:
\begin{proposition}\label{irred}
	If $\overline{C}$ is an irreducible bisection of $J(X)$, then every vector bundle with spectral curve $\overline{C}$ is regular.
\end{proposition}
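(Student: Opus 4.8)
The plan is to write an arbitrary rank-$2$ vector bundle $E$ with spectral curve $\overline{C}$ as a pushforward from $W$, link it to a regular bundle through a chain of fibrewise elementary modifications, and carry regularity along that chain using Lemma \ref{regular}. By Proposition \ref{prym}(i) there is a line bundle $L$ on $W$ with $\gamma_*(L)$ regular and of spectral curve $\overline{C}$, and by the Br\^inz\u anescu--Moraru spectral correspondence (Proposition \ref{prym}(ii) together with \cite[Theorem 4.5]{BrMoFM}) every rank-$2$ vector bundle with spectral curve $\overline{C}$ has the form $\gamma_*(L\otimes\psi^*(\lambda))$ for some $\lambda\in\Pic(C)$. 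I note also that $X\times_B C$ is already smooth (since $\pi:X\to B$ is a holomorphic fibre bundle, hence a smooth morphism), so $W=X\times_B C$, the map $\gamma$ is finite flat of degree $2$, and $\gamma_*(M)$ is a rank-$2$ vector bundle for every $M\in\Pic(W)$; and since $\psi^*(\lambda)$ is trivial on each fibre of $\psi$, the bundles $\gamma_*(L\otimes\psi^*(\lambda))$ and $\gamma_*(L)$ agree on every fibre of $\pi$, so in particular every bundle occurring below still has spectral curve $\overline{C}$ and Lemma \ref{regular} applies to it.

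The basic move is the following. For a point $p\in C$ lying over $b=\eta(p)$, pushing the sequence $0\to M\otimes\psi^*\mc{O}_C(-p)\to M\to M|_{\psi^{-1}(p)}\to 0$ on $W$ forward along the finite map $\gamma$ produces a short exact sequence $0\to\gamma_*(M\otimes\psi^*\mc{O}_C(-p))\to\gamma_*(M)\to\iota_*\mu\to 0$, where $\iota:\pi^{-1}(b)\hookrightarrow X$ is the inclusion and $\mu$ is the line bundle $M|_{\psi^{-1}(p)}$ viewed on $\pi^{-1}(b)$ through the isomorphism $\psi^{-1}(p)\cong\pi^{-1}(b)$ induced by $\gamma$; the degree of $\mu$ is $0$ because $\gamma_*(M)$ restricts to degree $0$ on every fibre of $\pi$ while all fibres of $\psi$ are algebraically equivalent. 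Hence $\gamma_*(M\otimes\psi^*\mc{O}_C(-p))$ is the elementary modification of $\gamma_*(M)$ along $\pi^{-1}(b)$ by a degree-zero line bundle, and it coincides with $\gamma_*(M)$ on every other fibre of $\pi$. By Lemma \ref{regular}, regularity at $b$ is preserved under this passage, and since the bundle is unchanged over every other point of $B$, regularity at those points is preserved as well; thus the passage $\gamma_*(M)\leftrightarrow\gamma_*(M\otimes\psi^*\mc{O}_C(-p))$, in either direction, preserves the property of being regular at every point of $B$.

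To conclude, write $\lambda\cong\mc{O}_C(D_1-D_2)$ with $D_1,D_2$ effective divisors on $C$. Then $E=\gamma_*(L\otimes\psi^*(\lambda))$ is obtained from $\gamma_*(L)$ by a finite chain of such passages: one ``$-p$'' passage for each point of $D_2$ (with multiplicity), followed by one ``$+p$'' passage for each point of $D_1$. Since $\gamma_*(L)$ is regular at every point of $B$ by Proposition \ref{prym}(i) and every passage preserves this property, $E$ is regular at every point of $B$, i.e.\ $E$ is regular. The one substantive external ingredient is the classification that realises \emph{every} vector bundle with spectral curve $\overline{C}$ as a $\gamma_*(L\otimes\psi^*(\lambda))$ — the rest is the elementary-modification bookkeeping together with Lemma \ref{regular}; and if one wishes to avoid inspecting $\gamma$ over the branch points of $\eta$, it suffices to choose the supports of $D_1$ and $D_2$ inside the locus where $\eta$ is unramified, where $\psi^{-1}(p)\to\pi^{-1}(b)$ is transparently an isomorphism.
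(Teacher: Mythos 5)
Your overall strategy—connect an arbitrary bundle with spectral curve $\overline{C}$ to the known regular bundle $\gamma_*(L)$ by a chain of fibrewise elementary modifications by degree-zero line bundles, and transport regularity along the chain via Lemma \ref{regular} (regularity at the modified fibre) together with the fact that the bundle is unchanged over all other points of $B$—is exactly the mechanism the paper uses. The problem is the reduction you base it on. You claim that Proposition \ref{prym}(ii) together with \cite[Theorem 4.5]{BrMoFM} shows that \emph{every} rank-2 vector bundle with spectral curve $\overline{C}$ is of the form $\gamma_*(L\otimes\psi^*(\lambda))$. Those results do not say this: Proposition \ref{prym}(i) produces one regular pushforward, and part (ii) only compares two bundles that are \emph{already} pushforwards of line bundles on $W$; the classification in \cite[Theorem 4.5]{BrMoFM} is a classification of the \emph{regular} bundles with spectral curve $\overline{C}$ as such pushforwards. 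A bundle that is a priori not regular is not known, at this stage, to be a pushforward at all, so your chain never reaches the general $E$. Worse, the statement you assume is essentially downstream of the proposition you are proving: it is only after Proposition \ref{irred} that the paper can say (in Section \ref{Fibres}, via \cite[Remarque 5]{ApTom}, which requires regularity at the relevant fibre) that every bundle with spectral curve $\overline{C}$ is the pushforward of a line bundle on $W$. So as written the argument is circular at its key step.

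The fix, which is what the paper does, is to use the weaker input from \cite{ApTom}: an arbitrary vector bundle $E$ with spectral curve $\overline{C}$ sits in an exact sequence exhibiting it as an elementary modification of some $\gamma_*(L_1)$, $L_1\in\Pic(W)$, by a degree-zero line bundle supported on a single fibre $\pi^{-1}(b)$. Then Proposition \ref{prym}(ii) applies to the two pushforwards $\gamma_*(L_0)$ (regular, from part (i)) and $\gamma_*(L_1)$, giving $L_0\otimes L_1^{-1}\cong\psi^*\mc{O}_C(D_0-D_1)$; your chain of twists by $\psi^*\mc{O}_C(-p)$ then shows $\gamma_*(L_1\otimes\psi^*\mc{O}_C(-D_1))\cong\gamma_*(L_0\otimes\psi^*\mc{O}_C(-D_0))$ is regular, hence so is $\gamma_*(L_1)$, and one final application of Lemma \ref{regular} across the modification $0\to E\to\gamma_*(L_1)\to\iota_*\lambda\to 0$ (with $\lambda$ of degree zero) gives regularity of $E$. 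Your bookkeeping with the chain and with Lemma \ref{regular} is fine and matches the paper; only this last reduction step needs to be replaced as above.
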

\begin{proof}
	Suppose that $E$ is a bundle with spectral curve $\overline{C}$. By \cite[Theorem 4.1]{BrMoFM}, there is a vector bundle $E_0$ with spectral curve $\overline{C}$ which is regular. $E_0$ is an elementary modification of the pushforward a line bundle on $W$ by a degree-zero bundle, so by Lemma \ref{regular} we can assume that $E_0=\gamma_*(L_0)$ for some $L_0 \in \Pic(W)$. We also have that there is some $L_1 \in \Pic(W)$ so that $E$ is an elementary modification of $\gamma_*(L_1)$ by a degree-zero bundle. Since $\gamma_*(L_0)$ and $\gamma_*(L_1)$ have the same spectral curve, there is a line bundle $V \in \Pic(C)$ so that $L_0\otimes L_1^{-1}\cong \psi^{*}(V)$, where $\psi:W\to C$ is the map induced from the fibred product \cite[Theorem 4.5]{BrMoFM}. Choose effective divisors $D_0,D_1$ on $C$ so that $V=\mc{O}_C(D_0-D_1)$. Since for any $c \in C$, the pushforward $\gamma_*(L\otimes \psi^*(\mc{O}_C(-c)))$ is an elementary modification of $\gamma_*(L)$ by a degree-zero bundle, there are sequences of elementary modifications by degree-zero bundles taking $E_0$ to $\gamma_*(L_1\otimes \psi^*(\mc{O}_C(-D_1)))$ and $E_0$ to $\gamma_*(L_0\otimes \psi^*(\mc{O}_C(-D_0)))$. Since $E_0$ is regular, we have that $\gamma_*(L_0\otimes \psi^*\mc{O}_C(-D_0)))\cong \gamma_*(L_1\otimes \psi^*(\mc{O}_C(-D_1)))$ is regular by Lemma \ref{regular}, and we similarly get that $E$ is regular since there is a chain of elementary modifications taking $\gamma_*(L_1\otimes \psi^*(\mc{O}_C(-D_1)))$ to $E$. 
\end{proof}

Since all of the rank-2 bundles with irreducible spectral curve $\overline{C}$ can be expressed as the pushforward of a line bundle on the normalization of $X\times_B C$, the bundles with determinant $\delta$ and spectral curve $\overline{C}$ are parameterized by $\text{Prym}(C/B)$ \cite[Theorem 4.5]{BrMoFM}. 

In the case of a rank-2 vector bundle $E$ with smooth and irreducible spectral curve $C$, we have $g(C)=4\Delta(E)+1$ \cite[Lemma 3.10]{BrMor}. Similar computations  give \begin{equation}p_a(S_\mc{E})=4\Delta(\mc{E})+1\label{genus}\end{equation}
for any rank-2 stably irreducible sheaf $\mc{E}$.
\begin{proposition}
	If $E$ is a rank-2 stably irreducible sheaf, and the spectral curve $S_E$ contains no jumps, then $S_E$ is smooth.
\end{proposition}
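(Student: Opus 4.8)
The plan is to compute, in two ways, the dimension near $E$ of the fibre of the graph map $\mc{G}\colon \mc{M}_{2,\delta,c_2}(X)\to\mb{P}_{\delta,c_2}$ containing $E$, where $\delta=\det(E)$ and $c_2=c_2(E)$. Since $E$ is stably irreducible it is irreducible, and since $S_E$ has no jumps it is a single bisection $\overline{C}$ of $J(X)$; the characterization of irreducibility of a rank-$2$ sheaf in terms of its spectral curve then forces $\overline{C}$ to be irreducible. Write $C$ for the normalization of $\overline{C}$ and $g$ for its genus. By \eqref{genus} we have $p_a(\overline{C})=4\Delta(2,\delta,c_2)+1$, and since $g\le p_a(\overline{C})$ with equality exactly when $\overline{C}$ is smooth, it is enough to show $g-1\ge4\Delta(2,\delta,c_2)$.

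For the lower bound I would use that $\mc{M}_{2,\delta,c_2}(X)$ has dimension $2\cdot2^2\cdot\Delta(2,\delta,c_2)=8\Delta(2,\delta,c_2)$ and contains $E$, while by Propositions \ref{Pnbundle} and \ref{spectral} the space of graphs $\mb{P}_{\delta,c_2}\cong\mb{P}(E_{\delta,c_2})$ is, when $\Delta(2,\delta,c_2)>0$, a $\mb{P}^{4\Delta(2,\delta,c_2)-1}$-bundle over the elliptic curve $B$, hence smooth of dimension $4\Delta(2,\delta,c_2)$. Because $\mb{P}_{\delta,c_2}$ is smooth at the point $\overline{C}$, that point is cut out locally by $4\Delta(2,\delta,c_2)$ equations; pulling these back through the holomorphic map $\mc{G}$ realizes $\mc{G}^{-1}(\overline{C})$ near $E$ as the common zero locus of $4\Delta(2,\delta,c_2)$ functions, so by Krull's principal ideal theorem every component of $\mc{G}^{-1}(\overline{C})$ through $E$ has dimension at least $8\Delta(2,\delta,c_2)-4\Delta(2,\delta,c_2)=4\Delta(2,\delta,c_2)$.

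To identify that fibre, note first that since $\overline{C}$ has no vertical components, every torsion-free sheaf $\mc{E}$ with spectral curve $\overline{C}$ is locally free: were it not, then as in the proof of Proposition \ref{FiniteUnstable} the spectral curve of $\mc{E}^{\vee\vee}$ would equal $S_\mc{E}$ minus a nonzero effective sum of fibres, which is impossible. By Proposition \ref{irred} every vector bundle with spectral curve $\overline{C}$ is regular, so by Proposition \ref{prym}, the determinant formula following it, and \cite[Theorem 4.5]{BrMoFM}, the bundles with determinant $\delta$ and spectral curve $\overline{C}$ are exactly the $\gamma_*(L\otimes\psi^*(\lambda))$ with $\lambda\in\text{Prym}(C/B)$, and distinct $\lambda$ give non-isomorphic bundles. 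Hence there is a bijective morphism $\text{Prym}(C/B)\to\mc{G}^{-1}(\overline{C})$, so $\mc{G}^{-1}(\overline{C})$ has dimension at most $\dim\text{Prym}(C/B)=g-g(B)=g-1$. Comparing with the previous paragraph gives $g-1\ge4\Delta(2,\delta,c_2)$, whence $g-1=4\Delta(2,\delta,c_2)$ and $\overline{C}$ is smooth. (When $\Delta(2,\delta,c_2)=0$ the dimension count is unnecessary: $\eta\colon C\to B$ is a degree-$2$ cover of an elliptic curve, so Riemann--Hurwitz gives $g\ge1=p_a(\overline{C})$ and $\overline{C}$ is smooth.)

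The essential content is the identification of the fibre with a Prym variety: it uses Proposition \ref{irred}---whose proof is where elementary modifications genuinely enter, through Lemma \ref{regular}---to rule out non-regular and non-locally-free sheaves in the fibre, together with the Br\^inz\u anescu--Moraru classification of regular bundles with a given spectral curve. The part I expect to require the most care is making sure the Prym description applies verbatim to a possibly singular bisection $\overline{C}$ (so that $C$ really is its normalization and $\gamma,\psi$ are as in Proposition \ref{prym}); everything else is bookkeeping with dimensions and the genus formula \eqref{genus}. One could also record that for $\Delta(2,\delta,c_2)>0$ the cover $\eta\colon C\to B$ must be ramified---an \'etale double cover of $B$ has genus $1$, giving a Prym of dimension $0<4\Delta(2,\delta,c_2)$, contradicting the lower bound---so the fibre is in fact irreducible, though this is not needed for the argument.
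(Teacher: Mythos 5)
Your argument is correct, and its core is the same as the paper's: identify the fibre of $\mc{G}$ over a jump-free spectral curve with $\text{Prym}(C/B)$ (via Proposition \ref{irred} and \cite[Theorem 4.5]{BrMoFM}), note $\dim\text{Prym}(C/B)=g(C)-1\leq 4\Delta(2,\delta,c_2)-1$ if $S_E$ were singular, and contradict a lower bound of $4\Delta(2,\delta,c_2)$ on the fibre dimension. The difference lies in how that lower bound is produced. The paper argues by upper semi-continuity of fibre dimension, using that the \emph{generic} spectral curve is smooth so that generic fibres are $4\Delta(2,\delta,c_2)$-dimensional Pryms; you instead get the bound locally at $E$ from the Krull/active-lemma estimate, using that $\mb{P}_{\delta,c_2}$ is smooth of dimension $4\Delta(2,\delta,c_2)$ (Proposition \ref{spectral}) and that $\mc{M}_{2,\delta,c_2}(X)$ has pure dimension $8\Delta(2,\delta,c_2)$ at $E$. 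Your version is somewhat more self-contained: it does not invoke the assertion that a general stably irreducible sheaf has smooth spectral curve (nor surjectivity of $\mc{G}$), which in the paper is stated without proof, while the paper's version avoids explicit local analytic dimension theory. Your separate treatment of $\Delta(2,\delta,c_2)=0$ via Riemann--Hurwitz, and the observation that every sheaf in the fibre is locally free because a non-locally-free sheaf forces vertical components in its spectral curve, are both sound and consistent with what the paper uses implicitly.
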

\begin{remark}
	For the case of $\Delta(2,\delta,c_2)=\frac{1}{4}$, this result is given in \cite[Corollary 4.4]{ApMorTom}.
\end{remark}
\begin{proof}
	Let $E$ be a rank-2 stably irreducible sheaf in $\mc{M}_{2,\delta,c_2}(X)$, and suppose for a contradiction that $S_E$ consists of a singular irreducible bisection with no jumps. Let $C$ be the normalization of $S_E$. Then $g(C)\leq 4\Delta(E)$ by \eqref{genus}, so $\dim(\mc{G}^{-1}(S_E))=\dim\text{Prym}(C/B)\leq4\Delta(E)-1$. Now take $E'$ to be any sheaf in $\mc{M}_{2,\delta,c_2}(X)$ whose spectral curve is smooth. Since the arithmetic genus of a spectral curve depends only on $\Delta(2,c_1(E),c_2(E))$, we have $g(S_{E'})=\rho_a(S_E)=4\Delta(E)+1$ from \cite[Lemma 3.10]{BrMor}. We assumed $S_{E'}$ is smooth, so we have $\dim(\mc{G}^{-1}(S_{E'}))=\dim\text{Prym}(S_{E'}/B)=4\Delta(E)$. Since a general stably irreducible sheaf has smooth spectral curve, the fibres of the graph map $\mc{G}:\mc{M}_{2,\delta,c_2}\to \mb{P}_{\delta,c_2}$ have dimension $4\Delta(E)$ outside a proper Zariski-closed subset of $\mb{P}_{\delta,c_2}$. This is a contradiction since the fibre dimension of a holomorphic map is upper semi-continuous, so $S_E$ is smooth when it has no jumps. 
\end{proof}

In the next section, we compute the fibres of the graph map in order to study the structure of $\mc{M}_{2,\delta,c_2}(X)$.

\section{Fibres of the Graph Map}\label{Fibres}
In this section, we attempt to parameterize the space $\mc{G}^{-1}(G)$ of torsion-free sheaves on $X$ with determinant $\delta$ and spectral curve $G$, where $G$ is of the form $\overline{C}+\sum\limits_{i=1}^k \{b_i\} \times T^*$ with $b_i \in B$ for each $i$, and where $\overline{C}$ is an irreducible bisection of $J(X)$ with normalization $C$. 

In the case where $G=\overline{C}$, every vector bundle  $E$ with spectral curve $\overline{C}$ can be described as an elementary modification \begin{equation}\begin{tikzcd}
		0 \arrow[r] & E \arrow[r] & \gamma_*L \arrow[r] & \iota_*\lambda \arrow[r] & 0
	\end{tikzcd}\label{deg0}\end{equation}
for some $b \in B$, $\lambda\in \Pic^0(\pi^{-1}(b))$, $L \in \Pic(W)$, where $\iota:\pi^{-1}(b)\to X$ is the inclusion, $W$ is the normalisation of $X\times_B C$, and $\gamma:W\to X$ the map coming from the fibred product \cite{ApTom}. Since $\gamma_*L|_{b}$ is a regular bundle by Proposition \ref{irred}, every elementary modification of the form \eqref{deg0} is itself the pushforward of a line bundle on $W$ \cite[Remarque 5]{ApTom}. We then have that $\mc{G}^{-1}(\overline{C})\cong \text{Prym}(C/B)$ \cite[Theorem 4.5]{BrMoFM}. Note that each connected component of $\text{Prym}(C/B)$ has dimension $g(C)-1$, and when $C$ is an unramified cover of degree $2$, $\text{Prym}(C/B)$ is the two-element group.

For the next simplest case, where $G=\overline{C}+\{b\}\times T^*$ and $\overline{C}\cong C$, we can compute the fibres of the graph map as in the following propositions:
\begin{proposition}\label{singular}
	Take $\delta \in \Pic(X)$, $b \in B$, and let $C\subset J(X)$ be a smooth and irreducible $\iota_\delta$-invariant bisection. Then the torsion-free sheaves with determinant $\delta$ and spectral curve $C+\{b\}\times T^*$ are parameterized by a union of two $\mb{P}^1$-bundles over $\text{Prym}(C/B)\times T$ that intersect along $|C\cap \{b\}\times T^*|$ sections.
\end{proposition}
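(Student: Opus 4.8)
The plan is to stratify the fibre $\mc{G}^{-1}(C+\{b\}\times T^*)$ into the locus $\mc{U}$ of locally free sheaves and its complement $\mc{N}$ of non-locally-free torsion-free sheaves, to identify each with (a Zariski-open subset of) a $\mb{P}^1$-bundle over $\text{Prym}(C/B)\times T$, and finally to glue these two descriptions along the sections coming from $C\cap(\{b\}\times T^*)$.

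\emph{The non-locally-free locus.} First I would show that every $\mc{E}\in\mc{N}$ has double dual a vector bundle $\mc{E}^{\vee\vee}$ with spectral curve exactly $C$ and $\det\mc{E}^{\vee\vee}=\delta$, with $\mc{E}^{\vee\vee}/\mc{E}\cong\C_x$ for a single point $x\in\pi^{-1}(b)$. This is forced: from the proof of Proposition \ref{FiniteUnstable}, $\mc{E}^{\vee\vee}/\mc{E}$ is $0$-dimensional, $\Delta(\mc{E}^{\vee\vee})=\Delta(\mc{E})-\tfrac12\operatorname{length}(\mc{E}^{\vee\vee}/\mc{E})$, and $S_{\mc{E}^{\vee\vee}}=S_{\mc{E}}-\sum_{x}\{\pi^{-1}(\pi(x))\}\times T^*$; since $C$ is an irreducible bisection it contains no fibre, so every $x$ in the support lies over $b$, and since the jump $\{b\}\times T^*$ occurs in $S_{\mc{E}}$ with multiplicity one the length must equal one. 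Conversely $\mc{E}$ is recovered from $\mc{E}^{\vee\vee}$, the point $x$, and a surjection $\mc{E}^{\vee\vee}|_x\twoheadrightarrow\C$ taken up to scalar. As $\mc{E}^{\vee\vee}$ ranges over $\mc{G}^{-1}(C)\cong\text{Prym}(C/B)$ (Proposition \ref{prym} and the discussion after it), $x$ over $\pi^{-1}(b)\cong T$, and the quotient over $\mb{P}((\mc{E}^{\vee\vee}|_x)^\vee)\cong\mb{P}^1$, this identifies $\mc{N}$ with the relative length-one Quot scheme, a $\mb{P}^1$-bundle $\mb{P}(\mc{V})$ over $\text{Prym}(C/B)\times T$.

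\emph{The locally free locus.} For a vector bundle $E$ with spectral curve $C+\{b\}\times T^*$ and $\det E=\delta$, take the allowable elementary modification at $b$: a sequence $0\to E'\to E\to\iota_*\lambda\to 0$ with $\lambda\in\Pic^{-1}(\pi^{-1}(b))$ and $E'$ a vector bundle having no jump at $b$, hence with spectral curve $C$ (it agrees with $S_E$ away from $\pi^{-1}(b)$ and has no fibre component at $b$) and determinant $\delta\otimes\pi^*\mc{O}_B(-b)$; by Proposition \ref{irred}, $E'$ is regular. Conversely, given $E'$ and $\lambda$, the sheaves fitting in such a sequence are governed by $\Ext^1_X(\iota_*\lambda,E')$, which by the local-to-global spectral sequence and triviality of the normal bundle of $\pi^{-1}(b)$ equals $H^0\!\left(\pi^{-1}(b),\lambda^{-1}\otimes E'|_{\pi^{-1}(b)}\right)$, a two-dimensional space ($H^1$ vanishes since $\lambda^{-1}\otimes E'|_{\pi^{-1}(b)}$ is semistable of slope $1$), and the extension is locally free exactly when the corresponding section of $\lambda^{-1}\otimes E'|_{\pi^{-1}(b)}$ is nowhere vanishing. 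Because $E'$ is regular and $C$ is $\iota_\delta$-invariant, the bundle $E'|_{\pi^{-1}(b)}$ is determined by $C\cap(\{b\}\times T^*)$ — a sum of two distinct degree-zero line bundles when $b$ is not a ramification point, the unique non-split self-extension of a degree-zero line bundle when it is — so $\lambda^{-1}\otimes E'|_{\pi^{-1}(b)}$ is a degree-two bundle exactly $|C\cap(\{b\}\times T^*)|$ of whose points of $\mb{P}(\Ext^1_X(\iota_*\lambda,E'))\cong\mb{P}^1$ are represented by sections with a zero. Running $(E',\lambda)$ over $\text{Prym}(C/B)\times\Pic^{-1}(\pi^{-1}(b))\cong\text{Prym}(C/B)\times T$, these assemble into a $\mb{P}^1$-bundle $\mb{P}(\mc{W})$ over $\text{Prym}(C/B)\times T$, inside which $\mc{U}$ is the complement of $m:=|C\cap(\{b\}\times T^*)|$ disjoint sections $\sigma_1,\dots,\sigma_m$.

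\emph{Gluing.} The last step, which I expect to be the main obstacle, is to identify the missing sections $\sigma_i$ of $\mb{P}(\mc{W})$ with sections of $\mb{P}(\mc{V})$. When the extension class acquires a simple zero at $p\in\pi^{-1}(b)$, the resulting $E$ is torsion-free but not locally free with $E^{\vee\vee}/E\cong\C_p$; a Chern character computation gives $\det E=\delta$, $c_2(E)=c_2$, and the sequence $0\to E'\to E^{\vee\vee}\to\iota_*(\lambda(p))\to 0$ exhibits $E^{\vee\vee}$ as a degree-zero elementary modification of $E'$, hence (by Lemma \ref{regular} and the argument used for spectral curve $\overline{C}$) a pushforward $\gamma_*L''$ with spectral curve $C$. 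Thus $E\in\mc{N}\cap\mb{P}(\mc{W})$, and tracking the associated data $(E^{\vee\vee},p,[\cdot])$ shows the $\sigma_i$ are precisely $m$ sections of $\mb{P}(\mc{V})$. Since every torsion-free sheaf in the fibre is either locally free or not, this yields $\mc{G}^{-1}(C+\{b\}\times T^*)=\mb{P}(\mc{W})\cup\mb{P}(\mc{V})$ with intersection equal to these $m$ sections, proving the proposition. The delicate points of this last step are: controlling the sheaf $E^{\vee\vee}/E'$ (ruling out an embedded point so it is genuinely $\iota_*$ of a degree-zero line bundle on $\pi^{-1}(b)$), pinning down exactly which section of $\mb{P}(\mc{V})$ a degenerate extension lands on, and carrying the ramified case $m=1$ through on the same footing.
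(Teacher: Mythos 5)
Your proposal is correct and follows the same overall strategy as the paper: split the fibre into the non-locally-free locus (identified with the relative length-one Quot scheme $\mb{P}(\mc{E}^{\vee\vee})$ over $\text{Prym}(C/B)\times T$, exactly as in the paper) and the locally free locus (parameterized over $\text{Prym}(C/B)\times T$ by a $\mb{P}^1$ with $|C\cap \{b\}\times T^*|$ points removed). The differences are in bookkeeping and in the gluing. For the locally free part you classify extensions of $\iota_*\lambda$ ($\deg\lambda=-1$) by the allowable modification $E'$, computing $\Ext^1_X(\iota_*\lambda,E')\cong H^0(T,\lambda^{-1}\otimes E'|_{\pi^{-1}(b)})\cong\C^2$ and cutting out the nowhere-vanishing classes; the paper instead fixes a degree $+1$ line bundle $L$ on the fibre and uses surjections in $\Hom(\tilde{E},j_*L)\cong\C^2$, discarding the non-surjective ones --- these are the same two-dimensional space seen from dual ends, and both find the excluded locus to be $1$ or $2$ points of $\mb{P}^1$ according to whether $b$ is a branch point of $C\to B$. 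For the intersection of the two components, the paper's proof of this proposition is deliberately cheap: it only invokes compactness of the fibre of $\mc{G}$ to force the closure of the locally free locus to meet the Quot component along the predicted sections, and defers the explicit identification of the limit sheaves to the proposition immediately following. You instead carry out that identification inside the proof, by letting the extension class acquire a zero at $p$ and checking that the resulting sheaf is torsion-free, non-locally-free, with double dual a degree-zero elementary modification of $E'$ (hence in $\text{Prym}(C/B)$ by Lemma \ref{regular}/Proposition \ref{irred}); this is essentially the content of the paper's next proposition, so your route buys an explicit description of the intersection at the cost of the verifications you flag (no embedded points in $E^{\vee\vee}/E'$, and pinning down which section of the Quot component each degenerate class lands on), while the paper's compactness shortcut is quicker but less explicit at this stage.
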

\begin{remark}
	Since $C$ is a bisection of $\text{pr}_1:B\times T^*\to B$, $C\cap \{b\}\times T^*$ will always contain one or two points.
\end{remark}
\begin{proof}
	The fibre of the graph map above a spectral curve of this type can be decomposed into two irreducible components, with one component containing the non-locally free sheaves, and the other component containing the vector bundles. If $E$ is a vector bundle with determinant $\delta$ and spectral curve $C+\{b\}\times T^*$, then $E|_{\pi^{-1}(b)}$ is a degree zero vector bundle so that $h^1(T,E|_{\pi^{-1}(b)}\otimes \lambda)=1$ for every $\lambda \in T^*$ by Proposition \ref{FiniteUnstable}, so there is a line bundle $L \in \Pic^1(T)$ such that $E|_{\pi^{-1}(b)}\cong L\oplus \delta\otimes L^{-1}$. There is a unique choice of a vector bundle $\tilde{E}$ so that $\det(\tilde{E})=\delta\otimes \pi^* \mc{O}_B(-b)$, $c_2(\tilde{E})=c_2(E)-1$, and
	$$\begin{tikzcd}
		0 \arrow[r] &\tilde{E} \arrow[r] & E \arrow[r] & j_*(\delta_b\otimes L^{-1})\arrow[r] &0
	\end{tikzcd}$$
	is an exact sequence, where $j:\pi^{-1}(b)\to X$ is the inclusion map. In particular, $\tilde{E}$ has spectral curve $C$. We then have a well-defined projection $E\mapsto (\tilde{E}, L)\in \mc{G}^{-1}(C)\times \Pic^1(T)=\text{Prym}(C/B)\times T$. To determine the fibres of this map, we notice that for any choice of bundle $\tilde{E}$ with determinant $\delta\otimes \pi^*\mc{O}_B(b)$ and spectral curve $C$, $L \in \Pic^1(T)$, and  surjective map $f:\tilde{E}\to j_*L$, there is a vector bundle $E$ with determinant $\delta$ and spectral curve $C+\{b\}\times T^*$ 
	such that $E\otimes \pi^*\mc{O}_B(-b)=\ker(f)$. 
	
	Since $C$ is smooth and irreducible, either $$\tilde{E}|_{\pi^{-1}(b)}\cong \lambda\oplus (\delta|_{\pi^{-1}(b)}\otimes \lambda^{-1})$$ for some $\lambda \in \Pic^0(T)$ with $\lambda^{\otimes 2}\neq \delta|_{\pi^{-1}(b)}$, or $$\tilde{E}|_{\pi^{-1}(b)}\cong \lambda_0\otimes A,$$ where $A$ is the unique extension of $\mc{O}_T$ by $\mc{O}_T$ and $\lambda_0 \in \Pic^0(T)$ such that $\lambda_0^{\otimes 2}=\delta|_{\pi^{-1}(b)}$. In both cases, $\text{Hom}(\tilde{E}, j_*L)\cong \C^2$, and the non-surjective maps correspond to $|\{b\}\times T^* \cap C|$ 1-dimensional subspaces. Thus the vector bundles with determinant $\delta$ and spectral curve $C+\{b\}\times T^*$ are parameterized by a fibre bundle with base $\text{Prym}(C/B)\times T$ and fibre $\C$ or $\C^*$.
	
	If $\mc{E}$ is a non-locally free sheaf with determinant $\delta$ and spectral curve $C+\{b\}\times T^*$, then since the double dual of a torsion-free sheaf on a surface is locally free, $\mc{E}^{\vee\vee}$ is a vector bundle with determinant $\delta$ and spectral curve $C$. Since $\mc{E}$ has exactly one singularity, $\mc{E}^{\vee\vee}/\mc{E}$ is a torsion sheaf supported at a point $x \in \pi^{-1}(b)$. This gives a well-defined projection $\mc{E}\mapsto (\mc{E}^{\vee\vee}, \text{supp}(\mc{E}^{\vee\vee}/\mc{E})) \in \text{Prym}(C/B)\times T$. Since for any vector bundle $E$ with spectral curve $C$ and determinant $\delta$ the sheaves $\mc{E}$ with $\mc{E}^{\vee\vee}=E$ which are singular at a point are parameterized by $\text{Quot}(E,1)=\mb{P}(E)$ with the projection map sending $\mc{E}$ to $\text{supp}(E/\mc{E})$ \cite{EilLehn}, the non-locally free sheaves with determinant $\delta$ and spectral curve $\{b\}\times T^*+C$ are a $\mb{P}^1$-bundle with base $\text{Prym}(C/B)\times T$.
	
	As the union of these two components is the fibre of a holomorphic map between two compact spaces, the union of the components must be compact, and therefore the closure of the locally free component intersects with the non-locally free component along $|C\cap \{b\}\times T^*|$ sections. 
\end{proof}

In the previous proposition, we showed that for a spectral curve with a single jump, the fibre of the graph map decomposes into two irreducible components. The following result allows us to explicitly compute the intersection locus of these two components.

\begin{proposition}
	In the context of Proposition \ref{singular}, the intersection of the irreducible components of the fibre consists of sheaves of the form $\ker(f\oplus g)$ for $E$ a vector bundle with spectral curve $C$ and determinant $\delta\otimes \pi^*\mc{O}_B(b)$, $f:E\to j_*\lambda$ non-zero such that $(b,\lambda) \in C$,
	and $g: E\to \mc{O}_x$ with $x \in \pi^{-1}(b)$ such that $g\circ \ker(f)\neq 0$. Furthermore, these sheaves are determined up to isomorphism by $(E,\lambda,x)$.
\end{proposition}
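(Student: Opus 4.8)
The plan is to realise every sheaf in the intersection as a flat limit of locally free sheaves lying in the same fibre, read off its structure from that degeneration, and conversely produce each $\ker(f\oplus g)$ as such a limit; the uniqueness assertion then follows from the regularity of $E$ together with one short cohomology computation. I use the description of the fibre from the proof of Proposition~\ref{singular}: write $\mc N$ for the component of non-locally free sheaves, whose point over $(E_1,x)\in\text{Prym}(C/B)\times\pi^{-1}(b)$ is $\ker(E_1\twoheadrightarrow\mc O_x)$ (the surjection unique up to scalar), with $E_1$ a vector bundle of spectral curve $C$ and determinant $\delta$; and $\overline{\mc U}$ for the other component, whose open part consists of the elementary modifications $\ker(f\colon E\to j_*L)$ with $f$ surjective, $L\in\Pic^1(\pi^{-1}(b))$, and $E$ running over the compact family (Proposition~\ref{prym} and the discussion after it) of vector bundles of spectral curve $C$ and determinant $\delta\otimes\pi^*\mc O_B(b)$.

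For $\overline{\mc U}\cap\mc N\subseteq\{\ker(f\oplus g)\}$, take $\mc E$ in the intersection and a flat family $\{E'_t\}$ over a disk with $E'_t\in\mc U$ for $t\neq 0$ and $E'_0=\mc E$. Write $E'_t=\ker(f_t\colon E_t\to j_*L_t)$; since the underlying bundles $E_t$ and the line bundles $L_t\in\Pic^1(\pi^{-1}(b))$ vary in proper families, after a finite base change the data extends across $t=0$, giving $E:=E_0$, $L:=L_0$ and $f_0:=\lim_t f_t\colon E\to j_*L$. If $f_0$ were surjective then $\ker f_0$, and hence $\mc E$, would be locally free, contradicting $\mc E\in\mc N$; since $\bar f_0$ is a nonzero map from the semistable slope-$0$ bundle $E|_{\pi^{-1}(b)}$, its image $j_*\lambda$ has $\lambda$ of degree $0$ and $L/\lambda\cong\mc O_x$ for a single $x\in\pi^{-1}(b)$ (one point, since $\mc E$ has a single singularity), and then $(b,\lambda)\in S_E=C$ because $\lambda$ is a quotient of $E|_{\pi^{-1}(b)}$. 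Now $\ker f_0=E_1$ has strictly larger Hilbert polynomial than $E'_t$, so the flat limit of the kernels $\ker f_t$ is a proper length-$1$ subsheaf of $E_1$; a local computation at $x$ identifies it as the subsheaf cut out by the order-$t$ term of $\bar f_t$ at $x$, so $\mc E=\ker\bigl(f\oplus g\colon E\to j_*\lambda\oplus\mc O_x\bigr)$ with $f$ the corestriction of $f_0$ and $g\colon E\to\mc O_x$ that order-$t$ term; finally $g\circ\ker(f)\neq 0$, as otherwise $\ker(f\oplus g)=\ker f$ would be locally free. This gives the asserted form, with $\mc E^{\vee\vee}=\ker f=E_1$.

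For the reverse inclusion, run this backwards. Given $(E,\lambda,x)$, a nonzero (hence surjective) $f\colon E\to j_*\lambda$, and $g\colon E\to\mc O_x$ with $g\circ\ker(f)\neq 0$, set $L:=\lambda(x)\in\Pic^1(\pi^{-1}(b))$ and choose a one-parameter family $f_t\colon E\to j_*L$ of surjections with $\bar f_0$ of image $\lambda$ and order-$t$ term $g$ at $x$ (e.g.\ $\bar f_t=\iota\circ\bar f+t\sigma$ for $\iota\colon\lambda\hookrightarrow L$ and $\sigma$ a local lift of $g$). For $t\neq 0$ the bundle $\ker(f_t)$ is locally free with spectral curve $C+\{b\}\times T^*$ and determinant $\delta$ (Proposition~\ref{singular}), hence lies in $\mc U$, and by the computation above its flat limit is $\ker(f\oplus g)$; so $\ker(f\oplus g)\in\overline{\mc U}$. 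It also lies in $\mc N$, since $\ker(f\oplus g)=\ker\bigl(g|_{\ker f}\colon\ker f\to\mc O_x\bigr)$ and $\ker f$ is a vector bundle of determinant $\delta$ whose spectral curve is again $C$ (an elementary modification by a degree-$0$ bundle keeps a bundle of spectral curve $C$ semistable on $\pi^{-1}(b)$, by Proposition~\ref{irred} and the argument of Lemma~\ref{regular}). For the uniqueness claim: $E$ has spectral curve $C$, hence is regular at $b$ (Proposition~\ref{irred}), so $\Hom(E|_{\pi^{-1}(b)},\lambda)$ is one-dimensional, $f$ is unique up to scalar, and $E_1=\ker f$ depends only on $(E,\lambda)$; and from $0\to E_1\to E\to j_*\lambda\to 0$ the map $\Hom(E,\mc O_x)\to\Hom(E_1,\mc O_x)$ has kernel $\Hom(j_*\lambda,\mc O_x)\cong\C$, hence one-dimensional image, so every admissible $g$ has the same restriction to $E_1$ up to scalar and $\ker(f\oplus g)=\ker(g|_{E_1})$ does not depend on the choices. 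Thus $\mc E$ is determined up to isomorphism by $(E,\lambda,x)$, and conversely $(E_1,x)=(\mc E^{\vee\vee},\operatorname{supp}(\mc E^{\vee\vee}/\mc E))$ is recovered from $\mc E$.

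The main obstacle is the flat-limit identification in the second paragraph: showing that as $f_t$ degenerates the flat limit of $\ker(f_t)$ inside $E_1$ is exactly the length-$1$ subsheaf of $E_1$ cut out at the rank-drop point $x$ by the order-$t$ term of $\bar f_t$. This is local at $x$ — in coordinates it is a computation with a matrix of holomorphic functions, or an application of the standard description of flat limits of subsheaves through the Quot scheme — and one must in addition keep track of the two possibilities, two points or one ramification point, for $C\cap\{b\}\times T^*$, which is what produces the two families of sections appearing in Proposition~\ref{singular}.
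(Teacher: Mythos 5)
Your core strategy is the same as the paper's: describe the intersection as flat limits of the locally free elementary modifications $\ker(E\to j_*L)$ as the degree-one line bundle $L$ degenerates to $\lambda\oplus\mc{O}_x$, i.e.\ as its extension class in $\Ext^1(\mc{O}_x,\lambda)$ goes to $0$. Your reverse inclusion (the pencil $\bar f_t=\iota\circ\bar f+t\sigma$) is essentially the paper's construction, which deforms over $\Ext^1(\mc{O}_p,\lambda)$ with maps $f_s=\vp\circ\alpha$, $g_s=th$ and reads the limit $\ker(f_0\oplus g_0)$ off directly. Your uniqueness argument is a genuine improvement in clarity: regularity of $E$ at $b$ (Proposition \ref{irred}) makes $f$ unique up to scalar, and the one-dimensionality of the image of $\Hom(E,\mc{O}_x)\to\Hom(E_1,\mc{O}_x)$ (kernel $\Hom(j_*\lambda,\mc{O}_x)\cong\C$ inside $\C^2$) shows $\ker(f\oplus g)=\ker(g|_{E_1})$ is independent of the admissible $g$; the paper instead compares $\ker(\alpha|_x)$ and $\ker(\beta|_x)$ pointwise. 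Your forward inclusion (extend $(E_t,L_t,f_t)$ across $t=0$ using properness of $\text{Prym}(C/B)$ and $\Pic^1(T)$, note $f_0$ cannot be surjective, identify its image as $j_*\lambda$ with $(b,\lambda)\in C$) makes explicit a step the paper only asserts, which is worthwhile.

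Two points need attention. First, the step you yourself flag as the ``main obstacle'' --- that the flat limit of $\ker f_t$ inside $E_1=\ker f_0$ is exactly $\ker(f\oplus g)$ with $g$ the order-$t$ term at $x$ --- is the heart of the proposition and is left as a sketch; this is precisely what the paper's choice of family renders transparent, since there the quotients form a flat family degenerating to $j_*\lambda\oplus\mc{O}_x$ and the limit map $(\iota_1\circ\alpha)+(\iota_2\circ\beta)$ is still surjective because $\ker(\alpha|_x)\neq\ker(\beta|_x)$, so the limit of the kernels is the kernel of the limit. A clean way to close your version is to take the flat limit $Q_0$ of the quotients $E\to j_*L_t$ in the Quot scheme of $E$: it has the same Hilbert polynomial, surjects onto $j_*\lambda=\mathrm{im}(f_0)$ with a length-one piece supported where $\lambda\hookrightarrow L$ drops rank, and non-local-freeness of $\mc{E}$ excludes $Q_0$ being $j_*$ of a degree-one line bundle, forcing $Q_0\cong j_*\lambda\oplus\mc{O}_x$. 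Second, for a given admissible $g$ a global lift $\sigma\in\Hom(E|_{\pi^{-1}(b)},L)$ inducing $g$ at $x$ need not exist: the evaluation map $\Hom(E|_{\pi^{-1}(b)},L)\to\Hom(E|_x,L|_x)$ has only one-dimensional image. This is harmless, but only because of your uniqueness step: run the pencil with $\sigma=h$ an arbitrary surjection and $g_0=\eta\circ h$ (the paper's choice), check that this $g_0$ is admissible, and then conclude $\ker(f\oplus g)\cong\ker(f\oplus g_0)$ for every admissible $g$; as written, the phrase ``local lift of $g$'' hides this substitution.
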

\begin{proof}
	Since the non-locally free component in Proposition \ref{singular} is compact, we can compute the intersection by finding the limits of families of deformations within the locally free component which are not locally free. Every vector bundle in the locally free component is given by an elementary modification $$\begin{tikzcd}
		0 \arrow[r] & \tilde{E} \arrow[r] & E \arrow[r] & j_*L \arrow[r]  & 0
	\end{tikzcd},$$
	where $E$ is a vector bundle with spectral curve $C$ and determinant $\delta\otimes \pi^*\mc{O}_B(b)$, and $L \in \Pic^1(B)$. Fix a choice of $E$ and $L$. For any $\lambda$ such that $(b,\lambda) \in C$, there is a surjective sheaf map $\alpha:E\to j_*\lambda$, which is unique up to multiplication by a scalar. Let $p$ be the unique point in $T$ such that $L$ is an extension in $\Ext^1(\mc{O}_p, \lambda)$, and set $x=j(p)$. If we now choose sheaf maps $h:E\to j_*L$ and $\beta:E\to \mc{O}_{x}$ so that $h$ is surjective and $\beta\circ \ker(\alpha)\neq 0$, we now have that for any map $\eta:L\to \mc{O}_p$ that there is a unique $t \in \C$ satisfying $t(\eta\circ h)=\beta$. Using these data, we can construct a deformation over $\Ext^1(\mc{O}_p, \lambda)$ whose non-zero elements are given as follows:
	
	Given an extension $$\begin{tikzcd}0 \arrow[r] & \lambda \arrow[r, "\vp"] & L \arrow[r, "\psi"] & \mc{O}_p \arrow[r] &0\end{tikzcd}$$ corresponding to a non-zero $s  \in \Ext^1(\mc{O}_p, \lambda)$, define the maps $f_s=\vp\circ \alpha$ and $g_s=th$, where $t$ is chosen so $t(\psi\circ h)=\beta$, and set $\tilde{E}_s=\ker(f_s+g_s).$ Note that any other extension corresponding to $s$ will be given by maps $z\vp$ and $\frac{1}{z}\psi$ for some $z \in \C^*$, giving $f_s=z\vp\circ \alpha$ and $g_s=tzh$, so $(f_s+g_s)$ is unique up to multiplication by a scalar, and $\tilde{E}_s$ is well-defined. As $s$ goes to zero, these maps become $f_0=\iota_1\circ \alpha$ and $g_0=\iota_2\circ \beta$, where $\iota_i$ are the co-product maps for $j_*\lambda\oplus \mc{O}_{x}$. This gives that $\tilde{E}_0$ is of the desired form. 
	
	We now show that $\tilde{E}_0$ is independent of the choice of $\beta$. Clearly, $\ker(f_0+g_0)=\ker(\alpha)\cap \ker(\beta)$. Since $\beta$ vanishes away from $x$, $\tilde{E}_0$ may only depend on $\beta$ in the fibre above $x$. The linear maps $\alpha|_{x}$ and $\beta|_{x}$ both have rank one, so either $\ker(\alpha|_{x})=\ker(\beta|_{x})$ or $\ker(\alpha|_{x})\cap \ker(\beta|_{x})=0$. But $\ker(\alpha|_{x})\neq \ker(\beta|_{x})$, since $\beta\circ \ker\alpha\neq 0$, meaning that all the sections of $\tilde{E}_0$ vanish at $x$. Thus $\tilde{E}_0$ does not depend on $\beta$. 
\end{proof}

\section{Applications}\label{Applications}
In this section, we use Proposition \ref{spectral} and the results of section 4 to prove some results about the fundamental groups of moduli spaces of rank-2 stably irreducible sheaves, as well as compute explicit data about the graph map fibration in the cases where the dimension of the moduli space is at most 6. For this section, the invariants $\delta,c_2$ are assumed to be such that a rank-2 sheaf $\mc{E}$ is stably irreducible whenever $\det(\mc{E})=\delta$ and $c_2(\mc{E})=c_2$.

\subsection{The Topology of the Moduli Spaces}
The case of $\Delta(2,\delta,c_2)=\frac{1}{4}$ was previously studied in \cite{ApMorTom} leading to the following result:
\begin{proposition}[Aprodu--Moraru--Toma]\label{2dim}
	Let $\delta \in \Pic(X)$ and $c_2 \in \Z$ be such that $\mc{M}_{2,\delta,c_2}(X)$ is 2-dimensional and $t(2,\delta)>\frac{1}{4}$. Then $\mc{M}_{2,\delta,c_2}(X)$ is a primary Kodaira surface with the same fibre as $X$, and their N\'eron--Severi groups satisfy the relation $$\ord(NS(X))|\ord(NS(\mc{M}_{2,\delta,c_2}(X))).$$
\end{proposition}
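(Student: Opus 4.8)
The plan is to study $\mc{M}:=\mc{M}_{2,\delta,c_2}(X)$ through the graph map $\mc{G}\colon\mc{M}\to\mb{P}_{\delta,c_2}$ and to recognise $\mc{M}$ as a holomorphic principal elliptic bundle over $B$. Since $\dim\mc{M}=8\Delta(2,\delta,c_2)=2$ forces $\Delta(2,\delta,c_2)=\tfrac14$, Proposition~\ref{spectral}(ii) says $E_{\delta,c_2}$ is a line bundle of degree $-2$, so $\mb{P}_{\delta,c_2}\cong\mb{P}(E_{\delta,c_2})\cong B$. Because $\lfloor 2\Delta(2,\delta,c_2)\rfloor=0$, Remark~\ref{filter} shows no spectral curve has a jump, so stable irreducibility makes every spectral curve an irreducible bisection $C$, which is smooth since a stably irreducible sheaf with no jumps has smooth spectral curve (Section~\ref{Spectral}). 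Moreover $\mc{M}$ contains no non-locally free sheaves: a properly torsion-free $\mc{E}$ with $m\geq 1$ singular points would have $\Delta(\mc{E}^{\vee\vee})=\tfrac14-\tfrac m2<0$, contradicting the non-negativity of the discriminant on the non-algebraic surface $X$. Hence, by the discussion after Proposition~\ref{irred}, $\mc{G}^{-1}(C)$ is a torsor under $\text{Prym}(C/B)$; and since $g(C)=4\Delta(2,\delta,c_2)+1=2$ by~\eqref{genus}, Riemann--Hurwitz makes $C\to B$ a double cover branched at two points, so $\text{Prym}(C/B)$ is a one-dimensional abelian variety, hence an elliptic curve, in particular connected. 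Thus $\mc{G}\colon\mc{M}\to B$ is a proper holomorphic submersion all of whose fibres are smooth elliptic curves, and $\mc{M}$ is connected.

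I would then upgrade $\mc{G}$ to a holomorphic fibre bundle: Ehresmann's theorem gives $C^\infty$ local triviality, and holomorphic local triviality follows on realising $\mc{M}$ as a torsor under the smooth abelian scheme $\text{Prym}(\mc{C}/B)$ given by the relative Prym of the universal (smooth, genus-two) spectral curve $\mc{C}\to\mb{P}_{\delta,c_2}\cong B$. So $\mc{M}$ is a holomorphic principal bundle over the elliptic curve $B$ with structure group a fixed elliptic curve $T'$. Combined with the fact that $\mc{M}$ is holomorphic symplectic of dimension $2$, so $K_\mc{M}\cong\mc{O}_\mc{M}$, the Enriques--Kodaira classification leaves only two possibilities: $\mc{M}$ is a complex $2$-torus, or a primary Kodaira surface. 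To exclude the torus one may invoke the paper's main theorem ($\mc{M}$ is non-K\"ahler, hence neither a torus nor a K3); more intrinsically, note that $T_B\cong\mc{O}_B$ and $T_{\mc{M}/B}\cong\mc{O}_\mc{M}$ (the fibres are abelian varieties and $\det T_\mc{M}=\mc{O}_\mc{M}$), so the tangent sheaf is a self-extension
\[
0\longrightarrow\mc{O}_\mc{M}\longrightarrow T_\mc{M}\longrightarrow\mc{O}_\mc{M}\longrightarrow 0,
\]
and $\mc{M}$ is a torus exactly when this splits, equivalently when the torsor $\mc{M}\to B$ is topologically trivial, i.e.\ when its Chern class in $H^2(B,\pi_1(T'))\cong\Z^2$ vanishes; the point is to show this class is non-zero, after which the Leray spectral sequence of $\mc{G}$ also yields $b_1(\mc{M})=3$.

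The remaining, and to my mind hardest, part is to identify $T'$ with the fibre $T$ of $X$ and to obtain the N\'eron--Severi divisibility. Here I would use Proposition~\ref{prym}: every bundle in $\mc{G}^{-1}(C)$ is $\gamma_*(L\otimes\psi^*\lambda)$ with $\lambda\in\text{Prym}(C/B)$ and $W\to C\to B$ the normalised fibre product of $X$ with $C$, which canonically identifies $T'=\text{Prym}(C/B)$ with $T$ --- this is the source of ``the same fibre as $X$'' --- and then one tracks the constant factors of automorphy defining $X$ through $\gamma$ and the norm homomorphism $\eta_n$ to express the topological type of $\mc{M}\to B$ in terms of that of $X\to B$. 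The expected conclusion is that the topological Chern class of $\mc{M}\to B$ is a non-zero integer multiple of that of $X\to B$: non-zero settles the previous paragraph, and ``a multiple'' gives the divisibility, since for a primary Kodaira surface the torsion of its N\'eron--Severi group is cyclic, generated by the fibre class, of order equal to the content of the Chern class of the bundle over $B$ --- this is the $\Z/d\Z$ in $H^2(X,\Z)=\Z^4\oplus\Z/d\Z$. I expect pinning down this integer multiple --- hence the precise divisibility --- to be the main obstacle, since it requires careful bookkeeping with $W\to C$, with $\eta_n$, and with the factors of automorphy; the hypothesis $t(2,\delta)>\tfrac14$ is used throughout, both to ensure $\mc{M}$ is smooth and compact (via $\Delta(2,\delta,c_2)<t(2,\delta)$) and so that the spectral curves genuinely sweep out $B$.
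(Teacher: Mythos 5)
Your outline gets the easy reductions right, and they agree with what this paper actually establishes elsewhere: $\dim=2$ forces $\Delta(2,\delta,c_2)=\tfrac14$, so $\mb{P}_{\delta,c_2}\cong B$ (the $\Delta=\tfrac14$ case of Proposition \ref{spectral}), Proposition \ref{FiniteUnstable} rules out jumps, the discriminant computation $\Delta(\mc{E}^{\vee\vee})=\tfrac14-\tfrac m2$ rules out non-locally free sheaves, and $\mc{G}^{-1}(C)\cong\mathrm{Prym}(C/B)$ is a connected one-dimensional abelian variety since $g(C)=2$. But be aware that the paper offers no proof of this proposition at all: it is quoted from Aprodu--Moraru--Toma \cite{ApMorTom}, so there is nothing internal to compare against, and the burden of the statement lies exactly in the parts your proposal leaves open.

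Those open parts are genuine gaps, not bookkeeping. First, passing from ``every fibre of $\mc{G}$ is an elliptic curve'' to ``$\mc{M}$ is a holomorphic principal bundle over $B$ with a \emph{fixed} structure group $T'$'' requires isotriviality of the relative Prym family $\mathrm{Prym}(\mc{C}/B)\to\mb{P}_{\delta,c_2}\cong B$; a torsor under a non-constant abelian scheme is not a principal bundle, and you give no argument that the Pryms of the varying genus-2 spectral curves are all isomorphic. Second, the identification $T'\cong T$ and the divisibility $\ord(NS(X))\mid\ord(NS(\mc{M}))$ are precisely the content of the proposition, and you explicitly defer them (``the expected conclusion is\dots''); nothing in your sketch pins down the topological invariant of $\mc{M}\to B$ in terms of $d$, which is where the whole difficulty sits. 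Third, your quickest route to excluding the torus --- invoking the paper's non-K\"ahler theorem --- is circular: in Section \ref{Applications} the $\Delta=\tfrac14$ cases of both the non-K\"ahler and the non-simply-connected statements are deduced \emph{from} Proposition \ref{2dim}, so you must instead prove directly that the class of the torsor in $H^2(B,\pi_1(T'))$ is non-zero, which again comes back to the unproved factor-of-automorphy computation. As it stands the proposal is a plausible reading of how \cite{ApMorTom} proceeds, but it does not yet constitute a proof of the statement.
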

\begin{proposition}
	Let $\delta,c_2$ be such that $\Delta(2,\delta,c_2)< t(2,\delta)$.
	\begin{enumerate}
		\item[i.] If $\Delta(2,\delta,c_2)=0,$ then $\mc{M}_{2,\delta,c_2}(X)$ consists of four points. 
		\item[ii.] If $\Delta(2,\delta,c_2)>0$, then the induced map of fundamental groups $$\pi_1(\mc{G}):\pi_1(\mc{M}_{2,\delta,c_2}(X))\to \pi_1(\mb{P}_{\delta,c_2})\cong \Z^2$$ is surjective. 
	\end{enumerate}
	In particular, $\mc{M}_{2,\delta,c_2}(X)$ is not simply connected when $\Delta(2,\delta,c_2)>0$. 
\end{proposition}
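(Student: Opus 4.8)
\emph{Proof proposal.} The plan is to run everything through the graph map $\mc{G}\colon\mc{M}_{2,\delta,c_2}(X)\to\mb{P}_{\delta,c_2}$, using Proposition \ref{spectral} for the base and the fibre computations of Section \ref{Fibres} for the fibres. Throughout I would first normalise via Remark \ref{tensor} so that $c_2<0$, which does not alter $\mc{M}_{2,\delta,c_2}$ up to isomorphism (nor its fundamental group) but makes $\mc{G}$ surjective by Proposition \ref{surjective}, since $t(2,c_1(\delta))>\Delta(2,\delta,c_2)\ge 0$ forces $t(2,c_1(\delta))>0$. For part (i), when $\Delta(2,\delta,c_2)=0$ the moduli space has dimension $8\Delta(2,\delta,c_2)=0$, hence is a finite reduced set, and $\mb{P}_{\delta,c_2}$ consists of exactly two points by Proposition \ref{spectral}(i), both in the image of $\mc{G}$. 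By Proposition \ref{FiniteUnstable} no sheaf in $\mc{M}_{2,\delta,c_2}$ has a jump (the jump multiplicities sum to at most $2\Delta=0$), so each spectral curve is an irreducible bisection --- irreducible because stably irreducible sheaves are irreducible --- which is smooth by the final Proposition of Section \ref{Spectral} and has arithmetic genus $4\Delta+1=1$ by \eqref{genus}. Thus every spectral curve $C$ is a smooth elliptic curve and $C\to B$ is an \'etale double cover, so by Section \ref{Fibres} the fibre $\mc{G}^{-1}(C)\cong\text{Prym}(C/B)$ is the two-element group; counting gives $2\times 2=4$ points.

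For part (ii), I would first record that $\pi_1(\mb{P}_{\delta,c_2})\cong\Z^2$: by Propositions \ref{Pnbundle} and \ref{spectral}, $\mb{P}_{\delta,c_2}\cong\mb{P}(E_{\delta,c_2})$ is a $\mb{P}^{n}$-bundle over the genus-one curve $B$ with $n=4\Delta(2,\delta,c_2)-1\ge 0$, and since $\mb{P}^n$ is connected and simply connected the homotopy exact sequence of the bundle gives $\pi_1(\mb{P}_{\delta,c_2})\cong\pi_1(B)\cong\Z^2$. For the surjectivity of $\pi_1(\mc{G})$ I would restrict $\mc{G}$ over the open stratum $\mb{P}^{0}_{\delta,c_2}$, whose complement in $\mb{P}_{\delta,c_2}$ has codimension $1$ (Remark \ref{filter}). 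Over $\mb{P}^{0}_{\delta,c_2}$ every spectral curve $C$ is a smooth irreducible bisection (final Proposition of Section \ref{Spectral}), every sheaf with that spectral curve is locally free (a non-locally-free sheaf would push a vertical component into its spectral curve, as in the proof of Proposition \ref{FiniteUnstable}), and the sheaves with determinant $\delta$ and spectral curve $C$ are parameterised by $\text{Prym}(C/B)$ (Section \ref{Fibres}). The key observation is that by \eqref{genus} the curve $C$ has genus $4\Delta(2,\delta,c_2)+1\ge 2$, so Riemann--Hurwitz forces $C\to B$ to be \emph{ramified}; in contrast to the unramified case recalled in Section \ref{Fibres}, a ramified double cover of curves has \emph{connected} Prym variety, so $\text{Prym}(C/B)$ is here a connected abelian variety of dimension $4\Delta(2,\delta,c_2)$.

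Since $\mc{G}$ is an algebraic completely integrable system \cite{BrMor}, it is equidimensional, with all fibres of dimension $4\Delta(2,\delta,c_2)=\dim\mc{M}_{2,\delta,c_2}-\dim\mb{P}_{\delta,c_2}$; together with the smoothness of $\mc{M}_{2,\delta,c_2}$ and of $\mb{P}^{0}_{\delta,c_2}$ and the fact that over $\mb{P}^{0}_{\delta,c_2}$ the fibres are the smooth varieties $\text{Prym}(C/B)$, the restriction $\mc{G}^{-1}(\mb{P}^{0}_{\delta,c_2})\to\mb{P}^{0}_{\delta,c_2}$ is a proper submersion, hence (Ehresmann) a fibre bundle with connected fibre. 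Equidimensionality also shows that each irreducible component of $\mc{M}_{2,\delta,c_2}$ dominates $\mb{P}_{\delta,c_2}$ and so meets the connected dense open set $\mc{G}^{-1}(\mb{P}^{0}_{\delta,c_2})$; thus $\mc{M}_{2,\delta,c_2}$ is connected and this open set has complement of complex codimension $\ge 1$. Then $\pi_1(\mc{G}^{-1}(\mb{P}^{0}_{\delta,c_2}))\to\pi_1(\mc{M}_{2,\delta,c_2})$ and $\pi_1(\mb{P}^{0}_{\delta,c_2})\to\pi_1(\mb{P}_{\delta,c_2})$ are surjective (loops can be pushed off a locus of real codimension $\ge 2$), and $\pi_1(\mc{G}^{-1}(\mb{P}^{0}_{\delta,c_2}))\to\pi_1(\mb{P}^{0}_{\delta,c_2})$ is surjective (fibre bundle, connected fibre); a short diagram chase over the resulting commutative square then gives surjectivity of $\pi_1(\mc{G})$. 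Since $\pi_1(\mb{P}_{\delta,c_2})\cong\Z^2\ne 0$ and $\mc{M}_{2,\delta,c_2}$ is connected, $\mc{M}_{2,\delta,c_2}$ is not simply connected.

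The hard part --- the only step beyond bookkeeping --- is the connectedness of the generic fibre of $\mc{G}$: the naive attempt stalls on the fact that the Prym of an \emph{unramified} double cover has two components, yielding at best an image of index at most $2$ in $\Z^2$, and the resolution is precisely the genus formula \eqref{genus}, which in the stably irreducible range with $\Delta>0$ makes every jump-free spectral curve a smooth curve of genus $\ge 2$, hence ramified over $B$ with connected Prym. The remaining ingredients --- equidimensionality of $\mc{G}$, the identification of its generic fibre with a Prym variety, and the behaviour of $\pi_1$ under inclusion of a dense open set with small complement --- are quoted from Sections \ref{Spectral}--\ref{Fibres} or are standard.
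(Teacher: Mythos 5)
Your part (i) is essentially the paper's own argument: no jumps by Proposition \ref{FiniteUnstable}, genus $1$ spectral curves by \eqref{genus}, hence unramified double covers of $B$ with two-element Prym, and $2\times 2=4$. For part (ii) you take a genuinely different route. The paper never looks at the generic fibres: it fixes a regular bundle $E\in\mc{M}_{2,\delta,c_2-1}(X)$ and uses the compact subvariety $\text{Quot}(E,1)\cong\mb{P}(E)\subset\mc{M}_{2,\delta,c_2}(X)$, which fibres over the section $\sigma_E\colon b\mapsto S_E+\{b\}\times T^*$ of $\mb{P}_{\delta,c_2}\to B$; the homotopy sequence of $\mb{P}(E)\to B$ then produces loops in $\mc{M}_{2,\delta,c_2}(X)$ hitting all of $\pi_1(\mb{P}_{\delta,c_2})\cong\Z^2$. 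So the paper's loops live inside the jump locus (among non-locally free sheaves), whereas yours live over the open stratum $\mb{P}^0_{\delta,c_2}$ and rest on the observation that \eqref{genus} forces every jump-free spectral curve to be a \emph{ramified} double cover of $B$, so that the fibre $\text{Prym}(C/B)$ of $\mc{G}$ is connected. That observation is correct and is a nice complement to the paper; the paper's route has the advantage of needing nothing about fibres over $\mb{P}^0_{\delta,c_2}$, about components of $\mc{M}_{2,\delta,c_2}(X)$, or about fibre dimensions, and it also treats $\Delta=\frac{1}{4}$ by quoting Proposition \ref{2dim}, which your argument handles uniformly.

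Two steps in your write-up are asserted rather than proved, though both can be bypassed. First, smoothness of the set-theoretic fibres over the smooth base $\mb{P}^0_{\delta,c_2}$ does not by itself make $\mc{G}$ a submersion there, so Ehresmann is not immediately available; either shrink to the complement of the critical values (a proper closed analytic subset of $\mb{P}_{\delta,c_2}$ by Sard and Remmert's proper mapping theorem, so the codimension and $\pi_1$-surjectivity statements on the base are unaffected), or avoid bundle structure altogether by using that a proper surjective holomorphic map with connected fibres onto a connected manifold is surjective on $\pi_1$. Second, equidimensionality of $\mc{G}$ is nowhere established in the paper and does not follow formally from the phrase ``algebraically completely integrable system''; in the paper it is only visible through one-jump curves (Propositions \ref{4dim} and \ref{6dim}). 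Fortunately you use it only to get connectedness of $\mc{M}_{2,\delta,c_2}(X)$ and surjectivity of $\pi_1(\mc{G}^{-1}(\mb{P}^0_{\delta,c_2}))\to\pi_1(\mc{M}_{2,\delta,c_2}(X))$, and your diagram chase does not need that arrow to be surjective: by commutativity the image of $\pi_1(\mc{G})$ already contains the image of the composite $\pi_1(\mc{G}^{-1}(\mb{P}^0_{\delta,c_2}))\to\pi_1(\mb{P}^0_{\delta,c_2})\to\pi_1(\mb{P}_{\delta,c_2})$, which is everything. With these adjustments (and the same basepoint caveat the paper's proof carries if $\mc{M}_{2,\delta,c_2}(X)$ were disconnected), your argument is sound.
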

\begin{proof}
	For the case of $\Delta(2,\delta,c_2)=0$, every spectral curve is smooth by Proposition \ref{FiniteUnstable}, and the genus formula \eqref{genus} gives that each spectral curve $C$ in $\mb{P}_{2,\delta,c_2}$ is an unramified double cover of $B$. From this we can conclude that $\mc{G}^{-1}(C)\cong \text{Prym}(C/B)$ is the group with two elements, giving the desired result.
	
	The case of $\Delta(2,\delta,c_2)=\frac{1}{4}$ is a direct corollary of Proposition \ref{2dim}, so for the remainder of the proof we assume $\Delta(2,\delta,c_2)\geq \frac{1}{2}$.
	
	Recall that given any fibre bundle $F\hookrightarrow Y\to Z$, there is an induced exact sequence 
	\begin{equation}\begin{tikzcd}\label{homotopy}\pi_1(F)\to \pi_1(Y)\to \pi_1(Z)\to \pi_0(F) \end{tikzcd} \end{equation}
	of the homotopy groups \cite[Section 17]{BottTu}.
	
	Using \eqref{homotopy} we see that whenever $(\delta,c_2)$ are such that $\frac{1}{2}\leq \Delta(2,\delta,c_2)<t(2,\delta)$,we have $$\pi_1(\mb{P}_{\delta,c_2})=\pi_1(B)=\Z^2$$ since $\mb{P}_{\delta,c_2}$ is a holomorphic fibre bundle with connected and simply connected fibres. In particular this means that for any section $\sigma:B\to \mb{P}_{\delta,c_2}$ and any element $[\gamma]\in \pi_1(\mb{P}_{\delta,c_2})$, there is a representative of $[\gamma]$ contained in $\sigma(B)$. Let $E$ be a regular rank-2 vector bundle in $\mc{M}_{2,\delta,c_2-1}(X)$ with spectral curve $C$, and take the section $\sigma_E:B\to \mb{P}_{\delta,c_2}$ given by $b\mapsto C+\{b\}\times T^*$. We will show that for any loop $\gamma\in \sigma_E(B)$, there is a loop in $\mc{M}_{2,\delta,c_2}(X)$ which maps to $\gamma$, demonstrating that $\pi_1(\mc{G})$ is a surjection.
	Consider the subset $\text{Quot}(E,1)\subseteq \mc{M}_{2,\delta,c_2}$ consisting of non-locally free sheaves whose double dual is $E$ and which have one singularity counting multiplicity. Since $\text{Quot}(E,1)\cong \mb{P}(E)$, the map $\mc{G}|_{\text{Quot}(E,1)}$ is a fibration over $\sigma_E(B)$. Applying \eqref{homotopy} again gives the exact sequence $$\begin{tikzcd}
		\pi_1(\mb{P}(E|_{\pi^{-1}(b)}))\arrow[r] & \pi_1(\text{Quot}(E,1))\arrow[r] & \pi_1(\sigma_E(B))\arrow[r] & 0.
	\end{tikzcd}$$ 
	Since every element $[\gamma]\in \pi_1(\mb{P}_{\delta,c_2})$ can be represented by a loop in $\sigma_E(B)$, factoring through inclusion into $\mc{M}_{2,\delta,c_2}(X)$ gives that the map $$\pi_1(\mc{G}):\pi_1(\mc{M}_{2,\delta,c_2}(X))\to \pi_1(\mb{P}_{\delta,c_2})$$ is surjective. 
\end{proof}
\begin{remark}
    By using \cite[Lemma 3.5]{Arapura} we can improve the above result to get an exact sequence $$\begin{tikzcd}
    \Z^{8\Delta(2,\delta,c_2)} \arrow[r] & \pi_1(\mc{M}_{2,\delta,c_2}(X)) \arrow[r] & \pi_1(\mb{P}_{2,\delta}) \arrow[r] & 0
    \end{tikzcd}$$
    when $\Delta(2,\delta,c_2)\leq \frac{3}{4}$. This result may also work in higher dimensions, but has not yet been verified.
\end{remark}
\begin{proposition}
	Let $\delta\in \Pic(X)$ and $c_2\in \mathbb{Z}$ be such that $0<\Delta(2,\delta,c_2)<t(2,\delta)$. Then $\mc{M}_{2,\delta,c_2}(X)$ does not admit a K\"ahler structure.
\end{proposition}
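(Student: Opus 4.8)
The plan is to produce inside $\mc{M}_{2,\delta,c_2}(X)$ a closed complex submanifold that fails to be K\"ahler; since a closed complex submanifold of a compact K\"ahler manifold is again K\"ahler, this settles the proposition. The submanifold I will use is a $\mb{P}^1$-bundle over the primary Kodaira surface $X$ itself: such a bundle has the same fundamental group as $X$, hence first Betti number $b_1=3$, and a compact K\"ahler manifold has even first Betti number.

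Since $4\Delta(2,\delta,c_2)$ is a positive integer, there are two cases. If $\Delta(2,\delta,c_2)=\tfrac14$ then $t(2,\delta)>\tfrac14$ by hypothesis, so $\mc{M}_{2,\delta,c_2}(X)$ is the two-dimensional moduli space of Proposition \ref{2dim}, which is a primary Kodaira surface and hence non-K\"ahler; this case is done. Otherwise $\Delta(2,\delta,c_2)\geq\tfrac12$, so $\Delta(2,\delta,c_2-1)=\Delta(2,\delta,c_2)-\tfrac12$ is non-negative and still strictly below $t(2,\delta)$. Hence $\mc{M}_{2,\delta,c_2-1}(X)$ is non-empty and in the stably irreducible range, and I fix a regular rank-$2$ vector bundle $E$ in it, so $\det E=\delta$ and $c_2(E)=c_2-1$.

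Consider the locus $\mathrm{Quot}(E,1)\cong\mb{P}(E)$ of non-locally free sheaves $\mc{E}$ with $\mc{E}^{\vee\vee}=E$ and a single singular point counted with multiplicity, which already appears in the proof of the preceding proposition. Each such $\mc{E}$ has $\det(\mc{E})=\delta$ and $c_2(\mc{E})=c_2(E)+1=c_2$, and $\Delta(\mc{E})=\Delta(2,\delta,c_2)<t(2,\delta)=t(\mc{E})$, so each $\mc{E}$ is stably irreducible; thus $\mathrm{Quot}(E,1)\subseteq\mc{M}_{2,\delta,c_2}(X)$. The map $\mb{P}(E)\to\mc{M}_{2,\delta,c_2}(X)$ sending $[E\twoheadrightarrow\mc{O}_x]$ to its kernel is a closed embedding --- it is injective because $E$ is simple and $E$ is recovered as $\mc{E}^{\vee\vee}$, and it is an immersion by the deformation theory of the sheaves $\mc{E}$ --- so $\mathrm{Quot}(E,1)$ is a closed submanifold isomorphic to $\mb{P}(E)$. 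As the fibre $\mb{P}^1$ is simply connected, $\pi_1(\mb{P}(E))\cong\pi_1(X)$ and $b_1(\mb{P}(E))=b_1(X)=3$ is odd, so $\mb{P}(E)$ is not K\"ahler and neither is $\mc{M}_{2,\delta,c_2}(X)$.

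\textbf{Main obstacle.} The step needing the most care is that $\mathrm{Quot}(E,1)$ is genuinely embedded as a smooth closed submanifold of $\mc{M}_{2,\delta,c_2}(X)$, rather than merely mapped to it. This can be avoided: the image $Z\subseteq\mc{M}_{2,\delta,c_2}(X)$ of $\mathrm{Quot}(E,1)$ is a closed analytic subset, and $\mc{E}\mapsto\mathrm{Supp}(\mc{E}^{\vee\vee}/\mc{E})$ is a surjective proper holomorphic map $Z\to X$; were $\mc{M}_{2,\delta,c_2}(X)$ K\"ahler, it and hence $Z$ and hence $X$ would lie in Fujiki's class $\mc{C}$, but a compact complex surface in class $\mc{C}$ has even first Betti number, contradicting $b_1(X)=3$. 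The only other ingredient is the routine verification, immediate from the formulas for $\Delta$ and $t$, that the sheaves built from $E$ have determinant $\delta$, second Chern class $c_2$, and discriminant below $t(2,\delta)$.
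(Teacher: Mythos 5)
Your argument is essentially the paper's own proof: the $\Delta=\tfrac14$ case is handled by Proposition \ref{2dim}, and for $\Delta\geq\tfrac12$ one embeds $\mathrm{Quot}(E,1)\cong\mb{P}(E)$ for a regular $E\in\mc{M}_{2,\delta,c_2-1}(X)$ into $\mc{M}_{2,\delta,c_2}(X)$ and uses $b_1(\mb{P}(E))=b_1(X)=3$ together with the fact that closed complex submanifolds of compact K\"ahler manifolds are K\"ahler. Your fallback via Fujiki class $\mc{C}$ and the proper surjection onto $X$ is a nice way to sidestep the embeddedness issue, but it does not change the substance of the argument.
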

\begin{proof}
	The case of $\Delta(2,\delta,c_2)=\frac{1}{4}$ follows immediately from Proposition \ref{2dim}. For $\Delta(2,\delta,c_2)\geq \frac{1}{2}$, let $E$ be a regular vector bundle in $\mc{M}_{2,\delta,c_2-1}(X).$ Then $\text{Quot}(E,1)\cong \mb{P}(E)$ is a complex submanifold of $\mc{M}_{2,\delta,c_2}(X)$. Since $\mb{P}(E)$ is a fibre bundle with simply connected fibre, $b_1(\mb{P}(E))\cong b_1(X)=3$, so $\mb{P}(E)$ does not admit any K\"ahler structure. Since any complex submanifold of a K\"ahler manifold has an induced K\"ahler structure, $\mc{M}_{2,\delta,c_2}$ can not admit a K\"ahler structure. 
\end{proof}
\begin{corollary}
	When $0<\Delta(2,\delta,c_2)<t(2,\delta)$, the moduli space $\mc{M}_{2,\delta,c_2}(X)$ is a non-K\"ahler compact holomorphic symplectic manifold which is not deformation equivalent to a Bogomolov-Guan manifold.
\end{corollary}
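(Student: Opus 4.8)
The plan is to deduce the corollary from the three properties of $\mc{M}_{2,\delta,c_2}(X)$ established in the two preceding propositions, together with a single structural fact about Bogomolov--Guan manifolds drawn from \cite{Guan,BogGuan}. First I would record that in the range $0\le\Delta(2,\delta,c_2)<t(2,\delta)$ Toma's condition $(*)$ is satisfied, so by \cite{TomaCpt} --- using nonemptiness from \cite{TomaNote} --- the space $\mc{M}_{2,\delta,c_2}(X)$ is a nonempty compact complex manifold of positive dimension admitting a holomorphic symplectic form; the preceding proposition supplies that, since $\Delta(2,\delta,c_2)>0$, it carries no Kähler metric. Hence the only genuinely new content is the failure of deformation equivalence with a Bogomolov--Guan manifold.

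For that step the key input is that every Bogomolov--Guan manifold is simply connected \cite{Guan,BogGuan}, in particular has $b_1=0$, whereas the preceding proposition exhibits a surjection $\pi_1(\mc{M}_{2,\delta,c_2}(X))\twoheadrightarrow\pi_1(\mb{P}_{\delta,c_2})\cong\Z^2$, so that $b_1(\mc{M}_{2,\delta,c_2}(X))\ge 2$. Deformation-equivalent compact complex manifolds are diffeomorphic --- apply Ehresmann's fibration theorem to a smooth family over a connected base --- and $b_1$, indeed the whole fundamental group, is a diffeomorphism invariant; a Bogomolov--Guan manifold therefore cannot be diffeomorphic, and a fortiori cannot be deformation equivalent, to $\mc{M}_{2,\delta,c_2}(X)$. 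Combining this with the first paragraph yields the statement.

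The hard part here is essentially nonexistent: the corollary is a bookkeeping consequence of the results proved just before it, and the only point requiring care is invoking the correct property of Bogomolov--Guan manifolds, namely their simple connectivity (equivalently $b_1=0$). This is precisely the feature that distinguishes Bogomolov--Guan manifolds from $\mc{M}_{2,\delta,c_2}(X)$; it does not distinguish $\mc{M}_{2,\delta,c_2}(X)$ from the Douady spaces $X^{[n]}$ of points on a Kodaira surface, which are likewise non-Kähler with infinite fundamental group, and which therefore remain the one previously known deformation type that this argument cannot rule out --- the comparison the paper flags as open rather than resolves.
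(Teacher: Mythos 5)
Your proposal is correct and matches the paper's (implicit) argument: the corollary is stated without a separate proof precisely because it follows from the two preceding propositions (non-K\"ahler, and $\pi_1(\mc{M}_{2,\delta,c_2}(X))$ surjecting onto $\Z^2$, hence not simply connected) together with the simple connectivity of Bogomolov--Guan manifolds and the fact that deformation equivalence preserves the fundamental group via Ehresmann. Your closing observation that this argument cannot rule out the Douady spaces $X^{[n]}$ is also exactly the point the paper makes immediately after the corollary.
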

In particular, the above result implies that either $\mc{M}_{2,\delta,c_2}(X)$ is deformation equivalent to a Douady space of points on a Kodaira surface, or $\mc{M}_{2,\delta,c_2}(X)$ belongs to a deformation class separate from other known examples of compact holomorphic symplectic manifolds.

\subsection{4- and 6-Dimensional Moduli Spaces}
In the case where $\Delta(2,\delta,c_2)\in \{\frac{1}{2},\frac{3}{4}\}$, Proposition \ref{FiniteUnstable} gives that spectral curves in $\mb{P}_{\delta,c_2}$ contain at most one jump counting multiplicity. In these cases, we can describe the fibres of the graph map above all spectral curves using Proposition \ref{singular} and \cite[Theorem 4.5]{BrMoFM}.

\begin{proposition}\label{4dim}
	In the case that $\Delta(2,\delta,c_2)=\frac{1}{2}$ and $t(2,\delta)>\frac{1}{2}$, the space of graphs $\mb{P}_{\delta,c_2}$ is a ruled surface with base $B$, and $\mb{P}_{\delta,c_2}^1$ is a bi-section of $\mb{P}_{\delta,c_2}\to B$. The fibres of $\mc{G}$ above points in $\mb{P}^0_{\delta,c_2}$ are 2-dimensional Prym varieties, and the fibres above points in $\mb{P}_{\delta,c_2}^1$ are given by a union of two ruled surfaces with base $T\times \{1,-1\}$, which intersect along two sections. 
\end{proposition}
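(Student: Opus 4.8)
The plan is to obtain every assertion by specializing the structural results of Section~\ref{Spectral} and Proposition~\ref{singular} to the two numerical facts that $\Delta(2,\delta,c_2)=\tfrac12$ makes $4\Delta(2,\delta,c_2)=2$ even and makes $\Delta(2,\delta,c_2-1)=\Delta(2,\delta,c_2)-\tfrac12=0$. First I would settle the shape of $\mb{P}_{\delta,c_2}$: since $4\Delta(2,\delta,c_2)=2$ is even and positive, Proposition~\ref{spectral}(iii) gives $\mb{P}_{\delta,c_2}\cong\mb{P}(E_{\delta,c_2})$ with $E_{\delta,c_2}$ a direct sum of two stable bundles of rank $2\Delta(2,\delta,c_2)=1$ and degree $-1$, i.e.\ a direct sum of two degree $-1$ line bundles on $B$, so $\mb{P}_{\delta,c_2}$ is a ruled surface over $B$.

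Next I would identify $\mb{P}^1_{\delta,c_2}$. By Remark~\ref{filter} it is closed of codimension $1$ in $\mb{P}_{\delta,c_2}$ and isomorphic to $\mb{P}^0_{\delta,c_2-1}\times\text{Hilb}^1(B)$; since $\Delta(2,\delta,c_2-1)=0$, Proposition~\ref{FiniteUnstable} forbids jumps, so $\mb{P}^0_{\delta,c_2-1}=\mb{P}_{\delta,c_2-1}$, which consists of two reduced points by Proposition~\ref{spectral}(i). Hence $\mb{P}^1_{\delta,c_2}$ is a disjoint union of two copies of $\text{Hilb}^1(B)\cong B$. To see it is a bisection of the ruling $\rho\colon\mb{P}_{\delta,c_2}\to B$, I would use the identification from the proof of Proposition~\ref{Pnbundle} of the base of $\rho$ with $\Pic^{c_2}(B)$, under which $\rho$ sends a graph to the class $\mf{b}$ with $S\sim A_\delta+\rho^*\mf{b}$. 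Since $\{b_1\}\times T^*$ descends to the fibre class $\rho^*\mc{O}_B(b_1)$ of $\mb{F}_\delta$, a curve $C+\{b_1\}\times T^*$ with $C\sim A_\delta+\rho^*\mf{c}$ is sent to $\mf{c}\otimes\mc{O}_B(b_1)$, and for each of the two fixed values of $\mf{c}$ the assignment $b_1\mapsto\mf{c}\otimes\mc{O}_B(b_1)$ is an isomorphism of $B$ onto $\Pic^{c_2}(B)$; so each component of $\mb{P}^1_{\delta,c_2}$ is a section and $\mb{P}^1_{\delta,c_2}$ is a bisection. (Alternatively one could just compute the intersection number of the closed curve $\mb{P}^1_{\delta,c_2}$ with a ruling fibre and check it equals $2$.)

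For the fibres of $\mc{G}$ I would treat the two strata separately. A point of $\mb{P}^0_{\delta,c_2}$ is an irreducible bisection $C$ with no jumps; since the sheaves of $\mc{M}_{2,\delta,c_2}(X)$ are stably irreducible, $C$ is smooth by the last proposition of Section~\ref{Spectral}, and $g(C)=4\Delta(2,\delta,c_2)+1=3$ by \eqref{genus}. Because $\mc{G}$ is surjective here --- Proposition~\ref{surjective}, whose hypotheses $t(2,c_1(\delta))>0$ and $c_2<0$ follow from $t(2,\delta)>\tfrac12$ and $\Delta(2,\delta,c_2)=\tfrac12$ by a short computation with the discriminant and $t$-invariant formulas --- we get $\mc{G}^{-1}(C)\cong\text{Prym}(C/B)$ by \cite[Theorem 4.5]{BrMoFM}, an abelian variety of dimension $g(C)-g(B)=2$. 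A point of $\mb{P}^1_{\delta,c_2}$ is of the form $C+\{b\}\times T^*$ with $C\in\mb{P}^0_{\delta,c_2-1}$; since $\Delta(2,\delta,c_2-1)=0$, \eqref{genus} gives that $C$ is smooth of genus $4\cdot0+1=1$, hence an unramified double cover of $B$ by Riemann--Hurwitz, so $\text{Prym}(C/B)=\{1,-1\}$ and $C$ meets every fibre $\{b\}\times T^*$ in two distinct points. Proposition~\ref{singular} then identifies $\mc{G}^{-1}(C+\{b\}\times T^*)$ with a union of two $\mb{P}^1$-bundles over $\text{Prym}(C/B)\times T=\{1,-1\}\times T$ meeting along $|C\cap\{b\}\times T^*|=2$ sections, which is exactly the claim.

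The step I expect to be the main obstacle is the second one: verifying that the abstract product decomposition $\mb{P}^1_{\delta,c_2}\cong\{\text{two points}\}\times B$ coming from Remark~\ref{filter} is compatible with the ruling, i.e.\ that each of its two components is a genuine section rather than some other curve in the surface. This is the one place where one must carry along the explicit description of $\rho$ from Proposition~\ref{Pnbundle} and track how the linear-equivalence class of a graph changes when a fibre $\{b_1\}\times T^*$ is adjoined; everything else is a direct substitution of $\Delta(2,\delta,c_2)=\tfrac12$ into the quoted results together with elementary genus and Riemann--Hurwitz bookkeeping.
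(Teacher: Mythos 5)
Your proposal is correct and follows essentially the same route as the paper: Proposition \ref{spectral} for the ruled-surface structure, Remark \ref{filter} together with the $\Delta(2,\delta,c_2-1)=0$ case for the stratum of graphs with a jump, and the genus formula \eqref{genus} plus Proposition \ref{singular} (with the genus-1 cover being unramified) for the fibres. Your explicit verification that each component of $\mb{P}^1_{\delta,c_2}$ is a section of $\rho$, and your check of the hypotheses of Proposition \ref{surjective}, fill in details the paper leaves implicit but do not change the argument.
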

\begin{remark}
	The set $\{1,-1\}$ in the above statement corresponds to the Prym variety $\text{Prym}(C/B)$, where $C$ is a bisection of $J(X)$ such that $C+\{b\}\times T^*\in \mb{P}_{\delta,c_2}$ for some $b \in B$.
\end{remark}
\begin{proof}
	For these invariants, since $\Delta(2,\delta,c_2)=\frac{1}{2}>0$, Proposition \ref{spectral} gives that $\mb{P}_{\delta,c_2}$ is a ruled surface with base $B$, and as in Remark \ref{filter}, the graphs with jumps are parameterized by $\mb{P}^0_{\delta,c_2-1}\times \text{Hilb}^1(B)=\mb{P}_{\delta,c_2-1}\times B$. Since we have $\Delta(2,\delta,c_2-1)=0$, the space of graphs $\mb{P}_{\delta,c_2-1}$ is a two point set by Proposition \ref{spectral}.  Using the genus formula \eqref{genus}, we see that the spectral curves in this scenario can be either a genus 3 curve $C$ or a genus 1 curve $C'$ plus a jump of length one at some $b \in B$. This immediately gives that the fibres $\text{Prym}(C/B)$ above spectral curves $C$ are 2-dimensional. For the fibres above graphs with a jump, Proposition \ref{singular} gives that both the non-locally free component of the fibre and the closure of the locally free component are ruled surfaces with base $T\times \text{Prym}(C'/B)\cong T\times \{1,-1\}$, and they intersect along $|C'\cap \{b\}\times T^*|$ sections. As any map from a genus 1 curve to $B$ is an unramified covering map, $|C'\cap \{b\}\times T^*|=2$ for all $b \in B$. this implies that the locally free and non-locally free components will intersect along two sections of the ruled surfaces, regardless of the position of the jump. 
\end{proof}
\begin{remark}
Note that in the 4-dimensional case the singular fibres of the graph map are similar to those in the natural Lagrangian fibration on $X^{[2]}$ as discussed in section \ref{Douady}.    
\end{remark}

\begin{proposition}\label{6dim}
	In the case that $\Delta(2,\delta,c_2)=\frac{3}{4}$, the space of graphs $\mb{P}_{\delta,c_2}$ is a $\mb{P}^2$-bundle with base $B$, and $\mb{P}^1_{\delta,c_2}$ is isomorphic to $B\times B$. The fibres of $\mc{G}$ above points in $\mb{P}_{\delta,c_2}^0$ are 3-dimensional Prym varieties, and the fibres above spectral curves $C+\{b\}\times T^* \in \mb{P}^1_{\delta,c_2}$ are given by a union of two $\mb{P}^1$-bundles with base $T\times T$, which intersect along $|C\cap \{b\}\times T^*|$ sections.
\end{proposition}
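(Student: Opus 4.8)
The plan is to run the same argument as the proof of Proposition \ref{4dim}, replacing $\Delta(2,\delta,c_2)=\tfrac12$ by $\tfrac34$ and adjusting the bookkeeping for the extra half-unit of discriminant; the three ingredients are Proposition \ref{spectral} (for the base), Remark \ref{filter} (for the stratum $\mb{P}^1_{\delta,c_2}$), and Proposition \ref{singular} together with \cite[Theorem 4.5]{BrMoFM} (for the fibres).

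First I would identify the base. Since $\Delta(2,\delta,c_2)=\tfrac34>0$ and $4\Delta(2,\delta,c_2)=3$ is odd, Proposition \ref{spectral}(ii) gives $\mb{P}_{\delta,c_2}\cong\mb{P}(E_{\delta,c_2})$ with $E_{\delta,c_2}$ stable of rank $3$, so $\mb{P}_{\delta,c_2}$ is a $\mb{P}^2$-bundle over $B$. For the locus of graphs with a jump, Remark \ref{filter} gives $\mb{P}^1_{\delta,c_2}\cong\mb{P}^0_{\delta,c_2-1}\times\mathrm{Hilb}^1(B)=\mb{P}^0_{\delta,c_2-1}\times B$. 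Now $\Delta(2,\delta,c_2-1)=\tfrac14$, so the $\Delta=\tfrac14$ sub-case of Proposition \ref{spectral} shows $E_{\delta,c_2-1}$ is a line bundle of degree $-2$ and hence $\mb{P}_{\delta,c_2-1}\cong B$; since $\lfloor 2\cdot\tfrac14\rfloor=0$ forces $\mb{P}^0_{\delta,c_2-1}=\mb{P}_{\delta,c_2-1}$, this yields $\mb{P}^1_{\delta,c_2}\cong B\times B$ (of codimension $1$ in the $3$-fold $\mb{P}_{\delta,c_2}$, as it should be).

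Next I would compute the fibres. As $\Delta(2,\delta,c_2)=\tfrac34<t(2,\delta)$ places every relevant sheaf in the stably irreducible range, the graph map is surjective (Proposition \ref{surjective}) and every jump-free spectral curve is smooth by the smoothness result at the end of Section \ref{Spectral}. For a point of $\mb{P}^0_{\delta,c_2}$ the curve is a smooth irreducible bisection $C$ of genus $4\Delta(2,\delta,c_2)+1=4$ by \eqref{genus}, so $\mc{G}^{-1}(C)\cong\mathrm{Prym}(C/B)$ is a $3$-dimensional Prym variety. For a point $C+\{b\}\times T^*$ of $\mb{P}^1_{\delta,c_2}$, the bisection $C$ lies in $\mb{P}^0_{\delta,c_2-1}$, hence is smooth of genus $4\cdot\tfrac14+1=2$, irreducible, and $\iota_\delta$-invariant (being a summand of the $\iota_\delta$-invariant divisor $C+\{b\}\times T^*$), so Proposition \ref{singular} applies verbatim: $\mc{G}^{-1}(C+\{b\}\times T^*)$ is a union of two $\mb{P}^1$-bundles over $\mathrm{Prym}(C/B)\times T$ meeting along $|C\cap\{b\}\times T^*|$ sections. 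The last step is to identify $\mathrm{Prym}(C/B)\cong T$, using that $\mc{M}_{2,\delta,c_2-1}(X)$ is a primary Kodaira surface with fibre $T$ (Proposition \ref{2dim}) fibred over $\mb{P}_{\delta,c_2-1}\cong B$ by the graph map with fibre $\mathrm{Prym}(C/B)$, which gives the stated base $T\times T$.

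The main obstacle I anticipate is precisely this identification $\mathrm{Prym}(C/B)\cong T$: dimension alone only shows it is an elliptic curve, and pinning it to $T$ requires knowing that the graph-map fibration $\mc{M}_{2,\delta,c_2-1}(X)\to B$ agrees up to isomorphism with the canonical elliptic bundle structure of the primary Kodaira surface $\mc{M}_{2,\delta,c_2-1}(X)$, so that all its fibres are isomorphic to $T$; this is where Proposition \ref{2dim} (and the uniqueness of the elliptic fibration of a primary Kodaira surface) must be invoked with care. The remaining steps are routine bookkeeping parallel to Proposition \ref{4dim}; the one genuine difference to record is that, unlike the genus-one bisection of the $4$-dimensional case, the genus-two cover $C\to B$ is ramified at two points by Riemann--Hurwitz, so $|C\cap\{b\}\times T^*|$ equals $2$ for generic $b$ and drops to $1$ over the two branch points — which is why the statement is phrased with $|C\cap\{b\}\times T^*|$ rather than a constant.
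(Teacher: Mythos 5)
Your argument follows the paper's proof essentially verbatim: Proposition \ref{spectral} for the $\mb{P}^2$-bundle base, Remark \ref{filter} together with the $\Delta=\frac14$ case for $\mb{P}^1_{\delta,c_2}\cong B\times B$, the genus formula \eqref{genus} for the $3$-dimensional Prym fibres over $\mb{P}^0_{\delta,c_2}$, and Proposition \ref{singular} for the fibres over curves with a jump. Your explicit justification of $\mathrm{Prym}(C/B)\cong T$ via Proposition \ref{2dim}, and the Riemann--Hurwitz count of $|C\cap\{b\}\times T^*|$, correctly fill in details the paper only asserts (the latter appearing there as a remark after the proposition).
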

\begin{proof}
	This case is analogous to Proposition \ref{4dim}. Proposition \ref{spectral} gives that $\mb{P}_{\delta,c_2}$ is a $\mb{P}^2$-bundle with base $B$, and the spectral curves with a jump are parameterized by $\mb{P}_{\delta,c_2-1}\times B$. Since $\Delta(2,\delta,c_2-1)=\frac{1}{4}$, we have $\mb{P}_{\delta, c_2-1}\cong B$. The genus formula \eqref{genus} gives that spectral curves can be either a genus 4 curve $C$, or a genus 2 curve $C'$ plus a jump of length one at some $b \in B$. When $C$ is genus 4, $\text{Prym}(C/B)$ is 3-dimensional. For the fibres above graphs with a jump, Proposition \ref{singular} gives that both the non-locally free component of the fibre and the closure of the locally free component are $\mb{P}^1$-bundles with base $T\times \text{Prym}(C'/B)\cong T\times T$, and they intersect along $|C'\cap \{b\}\times T^*|$ sections. 
\end{proof}
\begin{remark}
	For the case of Proposition \ref{6dim}, since $C'\to B$ is a degree 2 map from a genus 2 curve to a genus 1 curve, the map has ramification at two points. Thus $|C'\cap \{b\}\times T^*|=1$ if $b$ is the image of a ramification point of $C'\to B$, and $|C'\cap \{b\}\times T^*|=2$ otherwise.
\end{remark}

\begin{remark}
Note that in this case all of the singular fibres of the graph map have a similar complexity, as the singular fibres correspond after allowable elementary modifications to moduli spaces of sheaves of dimensions $6-4k$, for some $k\in \Z_{>0}$, of which $2$ is the only non-negative value. This contrasts with the Douady space $X^{[3]}$, where the fibres above points of the form $3p \in \text{Sym}^3(B)$ have significantly different behaviour to the singular fibres above points of the form $2p+q \in \text{Sym}^3(B)$. This can be seen by comparing punctual Hilbert schemes of 2 and 3 points as in \cite{Briancon}.
\end{remark}

\subsection{Higher Dimensions}
In general, when $\Delta(2,\delta,c_2)>0$, the generic spectral curve is a smooth curve $C$ of genus $4\Delta(2,\delta,c_2)+1$, and the fibre above $C$ is given by $\text{Prym}(C/B)$. For a spectral curve $S=C+\sum\limits_{i=1}^k \mu_i\{b_i\}\times T^*$, the non-locally free component of $\mc{G}^{-1}(S)$ can be found by describing $\text{Quot}(E,\ell)$ for all vector bundles $E$ in $\mc{M}_{2,\delta,c_2-\ell}(X)$ with spectral curve $S_E=S-\sum\limits_{i=1}^k \nu_i \{b_i\}\times T^*$ for $\{\nu_i\}$ with $0\leq \nu_i\leq \mu_i$ and $\sum\limits_{i=1}^k \nu_i=\ell$.

The vector bundles with spectral curve $S$ can be described by parameterizing the sequences of elementary modifications taking a vector bundle with spectral curve $C$ to one with spectral curve $S$. This process is described in detail in \cite[Section 4]{MorHopf} for Hopf surfaces, and the method for Kodaira surfaces is similar.

Because of this, the fibres of the graph map above spectral curves with jumps must be computed inductively using information about moduli spaces of lower dimensions, so an understanding of the fibration structure of $\mc{M}_{2,\delta,c_2}(X)$ requires a description of the fibration structure of $\mc{M}_{2,\delta,c_2-k}(X)$ for all $0\leq k\leq 2\Delta(2,\delta,c_2)$.

\subsection{Further Questions}
In the case of a Lagrangian fibration $f:M\to P$ with both $M$ and $P$ K\"ahler, a result from \cite{SoldVerb} (due to Matsushita \cite{Matsushita} in the projective case) gives an isomorphism between $R^i\pi_*\mc{O}_M$ and $\Omega^i_P$ for integers $i$, from which the cohomology of $\mc{O}_M$ can be computed from the Hodge numbers of $P$ via the Leray spectral sequence. The proof of the above result uses the K\"ahler condition mainly to show the isomorphism away from the singular fibres of the Lagrangian fibration, so if $\mc{M}_{2,\delta,c_2}(X)$ has a K\"ahler metric away from the singular fibres of its Lagrangian fibration the above result may still hold in this case. Under these hypotheses the Leray spectral sequence would degenerate at the second page, giving $$H^i(\mc{O}_{\mc{M}_{2,\delta,c_2}(X)})=\begin{cases}\C & i=0,\\
\C^2 & 0<i<8\Delta(2,\delta,c_2),\\
\C & i=8\Delta(2,\delta,c_2).
\end{cases}$$

With respect to the fundamental group, in addition to determining whether the bounds from \cite{Arapura} extend to higher dimensions, these bounds may also be improved by studying when loops in the smooth fibres are homotopy equivalent to loops in a singular fibre. This would naturally generalize the case of elliptic surfaces with singular fibres but no multiple fibres, where the fundamental group is entirely determined by the base as all loops in smooth fibres are homotopy equivalent to loops inside the simply connected singular fibres. Proving such a result for $\mc{M}_{2,\delta,c_2}(X)$ would improve the bounds on the number of generators of the fundamental group to $$\begin{tikzcd}\Z^{8\Delta(2,\delta,c_2)-2} \arrow[r] & \pi_1(\mc{M}_{2,\delta,c_2}(X)) \arrow[r] & \pi_1(\mb{P}_{2,\delta}) \arrow[r] & 0
\end{tikzcd}$$
for $\Delta(2,\delta,c_2)\geq \frac{1}{2}$. In the $\Delta(2,\delta,c_2)=\frac{1}{2}$ case, this would imply that the fundamental group of $\mc{M}_{2,\delta,c_2}$ has at most four generators, which is exactly the number of generators for the fundamental group of $X^{[2]}$.

Another avenue of future study for this problem is to perform further comparisons of the Lagrangian fibration of $\mc{M}_{2,\delta,c_2}$ with the natural Lagrangian fibration on $X^{[n]}$. While the Lagrangian fibrations for these two families are both $\mb{P}^n$-bundles over $B$, they are never isomorphic. At this point it is unclear if this difference in base for the Lagrangian fibrations is sufficient to force the $\mc{M}_{2,\delta,c_2}(X)$ to be distinct from Douady spaces or if it can be reconciled to get spaces which are still deformation equivalent, perhaps by composing with a finite cover. This contrasts with the case of an elliptically fibred abelian surface, where for appropriately chosen invariants ($\gcd(c_1\cdot f, r)=1$ with $f$ a fibre of the elliptic fibration) every stable bundle is uniquely determined by its allowable elementary modifications up to twisting by a line bundle \cite[Chapter 8, Proposition 9]{Fried}. This fact directly gives the correspondence between between moduli spaces of stable bundles and Hilbert schemes of points in this case. One way in which we can investigate this difference in the future is to look moduli spaces of stable sheaves on a product $E_1\times E_2$ of elliptic curves and its spectral construction with respect to both natural elliptic fibrations. If the rank $r$ and first Chern class $c_1$ are chosen so that $\gcd(r,c_1\cdot f_1)=1$ and $r | (c_1\cdot f_2)$ with $f_1$ and $f_2$ general fibres of the two fibrations, the spectral constructions will immediately give the Hilbert scheme structure for the first fibration and behaviour similar to the Kodaira case for the second fibration. This question will will be analysed in a future paper.

\addcontentsline{toc}{section}{References}
	\bibliographystyle{alpha}
	\bibliography{ref}
\end{document}